\documentclass[11pt,a4paper,reqno]{amsart}

\usepackage{todonotes}
\usepackage{amsmath}
\usepackage{amsfonts}
\usepackage{mathrsfs}  
\usepackage{amssymb}
\usepackage{amsthm}
\usepackage{hyperref}
\usepackage{graphicx}
\usepackage[noadjust]{cite}
\usepackage{caption}
\usepackage{subfig}
\usepackage{dutchcal}
\usepackage{bbm}
\usepackage{tikz}
\usetikzlibrary{decorations.pathreplacing}
\usepackage[T1]{fontenc}
\usepackage[utf8]{inputenc}
\usepackage{mathtools}
\usepackage{pdfpages}

\usepackage[top=2.92cm, bottom=2.92cm, left=2.5cm, right=2.5cm, headsep=0.2in]{geometry}

\usepackage{fancyhdr}

\pagestyle{fancy}
\fancyhf{}
\fancyhead[CE]{\small\scshape Mayukh Mukherjee and Soumyajit Saha}
\fancyhead[CO]{\small\scshape Low energy Laplace  eigenfunctions}
\fancyhead[LE,RO]{\thepage}

\setlength{\headheight}{13pt}

\DeclareMathOperator{\dist}{dist}

\theoremstyle{plain}
\newtheorem{theorem}{Theorem}[section]
\newtheorem{defi}[theorem]{Definition}
\newtheorem{corollary}[theorem]{Corollary}
\newtheorem{lemma}[theorem]{Lemma}

\newtheorem{remark}[theorem]{Remark}
\newtheorem{question}[theorem]{Question}
\newtheorem{claim}[theorem]{Claim}
\newtheorem{proposition}[theorem]{Proposition}

\newtheorem{conj}[theorem]{Conjecture}

\newtheorem{que}[theorem]{Question}

\newcommand{\RR} {\mathbb R}

\newcommand{\CC} {\mathbb C}

\newcommand{\EE} {\mathbb E}
\newcommand{\ZZ} {\mathbb Z}
\newcommand{\NN} {\mathbb N}

\newcommand{\NNN}{\mathcal{N}}

\newcommand{\pa} {\partial}
\newcommand{\Cal} {\mathcal}

\newcommand{\beq} {\begin{equation}}
\newcommand{\eeq} {\end{equation}}

\newcommand{\diam}{\operatorname{diam}}
\newcommand{\inrad}{\operatorname{inrad}}
\newcommand{\capacity}{\operatorname{cap}}

\renewcommand{\Im} {\operatorname{Im}}

\numberwithin{equation}{section}

\newcommand{\MM}[1]{{\color{teal} {\bf (MM: #1)}}}

\begin{document}
\title{
On the effects of small perturbation on low energy Laplace eigenfunctions}
\author{Mayukh Mukherjee}
\address{Indian Institute of Technology Bombay, Powai, Maharashtra 400076, India}
\email{mathmukherjee@gmail.com, mukherjee@mpim-bonn.mpg.de}
\author{Soumyajit Saha}
\address{Iowa State University, Ames, Iowa 50011, USA}
\email{ssaha1@iastate.edu}

\maketitle

\begin{abstract}
   We 
   investigate several aspects of the nodal geometry and topology of Laplace eigenfunctions, 
   with particular emphasis on the low frequency regime. This includes 
   investigations in and around the 
  Payne property, 
   opening angle estimates of nodal domains, saturation of (fundamental) spectral gaps etc., and behaviour of all of the above under small scale perturbations. We aim to highlight interesting aspects of spectral theory and nodal phenomena tied to ground state/low energy eigenfunctions, as opposed to asymptotic results. 
\end{abstract}

\tableofcontents

\section{Introduction and preliminaries}
Let $(M, g)$ be a compact Riemannian manifold. Consider the eigenequation
\begin{equation}\label{eqtn: Eigenfunction equation}
    -\Delta \varphi = \lambda \varphi, 
\end{equation}
where $\Delta$ is the Laplace-Beltrami operator given by (using the Einstein summation convention) 
$$
\Delta f = \frac{1}{\sqrt{|g|}}\pa_i \left( \sqrt{|g|}g^{ij}\pa_j f\right),
$$
where $|g|$ is the determinant of the metric tensor $g_{ij}$. In the Euclidean space, this reduces to the usual $\Delta = \pa_1^2 + \dots + \pa_n^2$. Observe that we are using the analyst's sign convention for the Laplacian, namely that $-\Delta$ is positive semidefinite.

If $M$ has a boundary, we will consider either the Dirichlet boundary condition
\begin{equation}\label{eqtn: Dirichlet condition}
    \varphi(x)=0, \hspace{5pt} x\in \pa M,
\end{equation}
or the Neumann boundary condition 
\begin{equation}\label{eqtn:Neumann_condition}
    \pa_\eta\varphi(x)=0, \hspace{5pt} x\in \pa M,
\end{equation}
where $\eta$ denotes the outward pointing unit normal on $\pa M$.
Recall that if $M$ has a reasonably regular boundary, $-\Delta_g$ has a discrete spectrum 
$$ 
0\leq\lambda_1 \leq \dots \leq \lambda_k \leq \dots \nearrow \infty,
$$
repeated with multiplicity with corresponding (real-valued $L^2$ normalized) eigenfunctions $\varphi_k$. Also, let  $\mathcal{N}_{\varphi_{\lambda}} = \{ x \in M: \varphi_{\lambda} (x) = 0\}$ denote the nodal set of the eigenfunction $\varphi_{\lambda}$. Sometimes for ease of notation, we will also denote the nodal set by $\NNN(\varphi_\lambda)$. Recall that any connected component of $M \setminus \mathcal{N}_{\varphi_{\lambda}}$ is known as a nodal domain of the eigenfunction $\varphi_{\lambda}$ denoted by $\Omega_{\lambda}$. These are domains where the eigenfunction is not sign-changing (this follows from the maximum principle). 

In this paper, we study the nodal topology and geometry associated to Laplace eigenfunctions, for both curved and flat domains, and with both Dirichlet and/or Neumann boundary conditions. Most of our results are related to the low frequency regime (low energy eigenfunctions), though some will also apply to the high frequency asymptotics, but no result is purely asymptotic in nature.

\subsection{Notational convention} When two quantities $X$ and $Y$ satisfy $X \leq c_1 Y$ ($X \geq c_2Y$) for constants $c_1, c_2$ dependent on the geometry $(M, g)$, we write $X \lesssim_{(M, g)} Y$ (respectively $X \gtrsim_{(M, g)} Y$). 
	Unless otherwise mentioned, 
	these constants will in particular be independent of eigenvalues $\lambda$. Throughout the text, the quantity $\frac{1}{\sqrt{\lambda}}$ is referred to as the wavelength and any quantity (e.g. distance) is said to be of sub-wavelength (super-wavelength) order if it is $\lesssim_{(M, g)} \frac{1}{\sqrt{\lambda}}$ (respectively $\gtrsim_{(M, g)} \frac{1}{\sqrt{\lambda}}$). 

First we recall a few basic facts and results that we would need in the sequel.
\subsection{Characterisation of eigenvalues}
We note that the Sobolev space $H^1(M)$ can be defined as the completion of $C^\infty(M)$ with respect to the inner product 
\begin{equation}
    \langle f, g \rangle_{H^1} := \langle f, g \rangle_{L^2(M)} +  \langle \nabla f, \nabla g \rangle_{L^2(M)} 
\end{equation}
where $f, g\in C^\infty(M)$. Next, we introduce a bilinear form in $H^1(M)$. Consider the bilinear form on $C^\infty(M)\times C^\infty(M)$,
\begin{equation*}
    D(f, g):= \langle \nabla f, \nabla g \rangle_{L^2(M)}.
\end{equation*}
Since $H^1(M)$ is the completion of $C^\infty(M)$ in the induced norm, given $f, g \in H^1(M)$, there exists sequence $\{f_i\}, \{g_i\} \in C^\infty(M)$ converging to $f, g$ in $H^1$ norm. Then we can define the bilinear form on $H^1(M)\times H^1(M)$ as
\begin{equation}
    D(f, g) =  \lim_{i\to \infty} \langle \nabla f_i, \nabla g_i \rangle_{L^2(M)}.
\end{equation}
Now we have the following characterisation of Laplacian eigenvalues.
\begin{theorem}\label{thm: Rayleigh chracterization}
For any $k\in \NN$, let $\{\varphi_1, \varphi_2, \cdots, \varphi_{k-1}\}$ be the first $k-1$ orthonormal eigenfunctions. Then for any $ f\in H^1(M),~ f\neq 0$ such that 
$$\langle f, \varphi_1 \rangle=\cdots = \langle f, \varphi_{k-1} \rangle=0$$  
we have
\begin{equation}\label{eqtn: Rayleigh chracterization}
    \lambda_k\leq   \frac{D(f, f)}{\| f \|^2}.
\end{equation}
Moreover, the equality holds if and only if $f$ is an eigenfunction corresponding to $\lambda_k$.
\end{theorem}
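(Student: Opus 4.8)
The plan is to run the classical variational argument built on the spectral decomposition of $-\Delta_g$. Since $M$ (equipped with the relevant Dirichlet or Neumann boundary condition) is compact with sufficiently regular boundary, the eigenfunctions $\{\varphi_j\}_{j\geq 1}$ form a complete orthonormal system in $L^2(M)$; I would take this as the background input. So I start by expanding an arbitrary admissible $f\in H^1(M)$ as $f=\sum_{j\geq 1}a_j\varphi_j$ with $a_j=\langle f,\varphi_j\rangle_{L^2}$, the series converging in $L^2$. The orthogonality hypotheses $\langle f,\varphi_1\rangle=\cdots=\langle f,\varphi_{k-1}\rangle=0$ then say exactly that $a_1=\cdots=a_{k-1}=0$, so in fact $f=\sum_{j\geq k}a_j\varphi_j$, and $\|f\|_{L^2}^2=\sum_{j\geq k}a_j^2$ by Parseval.

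The key technical step, which I expect to be the main obstacle, is to upgrade this expansion to the $H^1$ topology and to establish the Dirichlet-energy identity
$$ D(f,f)=\sum_{j\geq k}\lambda_j a_j^2. $$
For this I would first prove, for $f\in C^\infty(M)$ and then by density for $f\in H^1(M)$, the identity $D(f,\varphi_j)=\lambda_j a_j$: this is integration by parts, using $-\Delta\varphi_j=\lambda_j\varphi_j$ together with the Dirichlet or Neumann boundary condition to annihilate the boundary term (and elliptic regularity to guarantee $\varphi_j\in C^\infty$). Writing $S_N:=\sum_{j\leq N}a_j\varphi_j$, a Cauchy--Schwarz argument applied to $\langle f,S_N\rangle_{H^1}=\sum_{j\leq N}(1+\lambda_j)a_j^2=\|S_N\|_{H^1}^2$ forces $\sum_{j}(1+\lambda_j)a_j^2\leq\|f\|_{H^1}^2<\infty$; hence $\{S_N\}$ is Cauchy in $H^1$, its $H^1$-limit must coincide with $f$ (it already converges to $f$ in $L^2$), and passing to the limit in $D(S_N,S_N)=\sum_{j\leq N}\lambda_j a_j^2$ yields the stated formula for $D(f,f)$.

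Granting this package, the inequality is immediate: since $\lambda_j\geq\lambda_k$ for every $j\geq k$,
$$ D(f,f)=\sum_{j\geq k}\lambda_j a_j^2\;\geq\;\lambda_k\sum_{j\geq k}a_j^2=\lambda_k\|f\|^2, $$
and $f\neq 0$ gives $\|f\|^2>0$, so division is legitimate and \eqref{eqtn: Rayleigh chracterization} follows. For the equality case, $D(f,f)=\lambda_k\|f\|^2$ rearranges to $\sum_{j\geq k}(\lambda_j-\lambda_k)a_j^2=0$, so $a_j=0$ whenever $\lambda_j>\lambda_k$; thus $f$ lies in the finite-dimensional $\lambda_k$-eigenspace, i.e. $f$ is an eigenfunction with eigenvalue $\lambda_k$. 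Conversely, if $-\Delta f=\lambda_k f$, then integration by parts gives $D(f,f)=\langle -\Delta f,f\rangle=\lambda_k\|f\|^2$ directly, so equality holds. This completes the argument; the only genuinely nontrivial ingredient is the completeness and $H^1$-convergence of the eigenfunction expansion, which is furnished by the standard spectral theory of the Laplacian on a compact manifold.
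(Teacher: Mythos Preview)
Your argument is correct and is the standard variational proof via the spectral expansion; the paper itself does not supply a proof of this theorem, stating it only as a background characterisation of the Laplace eigenvalues, so there is nothing to compare against beyond noting that your approach is the classical one.
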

Note that for characterising the Dirichlet-Laplacian eigenvalues, the admissible function class in the above theorem is $H^1_0(M)$, which is the completion of $C^\infty_0(M)$ with respect to the induced norm from the above defined inner product. But, for Neumann boundary condition and manifolds without boundary, we use the above characterisation as it is.

Moreover, we also note that, in case of a boundaryless manifold or a manifold with Neumann boundary condition  we have that $\lambda_1=0$ and for a manifold with Dirichlet boundary, we have that $\lambda_1>0$. This follows from the above characterisation.

\subsection{Eigenfunctions and Fourier synthesis} In a certain sense, the study of eigenfunctions of the Laplace-Beltrami operator is the analogue of Fourier analysis in the setting of compact Riemannian manifolds. Recall that the Laplace eigenequation is the standing state for a variety of partial differential equations modelling physical phenomena like 
heat diffusion, wave propagation or Schr\"{o}dinger problems. 
Below, we note down this well-known method of ``Fourier synthesis'':
    \begin{eqnarray}
    \text{Heat equation} & (\pa_t - \Delta)u = 0 & u(t, x) = e^{-\lambda t}\varphi(x)\\
    \text{Wave equation } & (\pa^2_t - \Delta)u = 0 & u(t, x) = e^{i\sqrt{\lambda} t}\varphi(x)\\
    \text{Schr\"{o}dinger equation} & (i\pa_t - \Delta)u = 0 & u(t, x) = e^{i\lambda t}\varphi(x)
    \end{eqnarray}
Further, note that $u(t, x) = e^{\sqrt{\lambda }t}\varphi(x)$ solves the harmonic equation $(\pa^2_t + \Delta)u = 0$ on $\RR \times M$. In the interest of completeness, we include one last useful heuristic: if one considers the eigenequation (\ref{eqtn: Eigenfunction equation}) on metric balls of radius $\frac{\sqrt{\epsilon}}{\sqrt{\lambda}}$ and rescale to a ball of radius $1$, it produces an ``almost harmonic'' function (see Section $2$ of \cite{Ma} for more details). 

A motivational perspective of the study of Laplace eigenfunctions then comes from quantum mechanics (via the correspondence with Schr\"{o}dinger operators), where the $ L^2 $-normalized eigenfunctions induce a probability density $ \varphi^2(x) dx $, i.e., the probability density of a particle of energy $ \lambda $ to be at $ x \in M $.
Another physical (real-life) motivation of studying the eigenfunctions, dated back to the late 18th century, is based on the acoustics experiments done by E. Chladni which were in turn inspired from the observations of R. Hooke in the late 17th century. But what is surprising (at least to the present authors!) is that the earliest observation of these vibration patterns were made by G. Galileo in early 17th century. We quote below, from {\em Dialogues concerning two new sciences}, his observation about the vibration patterns on a brass plate.

{\em
``As I was scraping a brass plate with a sharp iron chisel in order to remove some spots from it and was running the chisel rather rapidly over it, I once or twice, during many strokes, heard the plate emit a rather strong and clear whistling sound; on looking at the plate more carefully, I noticed a long row of fine streaks parallel and equidistant from one another. Scraping with the chisel over and over again, I noticed that it was only when the plate emitted this hissing noise that any marks were left upon it; when the scraping was not accompanied by this sibilant note there was not the least trace of such marks. I noted also that the marks made when the tones were higher were closer together; but when the tones were deeper, they were farther apart.'' 
}

Of similar essence, the experiments of Chladni consist of drawing a bow over a piece of metal plate whose surface is lightly covered with sand. When resonating, the plate is divided into regions that vibrate in opposite directions causing the sand to accumulate on parts with no vibration. 

The study of the patterns formed by these sand particles (Chladni figures) were of great interest which led to the study of nodal sets and nodal domains. Below we discuss few properties of the Laplace eigenfunctions.

\subsection{Elementary facts about eigenvalues and eigenfunctions} We now collect some elementary facts about the eigenvalues and eigenfunctions of the Laplace-Beltrami operator. 
One well known global property of the eigenfunctions is the following theorem which gives an upper bound on the number of nodal domains corresponding to the $k$th eigenfunction $\varphi_k$.
\begin{theorem}[Courant's nodal domain theorem]
The number of nodal domains of $\varphi_k$ can be at most $k$. In other words, the total number of connected components of $M\setminus \NNN_{\varphi_k}$ is strictly less than $k+1$. 
\end{theorem}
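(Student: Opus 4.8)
The plan is to argue by contradiction using the Rayleigh characterisation of $\lambda_k$ (Theorem \ref{thm: Rayleigh chracterization}) together with the unique continuation principle for eigenfunctions. Suppose $\varphi_k$ has nodal domains $\Omega_1, \dots, \Omega_m$ with $m \geq k+1$. For $1 \le i \le k$ define $\psi_i$ to equal $\varphi_k$ on $\Omega_i$ and $0$ on $M \setminus \Omega_i$. Each $\psi_i$ lies in $H^1(M)$ (indeed in $H^1_0(M)$ in the Dirichlet case), the $\psi_i$ have pairwise disjoint supports, and they are nonzero, hence linearly independent; moreover any linear combination of them vanishes identically on the open nonempty set $\Omega_{k+1}$.

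First I would produce the test function. Imposing orthogonality to $\varphi_1, \dots, \varphi_{k-1}$ amounts to $k-1$ homogeneous linear conditions on the $k$ coefficients $c_1, \dots, c_k$, so there is a nontrivial choice of $(c_1, \dots, c_k)$ for which $\psi := \sum_{i=1}^k c_i \psi_i$ satisfies $\langle \psi, \varphi_j \rangle = 0$ for $j = 1, \dots, k-1$. Note $\psi \neq 0$, so $\psi$ is an admissible function in Theorem \ref{thm: Rayleigh chracterization}.

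Next I would compute the Rayleigh quotient of $\psi$. Because the supports are pairwise disjoint, $D(\psi, \psi) = \sum_i c_i^2\, D(\psi_i, \psi_i)$ and $\|\psi\|^2 = \sum_i c_i^2 \|\psi_i\|^2$. On each $\Omega_i$ we have $-\Delta \varphi_k = \lambda_k \varphi_k$, with $\varphi_k = 0$ on the part of $\partial \Omega_i$ lying in the interior of $M$ (the nodal set) and the relevant boundary condition holding on $\partial\Omega_i \cap \partial M$; Green's identity therefore gives $D(\psi_i, \psi_i) = \int_{\Omega_i} |\nabla \varphi_k|^2 = \lambda_k \int_{\Omega_i} \varphi_k^2 = \lambda_k \|\psi_i\|^2$, with vanishing boundary contribution in all cases. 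Summing over $i$ yields $D(\psi, \psi)/\|\psi\|^2 = \lambda_k$, so by the equality clause of Theorem \ref{thm: Rayleigh chracterization}, $\psi$ must be an eigenfunction of $-\Delta$ with eigenvalue $\lambda_k$. But $\psi$ vanishes on the open set $\Omega_{k+1}$, so by Aronszajn's unique continuation theorem (on a connected manifold an eigenfunction vanishing on a nonempty open set is identically zero) we get $\psi \equiv 0$, contradicting $\psi \neq 0$. Hence $m \le k$.

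The points requiring care are: (i) the regularity behind $\psi_i \in H^1$ and the integration by parts on $\Omega_i$ — this rests on interior smoothness of $\varphi_k$ and on the nodal set having measure zero, and is handled cleanly by recognising $\psi_i$ as the $H^1_0(\Omega_i)$-limit of truncations of $\varphi_k$, so one never has to face the singular part of the nodal set directly; and (ii) the invocation of unique continuation, which is the step that genuinely uses ellipticity rather than soft functional analysis. I expect (i) to be the most delicate item to phrase rigorously, though it is entirely classical; everything else is linear algebra and the Rayleigh quotient computation.
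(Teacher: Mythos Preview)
Your proof is correct and follows essentially the same approach as the paper's (commented-out) proof: restrict $\varphi_k$ to each nodal domain, use linear algebra to build a test function orthogonal to $\varphi_1,\dots,\varphi_{k-1}$, compute that its Rayleigh quotient equals $\lambda_k$, invoke the equality case of Theorem~\ref{thm: Rayleigh chracterization}, and conclude via unique continuation. Your version is arguably a bit cleaner in the Rayleigh quotient computation, exploiting disjoint supports directly rather than going through an integration-by-parts chain.
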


\begin{remark}
$\varphi_1$ is always non sign changing. 
In the case of Laplace eigenfunctions, this can be easily observed by replacing $\varphi_1$ by $|\varphi_1|$, which is non-negative and using the variational characterisation (\ref{eqtn: Rayleigh chracterization}) above. 
\end{remark}
\begin{remark}
The multiplicity of $\lambda_1$ is always 1 i.e. $\lambda_1$ is simple (for a manifold without
boundary, $\lambda_1 = 0$ corresponding to the constant eigenfunctions). If not, then $\varphi_2$ has a constant sign, from the previous remark. This contradicts the fact that $\langle \varphi_1, \varphi_2 \rangle=0$, since $\varphi_1$ has a constant sign as well. As a result, $\lambda_1$ is characterized as being the only eigenvalue with eigenfunction of constant sign. For significantly more general operators (like Schr\"{o}dinger operators), the result is still true, but this requires the use of the Krein-Rutman theorem.
\end{remark}

\begin{remark}
The above two remarks in conjunction with the Courant nodal domain theorem imply that $\varphi_2$ has exactly two nodal domains. Moreover, any $\varphi_k$ has at least two nodal domains for $k\geq 2$. 
\end{remark}

\begin{theorem}[Domain Monotonicity]
Suppose $\Omega_1 \subseteq \Omega_2 \subseteq M$. Then their fundamental Dirichlet eigenvalues satisfy
\beq
\lambda_1(\Omega_2) \leq \lambda_1(\Omega_1), 
\eeq
and the above inequality is strict if the set $\Omega_2 \setminus \Omega_1$ has positive capacity.
\end{theorem}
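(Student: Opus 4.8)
The plan is to combine the variational characterisation of $\lambda_1$ from Theorem~\ref{thm: Rayleigh chracterization} with the ``extension by zero'' map. Let $\varphi$ be the $L^2$-normalised ground state Dirichlet eigenfunction of $\Omega_1$, and let $\tilde{\varphi}$ denote its extension to $\Omega_2$ by setting it equal to $0$ on $\Omega_2 \setminus \Omega_1$. Since $\Omega_1 \subseteq \Omega_2$, extension by zero sends $C_c^\infty(\Omega_1)$ isometrically into $C_c^\infty(\Omega_2) \subseteq H^1_0(\Omega_2)$, and passing to $H^1$-closures shows $\tilde{\varphi} \in H^1_0(\Omega_2)$ with $\|\tilde{\varphi}\|_{L^2(\Omega_2)} = 1$ and $D(\tilde{\varphi},\tilde{\varphi}) = D(\varphi,\varphi) = \lambda_1(\Omega_1)$ (no boundary regularity of $\Omega_1$ is needed here). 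Applying Theorem~\ref{thm: Rayleigh chracterization} on $\Omega_2$ with $k=1$, so that the orthogonality hypotheses are vacuous, yields
\beq
\lambda_1(\Omega_2) \le \frac{D(\tilde{\varphi},\tilde{\varphi})}{\|\tilde{\varphi}\|^2} = \lambda_1(\Omega_1),
\eeq
which is the asserted monotonicity.

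For the strict inequality, suppose $\lambda_1(\Omega_2) = \lambda_1(\Omega_1)$. Then $\tilde{\varphi}$ attains equality in \eqref{eqtn: Rayleigh chracterization} on $\Omega_2$, so by the equality clause of Theorem~\ref{thm: Rayleigh chracterization} it is a Dirichlet eigenfunction of $\Omega_2$ for $\lambda_1(\Omega_2)$; by interior elliptic regularity $\tilde{\varphi}$ is smooth in $\Omega_2$. Since the ground state of $\Omega_1$ can be taken non-negative, $\tilde{\varphi} \ge 0$ and $\tilde{\varphi} \not\equiv 0$, whence the strong maximum principle (equivalently, Harnack's inequality for non-negative solutions of $-\Delta u = \lambda_1(\Omega_2) u$) forces $\tilde{\varphi} > 0$ everywhere on the connected open set $\Omega_2$. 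But $\tilde{\varphi} \equiv 0$ on $\Omega_2 \setminus \Omega_1$: if $\Omega_2 \setminus \overline{\Omega_1}$ has nonempty interior this is an immediate contradiction, and in general one invokes the potential-theoretic description of $H^1_0(\Omega_1)$ as the set of $u \in H^1_0(\Omega_2)$ whose quasi-continuous representative vanishes quasi-everywhere on $\Omega_2 \setminus \Omega_1$. Inside $\Omega_2$ the quasi-continuous representative of $\tilde{\varphi}$ is its smooth, strictly positive representative; hence its vanishing quasi-everywhere on $\Omega_2 \setminus \Omega_1$ would force $\capacity(\Omega_2 \setminus \Omega_1) = 0$, contrary to hypothesis. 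Therefore $\lambda_1(\Omega_2) < \lambda_1(\Omega_1)$.

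I expect the capacity refinement to be the only real obstacle: the ``soft'' ingredients (boundedness of extension by zero, the equality case of the Rayleigh characterisation, Harnack positivity of a non-negative eigenfunction) are routine, and the purely topological version of strictness — assuming $\Omega_2\setminus\Omega_1$ contains a ball — follows at once from them. Upgrading to arbitrary sets of positive capacity requires the fine theory of Sobolev spaces, namely the quasi-continuous characterisation of $H^1_0$ and the fact that a function which is (continuous and) strictly positive on a set of positive capacity cannot vanish quasi-everywhere on it. Since the present paper uses the hypothesis only to guarantee $H^1_0(\Omega_1) \subsetneq H^1_0(\Omega_2)$, one may alternatively phrase the argument directly in terms of this strict inclusion, but the potential-theoretic input is unavoidable if the capacity formulation is to be retained verbatim.
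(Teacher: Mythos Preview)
The paper states this theorem as a standard preliminary fact and does not supply a proof, so there is nothing to compare against. Your argument is the standard one and is correct: extension by zero places the ground state of $\Omega_1$ in $H^1_0(\Omega_2)$, Theorem~\ref{thm: Rayleigh chracterization} then gives the inequality, and in the equality case the extended function becomes a bona fide first eigenfunction on $\Omega_2$, hence smooth and strictly positive by Harnack, which is incompatible with membership in $H^1_0(\Omega_1)$ once $\capacity(\Omega_2\setminus\Omega_1)>0$ via the quasi-continuous characterisation of $H^1_0$ (as in \cite{AH}, already cited in the paper). Your closing paragraph is accurate: the capacity formulation is exactly the hypothesis needed to force $H^1_0(\Omega_1)\subsetneq H^1_0(\Omega_2)$, and the fine potential-theoretic input cannot be avoided at that level of generality.
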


\begin{theorem}[Wavelength density]
For any $(M, g)$, there exists a constant $C>0$ (depending on $g$) such that every ball of radius bigger that $C/\sqrt{\lambda}$ intersects with the nodal set  corresponding to $\varphi_\lambda$. 
\end{theorem}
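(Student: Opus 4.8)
The plan is to argue by contraposition: I will show that any metric ball on which $\varphi_\lambda$ does not vanish must be small. Suppose $B:=B(x,r)$ is a metric ball with $\varphi_\lambda\neq 0$ everywhere on $B$; replacing $\varphi_\lambda$ by $-\varphi_\lambda$ if necessary, assume $\varphi_\lambda>0$ on $B$. A metric ball in a connected manifold is connected (each point is joined to the centre by a minimizing geodesic contained in the ball), so $B$ is a connected subset of $M\setminus\mathcal{N}_{\varphi_\lambda}$ and therefore lies inside a single nodal domain $\Omega$. Assume for the moment that $\overline\Omega$ does not meet $\partial M$ (in particular this covers $\partial M=\emptyset$). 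Then $\varphi_\lambda|_\Omega$ is a \emph{positive} Dirichlet eigenfunction of $\Omega$ with eigenvalue $\lambda$, hence — being of constant sign — the ground state, so $\lambda_1(\Omega)=\lambda$. By the domain monotonicity theorem above, $B\subseteq\Omega$ gives
\[
\lambda \;=\; \lambda_1(\Omega)\;\le\;\lambda_1\big(B(x,r)\big).
\]

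Next I would bound $\lambda_1(B(x,r))$ from above by a multiple of $r^{-2}$. Use the Lipschitz test function $f(y)=\big(1-\dist(x,y)/r\big)_+$, which is supported in $\overline{B(x,r)}$, vanishes on the sphere $\{\dist(x,\cdot)=r\}$, and satisfies $|\nabla f|=1/r$ almost everywhere on $B(x,r)$ because $\dist(x,\cdot)$ is $1$-Lipschitz. Feeding $f$ into the variational characterization (Theorem~\ref{thm: Rayleigh chracterization} with $k=1$) and keeping only the part of the $L^2$ mass carried by $B(x,r/2)$, where $f\ge 1/2$, gives
\[
\lambda_1\big(B(x,r)\big)\;\le\;\frac{D(f,f)}{\|f\|_{L^2}^2}\;\le\;\frac{r^{-2}\,\Vol(B(x,r))}{\tfrac14\,\Vol(B(x,r/2))}\;=\;\frac{4}{r^2}\cdot\frac{\Vol(B(x,r))}{\Vol(B(x,r/2))}.
\]
It remains to control the volume ratio uniformly in $x$ and $r$. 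Since $M$ is compact, $\Ric_g\ge -(n-1)K$ for some $K\ge 0$, so Bishop--Gromov yields $\Vol(B(x,r))/\Vol(B(x,r/2))\le V_{-K}(r)/V_{-K}(r/2)$, with $V_{-K}(t)$ the volume of a $t$-ball in the space form of curvature $-K$; this quantity is bounded by a constant $A=A(n,K,\diam M)$ for all $r\le\diam M$. Hence $\lambda_1(B(x,r))\le 4A/r^2$ once $r$ is below a fixed scale. For $r$ between that scale and $\diam M$, applying domain monotonicity to an inclusion $B(x,r_0)\subseteq B(x,r)$ of a fixed small ball, together with $r\le\diam M$, upgrades this to $\lambda_1(B(x,r))\le C'(M,g)\,r^{-2}$ for all such $r$; and if $r>\diam M$ then $B(x,r)=M$, which meets $\mathcal{N}_{\varphi_\lambda}$ as soon as $\lambda>0$ (for $\lambda=0$ the eigenfunction is constant and the statement is vacuous). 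Combining with the first display, $\varphi_\lambda>0$ on $B(x,r)$ forces $\lambda\le C'(M,g)\,r^{-2}$, i.e. $r\le \sqrt{C'(M,g)}/\sqrt{\lambda}$, so $C:=\sqrt{C'(M,g)}$ works.

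The soft ingredients here — connectedness of balls, the ground-state characterization, domain monotonicity, and the Rayleigh quotient estimate — are all immediate from what has been recalled above, so the main obstacle is making the bound $\lambda_1(B(x,r))\lesssim_{(M,g)}r^{-2}$ genuinely uniform: over all centres $x$, all radii $r\le\diam M$, and especially for balls abutting $\partial M$. In that last case one should, throughout, replace ``first Dirichlet eigenvalue'' by the ground-state eigenvalue of the mixed problem with Dirichlet data on the interior part of the boundary and the ambient condition on $\partial M$ — for which $\varphi_\lambda|_\Omega$ is still the ground state, for which the same monotonicity under inclusion holds (extension by zero across the interior boundary stays admissible), and for which $f$ above is still a legitimate test function — and replace the Bishop--Gromov comparison by the elementary two-sided estimate $c_0 s^n\le \Vol(B(x,s))\le C_0 s^n$ valid for $s$ below a fixed scale, which follows from smoothness and compactness. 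Verifying these points is the only real work.
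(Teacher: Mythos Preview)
Your argument is correct and follows the same skeleton as the paper's (suppressed) proof: both proceed by contraposition, observe that a ball disjoint from the nodal set sits inside a single nodal domain on which $\varphi_\lambda$ is the Dirichlet ground state, apply domain monotonicity to get $\lambda\le\lambda_1(B(x,r))$, and finish by showing $\lambda_1(B(x,r))\lesssim_{(M,g)} r^{-2}$.

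The only genuine difference is in how that last eigenvalue bound is obtained. The paper works in normal coordinates: it shrinks to a Euclidean ball $B_{ar}(x;g_e)\subset B_r(x;g)$ with $ar$ below the injectivity radius, uses comparability of $g$ and $g_e$ to transfer Rayleigh quotients, and then quotes the scaling $\lambda_1(B^{\mathrm{Euc}}_{ar})=C^*/(ar)^2$. You instead plug in the explicit test function $f=(1-\dist(x,\cdot)/r)_+$ and control the resulting volume ratio $\Vol(B(x,r))/\Vol(B(x,r/2))$ via Bishop--Gromov. Your route is slightly more self-contained (no injectivity-radius bookkeeping, uniform constants come directly from the lower Ricci bound), and you also flag the mixed-boundary variant needed when $\overline{\Omega}$ meets $\pa M$, which the paper leaves implicit. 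One small point: the justification ``$|\nabla f|=1/r$ a.e.\ because $\dist(x,\cdot)$ is $1$-Lipschitz'' only gives $|\nabla f|\le 1/r$; equality a.e.\ uses that the cut locus has measure zero, which you should state.
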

Note that the above theorem tells us that the nodal domains cannot be too ``fat'' which leads to a natural question:  How ``fat'' or ``thin'' can a nodal domain be?\footnote{Admittedly, the question becomes more interesting when we investigate this question for higher (asymptotic) eigenfunctions.} One way of interpreting the ``thickness'' of a domain is to measure the size of the largest ball that can be inscribed inside that domain. The inner radius estimates of nodal domains have been studied for years with fascinating results. 
We state the following result which is the culmination of several works:
\begin{theorem}[\cite{H, Cr, O, Ta, Ma, Ma1, Lie, GM}]
Let $(M, g)$ be a compact Riemannian manifold of dimension $n$, and let $\Omega_\lambda$ be a nodal domain for $\varphi_\lambda$. If $n = 2$, then we have that 
\beq
\inrad \left(\Omega_\lambda\right) \sim_{(M, g)} \frac{1}{\sqrt{\lambda}}.
\eeq
If $n \geq 3$, then 
one has
\begin{equation}
    \frac{1}{\lambda^{\frac{1}{2n}+\frac{n-1}{4}}}\lesssim_{(M, g)} \inrad(\Omega_\lambda) \lesssim_{(M, g)} \frac{1}{\sqrt{\lambda}}.
\end{equation}
Furthermore, let $x_0$ be a maximum point of $\varphi_\lambda$ on $\Omega_\lambda$. Then the ball of inner radius can be centered at $x_0$. 
\end{theorem}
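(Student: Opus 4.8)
\emph{Sketch of the argument.} We would prove the two inequalities separately. For the \emph{upper bound} we simply repeat the proof of the Wavelength density theorem: if a geodesic ball $B_r\subseteq\Omega_\lambda$, then Domain Monotonicity gives $\lambda=\lambda_1(\Omega_\lambda)\le\lambda_1(B_r)$, while comparing $g$ with the Euclidean metric in normal coordinates (valid once $r$ is below the injectivity radius of $(M,g)$) together with the explicit bottom Dirichlet eigenvalue of a Euclidean ball gives $\lambda_1(B_r)\le C_{(M,g)}/r^2$; hence $r\lesssim_{(M,g)}1/\sqrt\lambda$. This applies to every inscribed ball, in particular to one centred at a maximum point of $|\varphi_\lambda|$ on $\Omega_\lambda$, which is consistent with the centring assertion.

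For the \emph{lower bound}, fix $x_0\in\Omega_\lambda$ at which $|\varphi_\lambda|$ is maximal over $\overline{\Omega_\lambda}$; flipping the sign of $\varphi_\lambda$ and rescaling, assume $\varphi_\lambda>0$ on $\Omega_\lambda$ and $\varphi_\lambda(x_0)=1$, and set $\rho:=\dist(x_0,\NNN_{\varphi_\lambda})$, so that $B_\rho(x_0)\subseteq\Omega_\lambda$ and the goal is a lower bound on $\rho$. The workhorse is the quantitative ``almost harmonic'' heuristic recalled above: rescaling a geodesic ball about $x_0$ of radius comparable to a small multiple of the wavelength $1/\sqrt\lambda$ to the unit ball turns $\varphi_\lambda$ into a solution of a small elliptic perturbation of $\Delta u=0$. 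From this one extracts (i) an interior gradient estimate $\sup_{B_{r/2}(x_0)}|\nabla\varphi_\lambda|\lesssim_{(M,g)}\sqrt\lambda\sup_{B_r(x_0)}|\varphi_\lambda|$, and (ii), via the three-balls / frequency-function inequalities of Garofalo--Lin and Donnelly--Fefferman, a bound on the local doubling index $N(x_0,r):=\log_2\big(\sup_{B_{2r}(x_0)}|\varphi_\lambda|/\sup_{B_{r}(x_0)}|\varphi_\lambda|\big)\lesssim_{(M,g)}1+\sqrt\lambda\,r$. Items (i)--(ii) are the quantitative replacements for the classical maximum principle that drive everything else; in particular (ii) says that at wavelength scale the nodal set near $x_0$ is as tame as the zero set of a harmonic polynomial of bounded degree.

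When $n=2$ the estimate is sharp, and we would split into cases. If $\Omega_\lambda$ is simply connected, Hayman's inequality (for simply connected planar domains, $\inrad(\Omega)\gtrsim\lambda_1(\Omega)^{-1/2}$) applies directly with $\lambda_1(\Omega_\lambda)=\lambda$. For a general, possibly multiply connected nodal domain we would follow Mangoubi and first establish a \emph{local asymmetry estimate}, $\Vol(\Omega_\lambda\cap B_r(x_0))\gtrsim_{(M,g)}\Vol(B_r(x_0))$ for $r$ up to a fixed multiple of $1/\sqrt\lambda$: a complementary ``hole'' of $\Omega_\lambda$ met by $B_r(x_0)$ must carry part of the nodal set on which $\varphi_\lambda$ vanishes, the bounded gradient and doubling of (i)--(ii) force $\varphi_\lambda$ to be small on a definite portion of the hole, Faber--Krahn applied to the hole caps its size, and summing over dyadic scales bounds the total measure removed. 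Then we would upgrade this density lower bound to a genuine ball of radius $\gtrsim_{(M,g)}1/\sqrt\lambda$ contained in $\Omega_\lambda$ and centred at $x_0$, using Lieb's lemma (which furnishes a wavelength-scale ball almost entirely occupied by $\Omega_\lambda$), a Vitali-type covering, and the tameness from (ii) to rule out a ``Swiss cheese'' configuration in which $\Omega_\lambda$ has large measure in $B_r(x_0)$ but leaves no room for a ball of comparable radius. I expect this last step to be the main obstacle, and it is precisely where the quantitative unique continuation is indispensable, since it forbids the nodal set from oscillating on sub-wavelength scales.

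For $n\ge3$ the same scheme runs, but the local asymmetry estimate deteriorates: the best available lower bound for $\Vol(\Omega_\lambda\cap B_r(x_0))/\Vol(B_r(x_0))$ acquires a negative power of $\lambda$, reflecting that one is forced onto balls somewhat smaller than $1/\sqrt\lambda$, on which the covering and asymmetry arguments are no longer sharp. Propagating this loss carefully---the content of Georgiev--Mukherjee's refinement, where one optimises the working scale $r$ against the accumulated measure deficit---yields a ball about $x_0$ of radius $\gtrsim_{(M,g)}1/\lambda^{\frac{1}{2n}+\frac{n-1}{4}}$, which reduces to the sharp exponent $\tfrac12$ when $n=2$. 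Carrying out that optimisation is the main technical hurdle in the higher-dimensional case; matching the conjecturally sharp $\lambda^{-1/2}$ for $n\ge3$ remains open.
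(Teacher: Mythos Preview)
The paper does not give its own proof of this theorem: it is stated in the preliminaries as a compilation of known results and attributed to the cited references \cite{H, Cr, O, Ta, Ma, Ma1, Lie, GM}, with no argument supplied. So there is nothing in the paper to compare your proposal against directly.

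That said, your sketch is a fair outline of how those references actually proceed. The upper bound via domain monotonicity and Euclidean comparison is exactly the standard argument (and is essentially the Wavelength density theorem's proof, as you note). For the lower bound in dimension $2$, the historical route is slightly different from what you describe: Hayman--Osserman--Croke--Taylor prove the inradius bound for simply connected planar domains, and the passage to nodal domains on surfaces goes through Cheng's result that nodal sets form equiangular systems at singular points, giving an interior cone condition under which a Hayman-type argument still applies. Your route via Mangoubi's local asymmetry is a valid alternative and is closer to what is needed in higher dimensions anyway. For $n\geq 3$ your description (asymmetry estimate with a $\lambda$-dependent loss, fed into a Lieb-type covering, with the centring at the max point coming from \cite{GM}) matches the literature. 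One small inaccuracy: the step you call ``ruling out Swiss cheese'' is not really a separate obstacle in Mangoubi's argument; once the local asymmetry bound is in hand, Lieb's intersection lemma converts it to an inscribed ball fairly directly, without an additional Vitali/tameness step of the kind you sketch.
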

We end this section by giving an overview of the paper.
\subsection{Overview of the paper} Here we take the space to list some of our main results.

First, we discuss the stability of topological properties of the first Dirichlet nodal set  (or, nodal set for any of the second Dirichlet eigenfunctions) under small perturbations. Some of these results are natural extensions to our previous work in \cite{MS}. Among other results, we prove that satisfying the strong Payne property\footnote{See Definition \ref{def:Payne_prop} below.} (or not satisfying) are both open conditions in a one-parameter family of perturbations of a given bounded domain $\Omega \subseteq \RR^n$: this is Proposition \ref{prop: SP/NP open condition} below. 
Observe that the results hold in all dimensions, and not restricted to planar domains only.

After a general laying of foundations, we discuss behaviour of the second Dirichlet eigenfunction in the connector or ``handle'' region of thin dumbbell domains (for a proper definition, see Subsection \ref{subsec: dumbbells} below). 
  In particular, we check the  validity of the Payne property (see Definition \ref{def:Payne_prop} below). We quote the result:
\begin{theorem}\label{thm: narrow connectors}
Consider two bounded 
domains $\Omega_1$, $\Omega_2\subset \RR^n~(n\geq 2)$ with $C^\infty$ boundary and a one parameter family of smooth dumbbells $\Omega_\epsilon$ (as described in Subsection \ref{subsec: dumbbells}) 
whose connector widths go to zero as $\epsilon\to 0$. Assume that $\Omega_i, i = 1, 2$ have simple second eigenvalues. Let $\lambda_{2}^{\Omega_i}, \lambda_{2,\epsilon}$ denote the second eigenvalues of $\Omega_i$, $\Omega_\epsilon$ corresponding to eigenfunctions $\varphi_2^{\Omega_i}$, $\varphi_{2,\epsilon}$ respectively, $i = 1, 2$. Assume 
that the connector does not intersect $\NNN(\varphi_2^{\Omega_i})\cap \pa \Omega_i$  and $\Omega_i$ do not have the same first or second Dirichlet eigenvalues.  

If $\lambda_{2,\epsilon} \to \lambda_2^{\Omega_1}$  (without loss of generality) and $\Omega_1$ satisfies the strong Payne property, 
then for sufficiently small $\epsilon>0$, $\Omega_\epsilon$ satisfies the strong Payne property as well. 
\end{theorem}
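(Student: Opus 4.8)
The plan is to track the second eigenfunction $\varphi_{2,\epsilon}$ of the dumbbell $\Omega_\epsilon$ as $\epsilon \to 0$ and show that its nodal set degenerates, in a controlled way, to the nodal set of $\varphi_2^{\Omega_1}$ on $\Omega_1$ (together with a vanishing contribution on $\Omega_2$ and on the connector). The strong Payne property for $\Omega_1$ says its second nodal set touches the boundary and divides $\Omega_1$ into exactly two pieces; we want to transport this to $\Omega_\epsilon$ for small $\epsilon$. First I would recall the standard spectral convergence for thin dumbbells: as the connector width goes to zero, the Dirichlet spectrum of $\Omega_\epsilon$ converges to the union (with multiplicity) of the spectra of $\Omega_1$ and $\Omega_2$, and one has $H^1$-convergence of the corresponding eigenfunctions after restriction to $\Omega_1$ and $\Omega_2$ respectively (this is classical — see e.g. work of Hempel--Seco--Simon, Jimbo, Arrighetti, or the Beale-type estimates; it is also the mechanism already used in the authors' earlier work \cite{MS}). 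The hypothesis that $\Omega_1,\Omega_2$ share neither first nor second Dirichlet eigenvalue guarantees that the limiting eigenvalue $\lambda_2^{\Omega_1}$ is simple \emph{as an eigenvalue of the disjoint union}, so the eigenfunction $\varphi_{2,\epsilon}$ is unambiguously close (up to sign) to $\varphi_2^{\Omega_1}$ on $\Omega_1$ and close to $0$ on $\Omega_2$ and on the connector.

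Next I would upgrade the $H^1$-convergence on $\Omega_1$ to $C^1_{\mathrm{loc}}$-convergence on compact subsets of the interior, and to uniform convergence up to the boundary away from the connector mouth, using interior and boundary elliptic estimates (Schauder or $L^p$) applied to the equation $-\Delta \varphi_{2,\epsilon} = \lambda_{2,\epsilon}\varphi_{2,\epsilon}$, together with the convergence $\lambda_{2,\epsilon}\to\lambda_2^{\Omega_1}$. The point of the hypothesis that the connector does not meet $\NNN(\varphi_2^{\Omega_i})\cap\pa\Omega_i$ is precisely that the place where the connector is attached is a region where $\varphi_2^{\Omega_1}$ is bounded away from zero (does not meet the nodal set on the boundary), so on a fixed neighborhood of the connector mouth in $\Omega_1$ the function $\varphi_{2,\epsilon}$ has a definite sign for small $\epsilon$. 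Then, on the domain $\Omega_1$ with the small connector disk removed, $\varphi_{2,\epsilon}$ is $C^1$-close to $\varphi_2^{\Omega_1}$, so by the standard stability argument for nondegenerate nodal sets (as in \cite{MS}, using that the nodal set of $\varphi_2^{\Omega_1}$ is a smooth hypersurface meeting $\pa\Omega_1$ transversally, which is part of the strong Payne package, or can be arranged since $\lambda_2^{\Omega_1}$ is simple) the nodal set $\NNN(\varphi_{2,\epsilon})\cap\Omega_1$ is a small perturbation of $\NNN(\varphi_2^{\Omega_1})$: it is a single hypersurface reaching $\pa\Omega_\epsilon$ and separating $\Omega_1\setminus(\text{mouth})$ into two pieces.

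Finally I would assemble the global picture: $\Omega_\epsilon$ decomposes as $\Omega_1' \cup C_\epsilon \cup \Omega_2'$ (approximately), where $\varphi_{2,\epsilon}$ has a sign on $\Omega_2'$ and on the connector $C_\epsilon$ (both being regions where the limit is $0$ but where, after rescaling, the sign is determined — this needs a short argument, e.g.\ $\varphi_{2,\epsilon}$ restricted to $\Omega_2$ is, after normalization, close to $\pm\varphi_1^{\Omega_2}$ which has constant sign, and on the thin connector there is no room for a nodal component by a domain-monotonicity/wavelength-density estimate since the connector is thin and $\lambda_{2,\epsilon}$ is bounded), and the only sign change happens along the perturbed hypersurface inside $\Omega_1$. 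Hence $\NNN(\varphi_{2,\epsilon})$ consists of a single hypersurface touching $\pa\Omega_\epsilon$ and dividing $\Omega_\epsilon$ into exactly two nodal domains, which is the strong Payne property for $\Omega_\epsilon$. The main obstacle I expect is the behaviour in and near the connector: one must rule out a stray nodal component living in the thin handle or crossing the handle mouth, and to do this cleanly one needs quantitative control — a lower bound on $|\varphi_{2,\epsilon}|$ near the mouth on the $\Omega_1$ side (from $C^1$-convergence plus the non-vanishing hypothesis) matched against an upper bound for $\|\varphi_{2,\epsilon}\|_{L^\infty}$ on the connector (from the thinness and an eigenvalue estimate); balancing these two estimates, and making sure the exceptional set where convergence fails shrinks fast enough relative to the connector width, is the delicate part.
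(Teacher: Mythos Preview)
Your proposal is correct and follows essentially the same route as the paper: spectral convergence for thin dumbbells, an upgrade from $L^2$ to uniform convergence via elliptic interior/boundary estimates, and then a stability argument for the nodal set on the $\Omega_1$ side, finishing with the fact that (SP) is an open condition.

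The one place where you work harder than necessary is the handling of $\Omega_2$ and the connector. You try to determine the sign of $\varphi_{2,\epsilon}$ there directly (via the renormalization claim $\varphi_{2,\epsilon}|_{\Omega_2}\approx \pm\varphi_1^{\Omega_2}$, and a wavelength-density bound in the handle); you correctly flag this as the delicate step. The paper sidesteps it entirely. It fixes a separating hypersurface $\Gamma'\subset\Omega_1$ between the connector mouth and $\NNN(\varphi_2^{\Omega_1})$, and uses only the $C^0$-convergence on the $\Omega_1$ side to see that (i) $\varphi_{2,\epsilon}$ is bounded away from zero on $\Gamma'$, and (ii) $\varphi_{2,\epsilon}$ changes sign on the $\Omega'$ side of $\Gamma'$. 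Since $\varphi_{2,\epsilon}$ has exactly two nodal domains, both of which are connected and both of which meet $\Omega'$, the nodal domain not containing $\Gamma'$ is trapped in $\Omega'$; hence $\NNN(\varphi_{2,\epsilon})\subset\Omega'$ for small $\epsilon$, with no analysis of $\Omega_2$ or the handle required. After that, the paper just invokes its Proposition that (SP) is preserved under the resulting Hausdorff-close nodal sets. So your ``main obstacle'' dissolves once you notice that a barrier hypersurface plus the two-nodal-domains topology already forces the nodal set to stay on the $\Omega_1$ side.
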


We note that the simplicity assumption of the second eigenvalue is true up to generic perturbations (see \cite{U}, and also Theorem \ref{thm:generic_spec} below).



Next, we start a discussion about negative results surrounding the Payne property. In Subsection \ref{subsec:counter_Four}, we first describe the 
two-dimensional counterexample in \cite{HHN} and the higher dimensional counterexample in \cite{F}. Next we give a new counterexample to the Payne property in higher dimensions, which has the merit of being simply connected as well. Our example is a perturbation of the base domain provided by Fournais in \cite{F}.  
We believe that conceptually our example might be slightly simpler than the counterexample in \cite{Ke}. We then finally indicate how to jazz it up to get a domain with prescribed topological complexity which violates the Payne property. 
More precisely,

\begin{theorem}\label{thm: satisfying NP}
Let $\Omega\subset \RR^n$ ($n\geq 3$) be any bounded domain with smooth boundary. Then there exists a family of (non-convex) domains $\Omega_\epsilon$ with the same fundamental group as $\Omega$ such that for small enough $\epsilon$, $\Omega_\epsilon$ does not satisfy the Payne property.
\end{theorem}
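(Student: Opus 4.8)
The plan is to start from a domain known to violate the Payne property and to graft onto it the topology of $\Omega$ without destroying the property. Recall that the Payne property asserts that the nodal set of a second Dirichlet eigenfunction touches the boundary; its negation, which we want to engineer, is that for some choice of second eigenfunction the nodal set is a closed hypersurface contained in the interior. Fournais's counterexample in \cite{F} provides, in every dimension $n \geq 3$, a bounded smooth domain $D$ whose second Dirichlet eigenvalue is simple and whose (unique up to scalar) second eigenfunction has interior nodal set — moreover this is a robust/stable phenomenon, since the nodal set of a simple eigenfunction varies continuously under smooth perturbations of the domain (this stability is exactly the kind of fact used to prove that the strong Payne property is an open condition, Proposition \ref{prop: SP/NP open condition}).

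The main construction is then a \emph{connected sum} carried out at small scale: given an arbitrary bounded smooth $\Omega \subset \RR^n$, I would rescale $\Omega$ to have very small diameter $\delta$ and attach it to $D$ by a thin smooth tube (a ``handle'' in the sense of the dumbbell construction of Subsection \ref{subsec: dumbbells}), placed near a point of $\partial D$ that is far from the nodal set of the second eigenfunction of $D$ and, if needed, iterate this to realize the full fundamental group of $\Omega$. Call the resulting smooth domain $\Omega_\epsilon$, where $\epsilon$ governs both the scale $\delta$ and the tube width. By the van Kampen theorem, for an appropriate gluing the fundamental group of $\Omega_\epsilon$ is that of $\Omega$ (the tube and the region near $D$ being simply connected and contributing nothing). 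Non-convexity is automatic once a nontrivial handle or a dumbbell neck is present.

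The spectral step is the familiar thin-handle / dumbbell analysis, parallel to Theorem \ref{thm: narrow connectors}: as $\epsilon \to 0$, the low-lying Dirichlet spectrum of $\Omega_\epsilon$ converges to the union of the Dirichlet spectra of the components $D$ and $\delta\!\cdot\!\Omega$, with eigenfunctions converging (in $L^2$ and locally in $C^k$ away from the junction) to the corresponding eigenfunctions on the components, extended by zero. Since $\delta\!\cdot\!\Omega$ is a small domain, its eigenvalues are $\gtrsim \delta^{-2}$, hence arbitrarily large; therefore for $\epsilon$ small the first and second Dirichlet eigenvalues of $\Omega_\epsilon$ are perturbations of $\lambda_1(D)$ and $\lambda_2(D)$, and the second eigenvalue stays simple. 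The second eigenfunction $\varphi_{2,\epsilon}$ then converges to (the zero-extension of) the Fournais eigenfunction on $D$, uniformly on compact subsets of the interior of $D$ away from the junction. Since the Fournais nodal set is a compact hypersurface strictly inside $D$ at positive distance both from $\partial D$ and from the junction, a Hurwitz-type / degree argument (or simply uniform $C^1$ convergence plus the implicit function theorem, using that the nodal set of the limit is regular — which we may assume after a further generic perturbation, or alternatively by a Brouwer-degree count of zeros on a tubular neighborhood) shows that $\varphi_{2,\epsilon}$ has, for $\epsilon$ small, a nodal component which is a closed hypersurface contained in the interior of $\Omega_\epsilon$. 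Hence $\Omega_\epsilon$ fails the Payne property.

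The main obstacle, as in all dumbbell-type arguments, is the spectral convergence and in particular controlling the \emph{behaviour near the junction}: one must show that no spurious low eigenvalue is created by the thin tube (a capacity/quasi-mode estimate, using that a thin tube of length bounded below has large Dirichlet eigenvalue, so it contributes nothing to the bottom of the spectrum) and that the limiting eigenfunction on $D$ is unaffected, i.e. the ``missing'' boundary piece where the tube is attached has vanishing capacity influence as $\epsilon \to 0$. The cleanest route is to quote the now-standard dumbbell spectral-convergence machinery (the same circle of ideas underlying Theorem \ref{thm: narrow connectors}) rather than to redo it; the only genuinely new point to verify is that the nodal set of the limit eigenfunction — being an interior compact hypersurface of $D$ — persists as an interior compact nodal component of $\varphi_{2,\epsilon}$, which follows from local $C^1$ convergence away from the (fixed, far-away) junction together with the openness argument already established in Proposition \ref{prop: SP/NP open condition}.
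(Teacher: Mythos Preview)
Your overall strategy---take a base domain violating the Payne property, attach a small-scale copy of $\Omega$ via a thin neck, and invoke dumbbell spectral convergence together with the openness of (NP)---is exactly the paper's route. However, there is a genuine gap in the topological step. You take as base domain the Fournais domain $D$ from \cite{F}, and then assert via van Kampen that ``the region near $D$ being simply connected and contributing nothing'' gives $\pi_1(\Omega_\epsilon)\cong\pi_1(\Omega)$. But the Fournais domain is \emph{not} simply connected (the paper states this explicitly in Subsection~\ref{subsec:counter_Four}): it is $B_{R_2}$ with a perforated sphere removed, and so carries its own nontrivial fundamental group. The connected sum of $D$ with a scaled $\Omega$ therefore has fundamental group $\pi_1(D)*\pi_1(\Omega)$, not $\pi_1(\Omega)$. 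Your remark about ``iterating'' to realize the full fundamental group does not repair this: adding handles cannot cancel the extra generators coming from $D$.

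The paper resolves precisely this issue before reaching Theorem~\ref{thm: satisfying NP}: the bulk of Subsection~\ref{subsec:counter_Four} is devoted to constructing a \emph{simply connected} modification $\Omega_{\eta_{n_0}}$ of the Fournais domain (by inserting thin tunnels along the inner sphere between the patches and showing, via Stollmann-type resolvent convergence and the stability arguments of Section~\ref{sec: stability}, that (NP) persists). Only once this simply connected base is in hand does the paper perform the connected sum with a rescaled $\Omega$, at which point van Kampen genuinely gives $\pi_1(\Omega_\epsilon)\cong\pi_1(\Omega)$. So the missing ingredient in your proposal is exactly this preliminary construction of a simply connected (NP) domain; with that in place, the rest of your argument matches the paper's.
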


Next, we begin an investigation into some geometric properties of the nodal set of low energy eigenfunctions. Our discussion is mainly centred around the angular properties of the nodal set where it meets the boundary. First, 
to set the stage, we prove the following for the case of the nodal set intersecting the boundary (the interior case has already been addressed in \cite{GM1}):
\begin{theorem}\label{thm:ICC}
    Suppose $\NNN_\varphi$ intersects the boundary of the manifold $\pa M$ at a point $p$. When $\text{dim }M = 3$, the nodal set $\NNN_\varphi$ at $p$ satisfies an interior cone condition (see Definition \ref{def:int_cone_cond} below) with angle $\gtrsim \frac{1}{\sqrt{\lambda}}$. When $\text{dim }M = 4$, $\NNN_\varphi$ at $p$ satisfies an interior cone condition with angle $\gtrsim \frac{1}{\lambda^{7/8}}$. Lastly, when $\text{dim }M \geq 5$, $\NNN_\varphi$ at $p$ satisfies an interior cone condition with angle $\gtrsim \frac{1}{\lambda}$.
\end{theorem}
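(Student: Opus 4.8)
The plan is to reduce the boundary case to an essentially local statement about the geometry of $\NNN_\varphi$ near a boundary point $p$, and then to import the interior cone estimates of \cite{GM1} via a doubling/reflection argument. First I would fix a point $p \in \NNN_\varphi \cap \pa M$ and work in a geodesic normal coordinate chart centered at $p$ in which $\pa M$ is flattened; since $M$ is smooth, on a ball $B_r(p)$ with $r$ of sub-wavelength order the metric $g$ is uniformly comparable to the Euclidean metric and the coordinate change distorts angles by a bounded factor, so it suffices to establish the cone condition in these coordinates. The next step is to choose the boundary condition correctly: under the Dirichlet condition $\varphi$ vanishes on $\pa M$, and under the Neumann condition $\pa_\eta\varphi = 0$ on $\pa M$, so in either case one can extend $\varphi$ across the flattened boundary by odd reflection (Dirichlet) or even reflection (Neumann). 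The reflected function $\widetilde\varphi$ satisfies an elliptic equation $-\widetilde\Delta \widetilde\varphi = \lambda \widetilde\varphi$ with respect to the reflected (Lipschitz, and after smoothing, uniformly elliptic) metric $\widetilde g$ on a full Euclidean ball $B_r(0)$, and $p = 0$ is now an interior zero of $\widetilde\varphi$.

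Once we are in the interior setting, I would invoke the quantitative interior cone condition from \cite{GM1}: near any interior zero, $\NNN_{\widetilde\varphi}$ satisfies an interior cone condition with opening angle controlled from below by the relevant inverse power of $\lambda$ — namely $\gtrsim \lambda^{-1/2}$ in dimension $3$, $\gtrsim \lambda^{-7/8}$ in dimension $4$, and $\gtrsim \lambda^{-1}$ in dimension $\geq 5$. (These exponents are exactly the ones that come out of the doubling-index / frequency-function machinery combined with the inner-radius lower bounds quoted in the theorem of \cite{H, Cr, O, Ta, Ma, Ma1, Lie, GM} above; the key input is that the vanishing order of $\widetilde\varphi$ at an interior point is $\lesssim \sqrt{\lambda}$, together with Garofalo–Lin type almost-monotonicity of the frequency.) The cone produced for $\widetilde\varphi$ at $0$ is a Euclidean cone $\CCC$ with apex at $p$ contained in $B_r(0) \setminus \NNN_{\widetilde\varphi}$. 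The final step is to descend: intersect $\CCC$ with the half-ball $B_r(0) \cap \{$interior side of $\pa M\}$. Because $\widetilde\varphi$ restricted to the interior side agrees with $\varphi$ and the reflection was chosen to respect the boundary condition, the half-cone $\CCC \cap \{$interior$\}$ is contained in a single nodal domain of $\varphi$; one checks that the solid angle of a half of a cone of angle $\theta$ is still $\gtrsim \theta$ (the constant loss is dimensional and $\lambda$-independent), and then transporting back to $M$ via the normal chart costs only another bounded factor. This yields an interior cone at $p$ for $\NNN_\varphi$ with the asserted angle bounds.

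The main obstacle I anticipate is the reflection step when $\pa M$ is curved: odd/even reflection across a curved hypersurface does not preserve smoothness of the metric (one gets only Lipschitz regularity of $\widetilde g$ across the interface), so one must verify that the frequency-function / doubling estimates underlying \cite{GM1} are robust under this mild loss of regularity — i.e. that they hold for uniformly elliptic divergence-form operators with Lipschitz (or even just bounded measurable, after a further approximation) coefficients, with constants depending only on the ellipticity and Lipschitz bounds of $\widetilde g$ and not on $\lambda$. An alternative that sidesteps this is to first straighten $\pa M$ by a diffeomorphism so that near $p$ the boundary is a hyperplane and $g$ becomes a smooth metric on a half-ball with, say, $g^{in} = \delta^{in}$ along the boundary hyperplane; then reflection of this straightened metric is genuinely Lipschitz and one can smooth it at scale $r$ without affecting the estimates. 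A secondary, more bookkeeping-type issue is making the dependence of the angle on $\lambda$ uniform in $p$: this follows because all the comparison constants (metric vs.\ Euclidean, chart distortion, ellipticity of $\widetilde g$) can be taken uniform over the compact manifold $M$, so the implied constants in $\gtrsim$ depend only on $(M,g)$ as required.
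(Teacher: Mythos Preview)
Your reflection argument is a valid route, but the paper takes a more direct and considerably simpler path. Rather than doubling across $\pa M$, the paper simply observes that the eigenequation $-\Delta\varphi_\lambda = \lambda\varphi_\lambda$ holds at the boundary point $p$ (since $\varphi_\lambda$ is smooth up to $\pa M$), so Bers' local expansion (Theorem~\ref{Zildo}) applies at $p$ exactly as it does at an interior point: the leading term $\varphi_k$ is a Euclidean harmonic homogeneous polynomial. One then shows the containment $\Cal{S}_p\NNN_\varphi \subseteq \Cal{S}_p\NNN_{\varphi_k}$ by a direct limit computation, and reads off the angle bound from the inradius estimates for the spherical harmonic $\text{exp}^*(\varphi_k)|_{S(T_pM)}$ on $S^{n-1}$ (using the $2$-dimensional bound on $S^2$, Mangoubi's bound on $S^3$, and the real-analytic bound of \cite{G} for $S^{n-1}$, $n\geq 5$). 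No reflection, no Lipschitz metric, no smoothing is needed.

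Your approach buys generality in principle (it would mechanically reduce any boundary statement to the interior one in \cite{GM1}), but at the cost of exactly the regularity issue you flag: after reflecting across a curved $\pa M$ the metric is only Lipschitz, and the Bers expansion underlying \cite{GM1} is stated for smooth coefficients. This can be repaired (freeze coefficients at $p$, or straighten and reflect as you suggest), but it is extra work that the paper's one-sided Bers argument avoids entirely. A minor additional point: in the Dirichlet case the reflected nodal set contains the hyperplane $\{x_n=0\}$, so the interior cone from \cite{GM1} already lies in one half-space and your ``intersect with the half-ball'' step is automatic; in the Neumann case the cone may straddle the hyperplane, but the even symmetry lets you reflect it to one side at the cost of a dimensional constant. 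Either way your argument closes, but the paper's approach is the cleaner one here.
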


Now, we give a sharpened version of Theorem \ref{thm:ICC} above about the angle of intersection at the nodal critical points provided the nodal set is locally the intersection of two hypersurfaces. 
\begin{theorem}\label{thm:angle_estimate_high_dim}
    Let 
    $p \in M$, where $M$ is a compact manifold of dimension $n \geq 3$. Let $p$ lie at the intersection of two nodal hypersurfaces $M_1, M_2$. 
    Let $\eta_1, \eta_2\in S^{n-1}$ be two unit normal vectors to $M_1$ and $M_2$ at 
    $p$. If the order of vanishing of $\varphi_\lambda$ at $p$ is $n_0$, then the angle between $M_1$ and $M_2$ at $p$,  $\arccos{\langle\eta_1, \eta_2 \rangle}\in P$, where
$$P=\left\{\frac{p}{q}\pi: q=1, 2, \cdots n_0, p= 0, 1, \cdots, q\right\}.$$
\end{theorem}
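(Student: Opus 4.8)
\textbf{Proof plan for Theorem \ref{thm:angle_estimate_high_dim}.} The key idea is to use the local structure theory of nodal sets of eigenfunctions, which asserts that near a point $p$ of vanishing order $n_0$, the eigenfunction $\varphi_\lambda$ is well-approximated by its leading Taylor polynomial, which is a homogeneous harmonic polynomial $P_{n_0}$ of degree $n_0$ (with respect to suitable geodesic normal coordinates; the lower-order curvature terms do not affect the tangent cone structure). First I would recall this fact: in geodesic normal coordinates centered at $p$, writing $\varphi_\lambda = P_{n_0} + O(|x|^{n_0+1})$, the nodal set $\NNN_{\varphi_\lambda}$ has a well-defined tangent cone at $p$ equal to the zero set $\{P_{n_0} = 0\}$, and moreover each smooth nodal hypersurface through $p$ corresponds to a factor of $P_{n_0}$ vanishing to first order along a hyperplane. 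So if $M_1, M_2$ are two nodal hypersurfaces meeting at $p$, with unit normals $\eta_1, \eta_2$, then the two linear forms $\ell_i(x) = \langle \eta_i, x\rangle$ both divide $P_{n_0}$, and in fact $P_{n_0} = \ell_1^{a} \ell_2^{b} Q(x)$ for some positive integers $a, b$ with $a + b \leq n_0$ and $Q$ not divisible by $\ell_1$ or $\ell_2$.

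\textbf{Step 2: reduce to a two-dimensional harmonic polynomial problem.} The crux is that $P_{n_0}$ is harmonic. Restrict attention to the $2$-plane $\Pi$ spanned by $\eta_1$ and $\eta_2$ (assuming $\eta_1 \neq \pm \eta_2$; the degenerate case is trivial). The restriction $P_{n_0}|_\Pi$ is a homogeneous polynomial of degree $n_0$ in two variables. It is not \emph{a priori} harmonic as a function on $\Pi$, but here is the point: a homogeneous harmonic polynomial of degree $m$ in $\RR^k$, when written in terms of a separating variable, has a rigid structure. I would instead argue as follows: consider the complexification and recall that a homogeneous harmonic polynomial $P_{n_0}$ in $\RR^n$ factors over $\CC$ into linear forms $\langle \zeta_j, x\rangle$ where each $\zeta_j \in \CC^n$ is an isotropic vector, $\langle \zeta_j, \zeta_j\rangle = 0$ — this is the classical fact that harmonic polynomials correspond to symmetric traceless tensors, and in the factorization the isotropy of the roots encodes harmonicity. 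The \emph{real} linear factors $\ell_1, \ell_2$ that cut out $M_1, M_2$ have real normals, so $\langle \eta_i, \eta_i \rangle = |\eta_i|^2 = 1 \neq 0$; these cannot be isotropic, hence each appears in the complex factorization paired appropriately.

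\textbf{Step 3: the angle quantization.} Project onto the plane $\Pi = \mathrm{span}(\eta_1, \eta_2)$ and choose orthonormal coordinates $(u, v)$ on $\Pi$; using polar coordinates $u = r\cos\theta$, $v = r\sin\theta$, the restriction of a degree-$n_0$ harmonic polynomial to $\Pi$, after accounting for the transverse variables, contributes only spherical harmonics of degree $\leq n_0$ on the circle, i.e. $P_{n_0}|_\Pi = r^{n_0} \sum_{q \leq n_0} (a_q \cos q\theta + b_q \sin q\theta)$. The hyperplanes $M_1 \cap \Pi$ and $M_2 \cap \Pi$ are lines through the origin at angles $\theta_1, \theta_2$, and being nodal lines they are zeros of this trigonometric polynomial of degree $\leq n_0$. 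The zero set of $\sum_{q\leq n_0}(a_q\cos q\theta + b_q\sin q\theta)$ on the circle, when it contains a line $\{\theta = \theta_i\} \cup \{\theta = \theta_i + \pi\}$... here I need the refinement that the nodal lines of a single harmonic polynomial restricted to a plane through the origin are \emph{equally spaced} — this is precisely the two-dimensional phenomenon (nodal lines of $\mathrm{Re}((u+iv)^q)$ type factors are $\pi/q$-spaced). So each irreducible real factor of $P_{n_0}|_\Pi$ through the origin is of the form $r^q \cos(q(\theta - \theta_0))$ up to scaling, contributing $q$ equally spaced lines; since $M_1 \cap \Pi$ and $M_2 \cap \Pi$ are among the union of such families with total degree $n_0$, the angle $\theta_1 - \theta_2$ between them is a difference of two angles each lying in a set $\frac{\pi}{q}\ZZ$ with $q \leq n_0$, which forces $\arccos\langle \eta_1, \eta_2\rangle \in P$. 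I would write this last combinatorial step carefully.

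\textbf{Main obstacle.} The delicate point — and the step I expect to require the most care — is justifying that the \emph{leading Taylor polynomial} $P_{n_0}$ genuinely controls the angle between the two smooth nodal hypersurfaces, i.e. that passing from $\varphi_\lambda$ to $P_{n_0}$ is legitimate: one must show that if $M_i$ is a smooth hypersurface contained in $\NNN_{\varphi_\lambda}$ through $p$ with normal $\eta_i$, then the linear form $\langle \eta_i, x\rangle$ divides $P_{n_0}$. This follows from expanding $\varphi_\lambda$ in normal coordinates adapted to $M_i$ (so $M_i = \{x_n = h(x')\}$ with $\nabla h(0) = 0$) and matching lowest-order terms, but one has to be careful that the metric coefficients' deviation from Euclidean, which is $O(|x|^2)$, does not interfere at order $n_0$ — which it does not, since it only perturbs $\Delta$ at lower order and the vanishing-order and tangent-cone statements are purely local and metric-independent at leading order (cf. the harmonic approximation heuristic mentioned after the Fourier synthesis discussion, and the references \cite{GM1}, \cite{Ma}). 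A secondary obstacle is handling the case where $\eta_1$ and $\eta_2$ are linearly dependent (then the angle is $0$ or $\pi$, both in $P$) versus the generic case, and ensuring the planar-restriction argument in Step 3 does not lose equal-spacing when $\Pi$ is not a coordinate plane — this is handled by an orthogonal change of variables, under which both harmonicity of $P_{n_0}$ and the form of the restriction are preserved.
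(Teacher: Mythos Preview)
Your reduction to the leading harmonic polynomial $P_{n_0}$ via Bers (Steps 1--2 and the ``main obstacle'' paragraph) is correct and is exactly how the paper begins. The gap is in Step 3. First, the restriction $P_{n_0}|_\Pi$ is \emph{not} harmonic as a two-variable polynomial, and once 2D-harmonicity is lost there is no reason its real linear factors should be equally spaced or of the form $r^q\cos q(\theta-\theta_0)$; for instance $P_3=x_1x_2x_3$ is harmonic in $\RR^3$ but restricts to $0$ on $\Pi=\operatorname{span}(e_1,e_2)$. Second, the complex-factorization pivot is false in dimension $\geq 3$: not every harmonic homogeneous polynomial factors into isotropic linear forms over $\CC$ (e.g.\ $2z^2-x^2-y^2$ is an irreducible quadratic). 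Third, even granting your picture, the combinatorial step fails: if $\theta_1\in\frac{\pi}{q_1}\ZZ$ and $\theta_2\in\frac{\pi}{q_2}\ZZ$ with $q_1,q_2\leq n_0$, their difference lies in $\frac{\pi}{\operatorname{lcm}(q_1,q_2)}\ZZ$, and $\operatorname{lcm}(q_1,q_2)$ can be much larger than $n_0$.

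The paper (following \cite{F-BGJ}) avoids all of this by \emph{not} restricting to $\Pi$ but instead Fourier-decomposing in the azimuthal angle $\varphi$ of $\Pi$. Choose spherical coordinates $(r,\theta,\varphi)$ with $\theta\in[0,\pi)^{n-2}$ so that the tangent hyperplanes to $M_1,M_2$ are $\{\varphi=\varphi_i\}$; expand $P_{n_0}$ in the spherical-harmonic basis $r^{n_0}e^{im\varphi}\tilde Y_{\beta,m}(\theta)$ (Gegenbauer products in $\theta$). Vanishing of $P_{n_0}$ on \emph{all} of $\{\varphi=\varphi_i\}$ (not just its trace on $\Pi$) together with linear independence of the $\tilde Y_{\beta,m}(\theta)$ yields, for each $m=1,\dots,n_0$ and each $\beta$, the $2\times 2$ system $c_{\beta,m}e^{im\varphi_i}+c_{\beta,-m}e^{-im\varphi_i}=0$ ($i=1,2$) with determinant $2i\sin m(\varphi_1-\varphi_2)$. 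If the angle were not in $P$, every determinant would be nonzero, forcing all $c_{\beta,\pm m}=0$ and hence $P_{n_0}\equiv 0$, contradicting the vanishing order. The point you are missing is that the $\theta$-dependence must be kept as an independent family of test functions rather than evaluated at $\theta\in\Pi$.
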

Roughly, since the angle of intersection cannot change continuously, this result should be seen as a ``perturbation resistant'' nodal geometry phenomenon, and puts a constraint on the possible Payne configurations before the bifurcation.

In the last section of our paper, we start focusing  more on low energy spectral theory (properties of eigenvalues) with the help of our perturbation theoretic tools. To set up the stage, we first give a proof of a well-known result of Uhlenbeck that generic smooth perturbations of a domain have simple Dirichlet spectrum (this is Theorem \ref{thm:generic_spec} below). The ideas involved in our proof are based on \cite{GS} and are well-known by now, though we have not seen this exact proof in literature. 
Finally,  we start investigating 
our main target to address the question of whether the fundamental gap $\lambda_2 - \lambda_1$ is attained on convex domains. Here is what we prove:
Let $\mathfrak{P}$ denote the class of strictly convex $C^2$-planar domains. Then, we have that 
\begin{theorem}\label{thm:fund_gap_perturb}
    Let $\Omega \in \mathfrak{P}$ 
    with  diameter $D = 1$ and inner radius $\rho$. There exists a universal constant $C \ll 1$ such that if $\rho \leq C$, $\Omega$ cannot be the minimiser of the fundamental gap functional in $\mathfrak{P}$. 
\end{theorem}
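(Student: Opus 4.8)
The plan is a direct perturbation argument: starting from $\Omega$, I would exhibit a nearby competitor in $\mathfrak{P}$ with strictly smaller fundamental gap, obtained by pushing $\partial\Omega$ inward along a tiny boundary arc on which $\varphi_1$ has nonzero normal derivative while $\varphi_2$ has vanishing normal derivative. Three ingredients are needed: (i) that the hypothesis $\rho\le C$ forces $\lambda_2(\Omega)$ to be simple; (ii) that the nodal set of $\varphi_2$ reaches $\partial\Omega$; and (iii) Hadamard's variational formula for simple Dirichlet eigenvalues. Notably, the Andrews--Clutterbuck bound plays no role, and the smallness of $\rho$ is used only for (i).

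For (i) I would first observe that for a convex planar domain with $\diam\Omega=1$, a small inner radius forces a small width: writing $\Omega$ as the region between a convex and a concave graph over its projection $[0,L]$ onto a longest chord, the width profile $w$ is concave with $w(0)=w(L)=0$, so by concavity the set $\{w\ge\tfrac12\max w\}$ has length $\gtrsim L\gtrsim 1$; hence a rectangle of dimensions $\gtrsim 1$ by $\tfrac12\max w$ sits inside $\Omega$, producing an inscribed ball of radius $\gtrsim\min(\max w,1)$, so that $\max w\lesssim\rho$ once $\rho$ is below a universal threshold. With the width uniformly small, the thin-domain spectral asymptotics (in the spirit of Friedlander--Solomyak and Borisov--Freitas) apply: $\lambda_1(\Omega),\lambda_2(\Omega),\lambda_3(\Omega)$ all belong to the ``ground transverse mode'' cluster and agree, to leading order, with successive eigenvalues of an effective one-dimensional Schr\"odinger operator, which has simple spectrum; thus $\lambda_2(\Omega)<\lambda_3(\Omega)$ and $\lambda_2(\Omega)$ is simple for every $\Omega\in\mathfrak{P}$ with $\rho\le C$. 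For (ii) I would invoke Melas' theorem: since $\Omega\in\mathfrak{P}$ is a convex $C^2$ planar domain, it has the strong Payne property, so $\NNN(\varphi_2)$ is a simple arc joining two boundary points $P_1,P_2$ and $\Omega$ is cut into exactly two nodal domains, each abutting $P_1$. Consequently $\partial_\eta\varphi_2(P_1)=0$: were it nonzero, $\varphi_2$ would be comparable to a nonzero multiple of the distance to $\partial\Omega$ near $P_1$, hence of one sign there, contradicting that both nodal domains touch $P_1$. Since $\varphi_1>0$ in $\Omega$, Hopf's lemma gives $\partial_\eta\varphi_1(P_1)\ne0$, so by continuity $|\partial_\eta\varphi_2|^2-|\partial_\eta\varphi_1|^2<0$ on an arc $U\subset\partial\Omega$ around $P_1$.

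Now the perturbation. Since $\partial\Omega$ is strictly convex and $C^2$, I can push it inward along $U$ with a smooth, compactly supported inward normal velocity $-|V|\eta$, obtaining for small $t>0$ a domain $\Omega_t\subset\Omega$ that is still strictly convex and $C^2$, i.e. $\Omega_t\in\mathfrak{P}$, with $\diam\Omega_t\le\diam\Omega=1$. By Hadamard's formula (legitimate since $\lambda_1(\Omega),\lambda_2(\Omega)$ are simple, by (i)),
\[
\frac{d}{dt}\Big|_{t=0}\big(\lambda_2-\lambda_1\big)(\Omega_t)=\int_{U}\big(|\partial_\eta\varphi_2|^2-|\partial_\eta\varphi_1|^2\big)\,|V|\,dS<0,
\]
so $(\lambda_2-\lambda_1)(\Omega_t)<(\lambda_2-\lambda_1)(\Omega)$ for small $t>0$. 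Rescaling $\widetilde\Omega_t:=\Omega_t/\diam\Omega_t$ keeps us in $\mathfrak{P}$ with $\diam=1$ and, since $\diam\Omega_t\le1$, only decreases the gap further: $(\lambda_2-\lambda_1)(\widetilde\Omega_t)=(\diam\Omega_t)^2(\lambda_2-\lambda_1)(\Omega_t)\le(\lambda_2-\lambda_1)(\Omega_t)<(\lambda_2-\lambda_1)(\Omega)$. Hence $\Omega$ is beaten inside $\mathfrak{P}$ by a diameter-$1$ competitor and cannot be the minimiser.

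The step I expect to be the real work is (i): uniform simplicity of $\lambda_2$ over \emph{all} strictly convex $C^2$ domains of diameter $1$ with inner radius below one universal constant. The width-versus-inradius estimate reduces this to uniform thin-domain spectral asymptotics; the one genuinely delicate point is that the width profile may have large curvature near the two tips of $\Omega$, but the eigenfunctions entering $\lambda_1,\lambda_2,\lambda_3$ decay exponentially there, so the one-dimensional reduction, and with it the ordering $\lambda_1<\lambda_2<\lambda_3$, is unaffected. Everything else (Melas' theorem, Hopf's lemma, Hadamard's formula, the scaling trick) is standard.
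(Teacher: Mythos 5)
Your proposal is correct in outline and reaches the conclusion by a genuinely different route from the paper. The paper argues by contradiction from criticality: assuming $\Omega$ is a minimiser with \emph{simple} $\lambda_2$, the vanishing of the Hadamard derivative for \emph{all} admissible diameter-preserving vector fields forces $|\partial_\eta\varphi_1|=|\partial_\eta\varphi_2|$ on $\partial\Omega$ away from the diameter endpoints, which is then contradicted at the two boundary points where the nodal line lands (Melas plus Jerison's localisation); the non-simple case is handled separately by a matrix-signature perturbation (Theorem \ref{thm:matrix_sig}) that still strictly decreases the gap. You instead (a) rule out the non-simple case a priori for all thin members of $\mathfrak{P}$, and (b) exhibit a single explicit descent direction — an inward push near a point $P_1$ where $\partial_\eta\varphi_2=0\neq\partial_\eta\varphi_1$ — so you never need the ``for all $V$'' criticality identity. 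Step (b) is clean and correct: the sign computation in Hadamard's formula, the preservation of strict convexity and the $C^2$ class for small $t$, and the rescaling trick using $\diam\Omega_t\le 1$ all check out. This buys a more constructive argument than the paper's in the simple-eigenvalue case.

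The one place where you are asserting rather than proving is step (i), and you rightly flag it as the real work. Uniform validity of the one-dimensional reduction over the \emph{entire} class of strictly convex $C^2$ domains with $\rho\le C$ (not just a fixed degenerating family with a prescribed profile) requires the Grieser--Jerison type estimate $|\lambda_j-\mu_j|\lesssim \rho$ together with a uniform lower bound $\mu_2-\mu_1\gtrsim 1$ for the effective operator; neither is automatic from the references you name, and the behaviour near the tips needs the quantitative Agmon-type decay you allude to. Note that the paper's own toolkit gives (i) more cheaply: if $\lambda_2$ had multiplicity $\ge 2$, linear combinations of second eigenfunctions produce one whose nodal set passes through any prescribed point, in particular a point near an end $x_1$, contradicting Jerison's theorem that the first nodal line of a sufficiently long, narrow convex domain stays away from the ends (this is exactly the argument in the Remark following the paper's proof). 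Substituting that argument for your thin-domain asymptotics would close the gap without new analysis.
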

The proof uses a significant portion of the ideas on perturbation theory developed so far in Section \ref{sec:perturbation}. We are unable yet to get a result in the full class of all convex domains. However, the popular belief in the community seems that the fundamental gap is not saturated in the class of all convex domains, and any infimising sequence for $\lambda_2 - \lambda_1$ (under the normalisation $D = 1$) should degenerate to a line segment. This indicates that the ``correct regime'' to look for in the search for minimisers is the class of narrow convex domains. Section \ref{sec:perturbation} is interspersed with open questions/speculations/conjectures which we believe to be of further/related interest, at any rate to the present authors!

We add an Appendix at the end where we investigate the interconnection of multiplicity of eigenvalues and the topology of the first nodal set. We begin by checking that results in \cite{Do, Gi} implying topological complexity of  the domain from ``detachment'' or ``non-intersection'' of nodal sets of eigenfunctions still hold true for Euclidean domains with appropriate boundary conditions. Further, we show that for a broad class of non-convex simply connected domains, the multiplicity of the second Dirichlet eigenvalue is $\leq 2$. These are directly related to the Payne property via an insight from \cite{Lin}, and should be of some value in the future studies of the Payne property. 



\section{Stability, Payne property and some preliminary results}\label{sec: stability}
In this section we will first look into a certain aspect of the nodal sets that remains stable under perturbation. Note that many aspects of nodal sets of Laplace eigenfunctions are rather unstable under perturbation, which normally disallows perturbative techniques (like normalized Ricci flow and related geometric flows etc.) in the study of nodal geometry. However, one is inclined to ask the question that if the perturbations are ``small enough'', are there certain ``soft'' properties of the nodal set that are still reasonably stable? This was answered in \cite{MS} where we proved that if the perturbation is of subwavelength scale, then the nodal sets do not ``see it''. The one-parameter family of perturbations considered in \cite{MS} was inspired from the construction by Takahashi in \cite{Tak}, but our proof is not restricted to that. Below we provide a more general setting under which the stability arguments go through. 

\subsection{Stability under general perturbations}
Suppose $\Omega_t$ is a one-parameter family of perturbations to the domain $\Omega_{t_0}$ with a simple spectrum such that for all $k = 1, \cdots,$ we have 
\begin{equation}\label{eq:eigenval_conv}
    \lim_{t \to t_0} \lambda_k(\Omega_t) = \lambda_k(\Omega_{t_0}).
\end{equation}
Fixing $k$, additionally assume that the perturbations happen away from the $k$-th nodal set of $\Omega_{t_0}$ and consider  
the following $C^\infty-$convergence\footnote{Actually, it turns out that for almost all of our applications, only $C^0$-convergence would suffice.} of eigenfunctions $\varphi_{k,t}\in C^{\infty}(\Omega_t)$, where $\varphi_{k, t}$ is defined as the $k$-th Dirichlet eigenfunction of $\Omega_t$:
\begin{equation}
    \lim_{t \to t_0} \varphi_{k,t}= \varphi_{k,t_0} \text{ on } 
    \tilde{\Omega} := \{ x \; : \; \exists \varepsilon_x > 0 \text{ such that } x \in \bigcap_{t_0 - \varepsilon, t_0 + \varepsilon}\Omega_t \; \forall \; \varepsilon < \varepsilon_x\}. 
\end{equation}
$\tilde{\Omega}$ defined above denotes the unperturbed part of $\Omega_{t_0}$. 
All the eigenfunctions involved are assumed to be $L^2$-normalized, i.e.,
$$
\|\varphi_{k,t}\|_{L^2(\Omega_{t})} = \|\varphi_{k,t_0} \|_{L^2(\Omega_{t_0})} = 1.
$$
Then we have the following
\begin{lemma}\label{lem:lim_M_1}
Let $\NNN(\varphi_{k,t})$ and $\NNN(\varphi_{k,t_0})$ denote the nodal sets corresponding to 
$\varphi_{k,t}$ and $\varphi_{k,t_0}$ respectively. Consider a sequence of points $\{x_i\}$ such that for each $i$, $x_i\in \NNN(\varphi_{k,t_i})\cap 
\tilde{\Omega}$. If the limit $x$ of $\{x_i\}$ exists, then $x\in \NNN(\varphi_{k,t_0})$.
\end{lemma}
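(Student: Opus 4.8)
The plan is to exploit the $C^\infty$ (indeed $C^0$ suffices) convergence $\varphi_{k,t}\to\varphi_{k,t_0}$ on the unperturbed region $\tilde\Omega$ together with continuity of $\varphi_{k,t_0}$. First I would fix a sequence $t_i\to t_0$ and the associated points $x_i\in\NNN(\varphi_{k,t_i})\cap\tilde\Omega$ with $x_i\to x$, and observe that since the perturbations are localized away from the $k$-th nodal set of $\Omega_{t_0}$, the limit point $x$ lies in $\overline{\tilde\Omega}$; in fact I would argue $x\in\tilde\Omega$ (or at worst on its boundary, where $\varphi_{k,t_0}$ still makes sense by continuity up to the boundary of $\Omega_{t_0}$), so that evaluating $\varphi_{k,t_0}$ at $x$ is legitimate.

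The core estimate is the triangle-inequality splitting
\begin{equation*}
|\varphi_{k,t_0}(x)| \leq |\varphi_{k,t_0}(x) - \varphi_{k,t_0}(x_i)| + |\varphi_{k,t_0}(x_i) - \varphi_{k,t_i}(x_i)| + |\varphi_{k,t_i}(x_i)|.
\end{equation*}
The third term vanishes identically because $x_i\in\NNN(\varphi_{k,t_i})$. The first term tends to $0$ as $i\to\infty$ by continuity of $\varphi_{k,t_0}$ and $x_i\to x$. For the second term I would use the hypothesized convergence $\varphi_{k,t}\to\varphi_{k,t_0}$ on $\tilde\Omega$: since the $x_i$ eventually lie in a fixed compact subset $K\subset\tilde\Omega$ (a closed neighbourhood of $x$ contained in $\tilde\Omega$), and the convergence is at least $C^0$ and hence uniform on $K$, we get $\sup_{y\in K}|\varphi_{k,t_0}(y)-\varphi_{k,t_i}(y)|\to 0$, which controls the middle term. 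Passing to the limit yields $\varphi_{k,t_0}(x)=0$, i.e. $x\in\NNN(\varphi_{k,t_0})$.

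The one genuine subtlety — and what I expect to be the main obstacle — is justifying that $x$ lands where $\varphi_{k,t_0}$ is actually defined and continuous, and that the $x_i$ ultimately lie in a compact subset of $\tilde\Omega$ on which the uniform convergence hypothesis applies. This requires unwinding the definition of $\tilde\Omega=\{x:\exists\,\varepsilon_x>0\text{ with }x\in\bigcap_{|t-t_0|<\varepsilon}\Omega_t\ \forall\varepsilon<\varepsilon_x\}$: if $x\in\tilde\Omega$ there is $\varepsilon_x>0$ and a neighbourhood on which all nearby $\varphi_{k,t_i}$ are defined, giving the needed compact set; if $x\in\partial\tilde\Omega$ one either appeals to continuity of the Dirichlet eigenfunctions up to the boundary (so $\varphi_{k,t_0}(x)$ still makes sense and the boundary values, if on $\partial\Omega_{t_0}$, are zero anyway, making the conclusion trivial), or notes that since perturbations avoid $\NNN(\varphi_{k,t_0})$ this boundary case cannot produce a spurious point. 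Everything else is a routine $\varepsilon$-argument; the care is entirely in the bookkeeping of which region each eigenfunction is defined on.
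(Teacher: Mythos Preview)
Your proof is correct and is essentially the same as the paper's: the paper bounds $|\varphi_{k,t_0}(x_i)| = |\varphi_{k,t_0}(x_i) - \varphi_{k,t_i}(x_i)| \le \|\varphi_{k,t_0} - \varphi_{k,t_i}\|_{C^0(\tilde\Omega)} \to 0$ and then invokes continuity of $\varphi_{k,t_0}$ to pass from $x_i$ to $x$, which is your triangle-inequality argument with the zero third term absorbed. Your extra discussion of whether $x\in\tilde\Omega$ or $x\in\partial\tilde\Omega$ is careful bookkeeping that the paper simply omits.
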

Observe that there is no requirement of convexity or boundary regularity.
\begin{proof}
Denote $\tilde{\varphi}_{k,t} = \varphi_{k,t}|_{\tilde{\Omega}}$. We apply the convergence  
 $$\varphi_{k,t} \to \varphi_{k,t_0} \text{ in  } C^0 (\tilde{\Omega})$$
 to obtain,
 $$|\varphi_{k,t_0}(x_i)|= |\varphi_{k,t_0}(x_i)- \tilde{\varphi}_{k,t_i}(x_i)|\leq \|\varphi_{k,t_0} - \tilde{\varphi}_{k,t_i}\|_{C^0 (\tilde{\Omega})} \to 0 \hspace{5pt} \text{ as } i\to \infty.$$
  Now, $\varphi_{k,t_0}$ being a continuous function, $x_i\to x$ implies $\varphi_{k,t_0}(x_i)\to \varphi_{k,t_0}(x)$. Therefore, $\varphi_{k,t_0}(x)=0$ i.e. $x\in \NNN(\varphi_{k,t_0})$. 
\end{proof}

Now we have the following
\begin{lemma}\label{lem:nod_set_eventually}
If $\NNN(\varphi_{k,t_0})\subset \tilde{\Omega}$, then for $t$ close enough to $t_0$, the nodal set $\varphi_{k,t}$ is fully contained inside $\tilde{\Omega}$.
\end{lemma}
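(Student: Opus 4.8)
The plan is to argue by contradiction using compactness together with Lemma \ref{lem:lim_M_1}. Suppose the conclusion fails. Then there is a sequence $t_i \to t_0$ such that for each $i$ the nodal set $\NNN(\varphi_{k,t_i})$ is \emph{not} contained in $\tilde{\Omega}$, so we may pick $x_i \in \NNN(\varphi_{k,t_i}) \setminus \tilde{\Omega}$. The point to keep in mind is that $x_i$ lies outside the ``stable'' region, i.e. in the part of $\Omega_{t_i}$ affected by the perturbation; since the perturbations happen away from the $k$-th nodal set of $\Omega_{t_0}$, the complement $\Omega_{t_0}\setminus\tilde\Omega$ together with the nearby perturbed pieces all sit at a definite distance from $\NNN(\varphi_{k,t_0})$. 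So first I would fix a compact neighbourhood: since $\NNN(\varphi_{k,t_0}) \subset \tilde\Omega$ and $\NNN(\varphi_{k,t_0})$ is closed (hence compact, being a closed subset of the compact $\overline{\Omega_{t_0}}$), while the ``unstable'' region is disjoint from it, there exists $\delta > 0$ such that $\dist(x, \NNN(\varphi_{k,t_0})) \geq \delta$ for every $x$ in the closure of the perturbed region, in particular $\dist(x_i, \NNN(\varphi_{k,t_0})) \geq \delta$ for all $i$.

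Next I would extract a convergent subsequence. The points $x_i$ all lie in a fixed compact set (the union of the $\overline{\Omega_{t_i}}$ for $i$ large is contained in a fixed compact subset of $\RR^n$, by the nature of a one-parameter perturbation family), so after passing to a subsequence we may assume $x_i \to x$ for some limit point $x$. Here is the one subtlety: Lemma \ref{lem:lim_M_1} requires the hypothesis points to lie in $\tilde\Omega$, whereas our $x_i$ do not. I would circumvent this as follows. Because the perturbation widths shrink to $0$ as $t \to t_0$ (this is part of what ``perturbation'' means here — the symmetric difference $\Omega_t \triangle \Omega_{t_0}$ shrinks), the distance from $x_i$ to $\tilde\Omega$ tends to $0$; so I can choose $y_i \in \tilde\Omega$ with $|x_i - y_i| \to 0$, and moreover (by the wavelength-density / quantitative vanishing estimates, or more simply by continuity of $\varphi_{k,t_i}$ and the fact that its $C^0$ norm near $x_i$ is controlled) arrange that $y_i$ may be taken on $\NNN(\varphi_{k,t_i}) \cap \tilde\Omega$ — indeed, the nodal set of a Laplace eigenfunction near a zero cannot be an isolated point in dimension $\geq 2$, so $\NNN(\varphi_{k,t_i})$ accumulates at $x_i$ from within a small ball, part of which lies in $\tilde\Omega$ once the perturbation width at time $t_i$ is smaller than that small ball's radius. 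Then $y_i \to x$ as well, and $y_i \in \NNN(\varphi_{k,t_i}) \cap \tilde\Omega$, so Lemma \ref{lem:lim_M_1} applies and gives $x \in \NNN(\varphi_{k,t_0})$.

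Finally I would derive the contradiction: on the one hand $x \in \NNN(\varphi_{k,t_0})$, on the other $\dist(x_i, \NNN(\varphi_{k,t_0})) \geq \delta$ for all $i$ forces $\dist(x, \NNN(\varphi_{k,t_0})) \geq \delta > 0$, which is impossible. Hence for $t$ sufficiently close to $t_0$, every point of $\NNN(\varphi_{k,t})$ lies in $\tilde\Omega$, as claimed.

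\textbf{Main obstacle.} The delicate point is the step passing from $x_i$ outside $\tilde\Omega$ to a genuine nodal point $y_i$ inside $\tilde\Omega$ — i.e. making sure the nodal set of the perturbed eigenfunction really does poke into the stable region near $x_i$. This relies on a local structural fact about nodal sets of eigenfunctions (no isolated zeros in dimension $\geq 2$; a nodal set component has positive diameter, controlled below by the wavelength via the results quoted in Section 1) combined with the fact that the perturbation width at time $t_i$ is eventually smaller than that wavelength-scale lower bound. An alternative, cleaner route that avoids this: observe that if $\NNN(\varphi_{k,t_i})$ meets the perturbed region for a sequence $t_i \to t_0$, one can instead directly run the $C^0$-convergence argument on the part of $\NNN(\varphi_{k,t_i})$ that \emph{does} lie in $\tilde\Omega$ together with a connectedness/continuity argument showing the whole nodal component must then lie in $\tilde\Omega$ once the perturbation is small — but in either formulation, controlling how the nodal set behaves right at the interface $\partial\tilde\Omega$ is where the real work is.
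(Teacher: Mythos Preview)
The paper does not give a self-contained proof here: it simply says the argument ``follows by routine modifications to the proof of Lemma~3.6 of \cite{MS}'' and skips the details. So there is no line-by-line comparison to make, but your overall architecture --- contradiction, compactness, and feeding a limit point into Lemma~\ref{lem:lim_M_1} --- is exactly the natural one and is presumably what the reference contains.

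You have also put your finger on the genuine difficulty: Lemma~\ref{lem:lim_M_1} only applies to nodal points lying in $\tilde\Omega$, whereas your offending points $x_i$ are by construction outside $\tilde\Omega$. Your proposed fix (replace $x_i$ by a nearby $y_i\in\NNN(\varphi_{k,t_i})\cap\tilde\Omega$ using ``no isolated zeros'' plus ``perturbation width $\to 0$'') is reasonable in many concrete situations, but it leans on the diameter of the perturbed region going to zero, which is not part of the stated hypotheses in this section --- only eigenvalue convergence \eqref{eq:eigenval_conv} and $C^0$-convergence on $\tilde\Omega$ are assumed. So as written this step is a gap in generality.

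The cleaner and more robust argument, which you gesture at in your ``alternative route'' and which the paper itself uses a few lines later in the proof of Proposition~\ref{prop: SP/NP open condition}, is a trapped-nodal-domain / Faber--Krahn contradiction. Fix a compact $K\subset\tilde\Omega$ with $\NNN(\varphi_{k,t_0})\subset\operatorname{int}(K)$ and with each connected component of $\Omega_t\setminus K$ lying (for $t$ near $t_0$) in a single nodal domain of $\varphi_{k,t_0}$; this is possible precisely because the perturbation is assumed to occur away from $\NNN(\varphi_{k,t_0})$. By $C^0$-convergence on $\tilde\Omega$, $\varphi_{k,t}$ has a definite sign on $\partial K\cap\Omega_t$ near each such component. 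If $\varphi_{k,t}$ vanished somewhere in $\Omega_t\setminus K$, then (since eigenfunctions change sign across their nodal set) there would be a nodal domain of $\varphi_{k,t}$ entirely contained in $\Omega_t\setminus K$, forcing $\lambda_k(\Omega_t)\geq\lambda_1(\Omega_t\setminus K)$, which is large because $\Omega_t\setminus K$ is small; this contradicts $\lambda_k(\Omega_t)\to\lambda_k(\Omega_{t_0})$. This closes the gap without any assumption on the diameter of the perturbed region.
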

\begin{proof}
The proof 
follows by routine modifications to the proof of Lemma 3.6 of \cite{MS}, hence we skip the details.
\end{proof}

Next, we also recall the following convergence theorem from \cite{HP} which will play a crucial role in the upcoming discussion.
\begin{theorem}[Theorem 2.2.25, \cite{HP}]\label{thm: Henrot_conv_theo}
Let $K_n$ be a sequence of compact sets contained in a fixed compact set $B$. Then there exist a compact set $K$ contained in $B$ and a subsequence $K_{n_k}$ that converges in the sense of Hausdorff to $K$ as $k\to \infty$.
\end{theorem}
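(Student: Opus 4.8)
The statement is the Blaschke selection principle, a classical fact; here is how I would argue it. The plan is to view the collection $\Cal{K}(B)$ of all non-empty compact subsets of $B$, equipped with the Hausdorff distance $d_H(X,Y)=\max\{\sup_{x\in X}\dist(x,Y),\ \sup_{y\in Y}\dist(y,X)\}$, as a metric space, and to prove that this space is compact; sequential compactness is then exactly the assertion. (If infinitely many $K_n$ are empty, pass to that subsequence and take $K=\emptyset$; so we may assume all $K_n\neq\emptyset$.) Since a metric space is compact if and only if it is complete and totally bounded, the proof splits into establishing those two properties for $\Cal{K}(B)$.

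For total boundedness I would use that the compact set $B$ is itself totally bounded: given $\delta>0$, fix a finite $\delta$-net $F_\delta\subseteq B$. The finite family of all non-empty subsets of $F_\delta$ is then a $\delta$-net for $\Cal{K}(B)$, because to $K\in\Cal{K}(B)$ one associates $A_K:=\{c\in F_\delta:\dist(c,K)\le\delta\}$, which is non-empty (every point of $K$ is $\delta$-close to some element of $F_\delta$, and that element lies in $A_K$) and satisfies $d_H(A_K,K)\le\delta$ by construction.

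For completeness, given a $d_H$-Cauchy sequence $K_n$, I would exhibit the limit explicitly as
\[
K=\bigcap_{N\ge 1}\overline{\bigcup_{n\ge N}K_n},
\]
a nested intersection of closed, hence compact, subsets of $B$, so $K$ is compact and, by the finite intersection property, non-empty. The substantive step is to check $d_H(K_n,K)\to 0$: after relabelling a subsequence so that $d_H(K_n,K_{n+1})<2^{-n}$, the inclusion $K\subseteq\{z:\dist(z,K_n)\le 2^{-n+1}\}$ follows from a telescoping estimate, while the reverse inclusion $K_n\subseteq\{z:\dist(z,K)\le 2^{-n+1}\}$ comes from the standard trick of building, from any $x\in K_n$, a Cauchy sequence $x_j\in K_j$ with $x_n=x$ and $\dist(x_j,x_{j+1})<2^{-j}$, whose limit lies in every tail closure and hence in $K$. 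A Cauchy sequence with a convergent subsequence converges, so $K_n\to K$ in $d_H$.

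The main obstacle is precisely this completeness verification — promoting the Cauchy data into a genuinely convergent sequence of points that lands in the candidate limit $K$ — which is the slightly delicate but entirely standard core of the Hausdorff-metric machinery; the rest (total boundedness of $B$, and ``complete $+$ totally bounded $\Rightarrow$ compact'') is soft. A self-contained alternative that sidesteps the abstract framework is a direct diagonal extraction: for each $m$ record the finite datum $A^{(m)}_n=\{c\in F_{1/m}:\dist(c,K_n)\le 1/m\}\subseteq F_{1/m}$; finiteness of each net $F_{1/m}$ allows a diagonal subsequence along which every $A^{(m)}_{n_k}$ is eventually constant, such a subsequence is readily checked to be $d_H$-Cauchy, and the nested-intersection formula above again supplies the limit.
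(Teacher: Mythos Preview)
Your argument is correct: this is the standard Blaschke selection principle, and your proof via ``complete $+$ totally bounded $\Rightarrow$ compact'' for $(\Cal{K}(B),d_H)$ is the classical one, with the details handled properly. The paper does not give its own proof of this statement at all --- it is quoted verbatim as Theorem 2.2.25 of \cite{HP} and used as a black box --- so there is nothing to compare; you have supplied a full self-contained proof where the paper simply cites the literature.
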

Although the nodal sets are not stable under perturbation (in the Hausdorff sense) as pointed above, we will prove below that certain nodal configurations of the first nodal set remain stable under small enough perturbations. Before moving forward, let us look at a celebrated conjecture of Payne which is related to the topology type of the first nodal set for bounded planar domains. A substantial portion of our discussion in this paper will revolve around this conjecture. The conjecture states the following: 
\begin{conj}[Conjecture 5, \cite{P}]
For a bounded domain $\Omega\subset \RR^2$, the second eigenfunction of the Laplacian with Dirichlet boundary condition 
does not have a closed nodal line.   
\end{conj}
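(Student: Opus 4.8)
\medskip
\noindent\textbf{A proof proposal.} This is Payne's conjecture in full generality, so let me be candid: I will lay out the natural line of attack, pinpoint the step that resists, and note that the resistance is essential --- for multiply connected $\Omega$ the conclusion actually fails (see \cite{HHN}), so the honest target of the strategy below is the class of simply connected (in particular, convex) planar domains. \emph{Setup.} Argue by contradiction, assuming $\NNN(\varphi_2)$ does not touch $\partial\Omega$; then $\NNN(\varphi_2)$ is a compact subset of the open set $\Omega$, i.e.\ a closed nodal line. By Courant's nodal domain theorem and the remarks above, $\varphi_2$ has exactly two nodal domains, so $\NNN(\varphi_2)$ separates $\Omega$ into an inner piece $\Omega_+$ with $\overline{\Omega_+}\cap\partial\Omega=\emptyset$ and an outer piece $\Omega_-$ with $\partial\Omega\subset\overline{\Omega_-}$; normalize $\varphi_2>0$ on $\Omega_+$ and $\varphi_2<0$ on $\Omega_-$. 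On each $\Omega_\pm$, $|\varphi_2|$ is a non sign-changing Dirichlet eigenfunction, hence a ground state by the variational characterization (Theorem \ref{thm: Rayleigh chracterization} and the remarks), so $\lambda_1(\Omega_\pm)=\lambda_2(\Omega)$; and Hopf's lemma on $\partial\Omega$ shows $\partial_\eta\varphi_2$ has a strict constant sign there.

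\emph{Global step (after Lin).} The plan is to promote ``$\partial_\eta\varphi_2$ has constant sign on $\partial\Omega$'' to ``$\lambda_2(\Omega)$ is simple''. One feeds the directional derivatives $w_e:=\partial_e\varphi_2$, which solve $-\Delta w_e=\lambda_2 w_e$ in $\Omega$ (without the boundary condition), into the eigenspace of $\lambda_2$ and analyzes the admissible nodal configurations of genuine eigenfunctions in that space, in the spirit of the detachment/non-intersection phenomena recalled in the Appendix of this paper and in \cite{Lin}; the presence of a \emph{closed} nodal line for $\varphi_2$ forces enough rigidity to pin the multiplicity to $1$. Granting simplicity, write $\varphi_1>0$ for the ground state and set $v:=\varphi_2/\varphi_1$. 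Then $v$ solves the weighted divergence-form equation $-\div(\varphi_1^2\nabla v)=(\lambda_2-\lambda_1)\varphi_1^2 v$ in $\Omega$, and since $\partial_\eta\varphi_1\neq0$ and $\partial_\eta\varphi_2\neq0$ on $\partial\Omega$, $v$ extends to $\overline\Omega$ with $v=\partial_\eta\varphi_2/\partial_\eta\varphi_1$ of strict constant sign on $\partial\Omega$; thus its positivity set $\{v>0\}=\Omega_+$ is a compactly contained ``bubble''.

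\emph{Geometric step and the obstruction.} One now wants to rule out such an interior bubble, and here one must inject the global shape of $\Omega$: for convex $\Omega$, $\log\varphi_1$ is concave (Brascamp--Lieb), rigidifying the weight $\varphi_1^2$ and the level-set geometry, after which one runs a moving-plane/sliding argument in the style of Melas \cite{M}; or, matching the viewpoint of this paper, one deforms $\Omega$ through convex domains to a disk (using path-connectedness of the relevant moduli space) while tracking the persistence of simplicity and of the closed nodal line, and contradicts the known spectral picture of the disk. \textbf{This geometric step is precisely the obstacle, and it is exactly the point at which the statement as literally worded cannot be pushed through:} the weighted equation for $v$ has zeroth-order coefficient $(\lambda_2-\lambda_1)\varphi_1^2>0$ of the wrong sign for the maximum principle, so constant-sign boundary data alone does \emph{not} forbid an interior sign change of $v$; some structural hypothesis on $\Omega$ (convexity, or at least simple connectivity together with the symmetry/perturbation machinery of Sections \ref{sec: stability}--\ref{sec:perturbation}) is indispensable, and without it the conjecture genuinely breaks (\cite{HHN}). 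Hence the realistic deliverable of this plan is Payne's conjecture for convex --- conjecturally, all simply connected --- planar domains, obtained by combining the local angle estimate (Theorem \ref{thm:angle_estimate_high_dim}) with Lin's global simplicity step and a moduli-space connectedness argument; even the global step is, at present, available only for convex domains.
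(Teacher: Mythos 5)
You were asked to prove what the paper records, verbatim, as Payne's Conjecture 5: the paper offers no proof of it, so there is nothing internal to compare your attempt against, and — as you correctly observe — the statement is actually \emph{false} in the generality in which it is worded. The paper itself recounts the counterexample of \cite{HHN} on a bounded (multiply connected) planar domain whose second Dirichlet eigenfunction has a closed nodal line, together with Fournais's higher-dimensional example and the paper's own simply connected variant for $n\geq 3$. So declining to produce a complete proof is the right outcome here, and your diagnosis of the landscape agrees with the paper's: the positive results (Payne, Lin, Jerison, Melas, Alessandrini, Kiwan) all need convexity or strong symmetry; the ``global'' ingredient is Lin's theorem that non-sign-changing Neumann data of $\varphi_2$ on a convex planar domain forces $\lambda_2$ to be simple; and the paper explicitly poses path-connectedness of the moduli space of perturbations as the missing link for pushing beyond convexity.

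Two small corrections to your sketch, neither of which changes your conclusion. First, Lin's simplicity step is not run by feeding the directional derivatives $\partial_e\varphi_2$ into the eigenspace; it is an integration-by-parts identity against a rotational vector field applied to two putatively independent second eigenfunctions, one of which is arranged (by taking linear combinations) to meet $\partial\Omega$ in a prescribed way — exactly the mechanism the paper reproduces in Theorem \ref{thm:mutliplicity_second_eigen} and Remark \ref{rem:one_pt_intersection}. Second, Melas's geometric step is not a moving-plane or sliding argument: it is a count of boundary zeros of $\partial_\eta\varphi_2$ combined with the local structure of the nodal set where it meets $\partial\Omega$ (the opening-angle constraints that the paper generalises in Theorems \ref{thm:ICC} and \ref{thm:angle_estimate_high_dim}), together with Lin's simplicity result. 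Your observation that the zeroth-order term $(\lambda_2-\lambda_1)\varphi_1^2>0$ in the equation for $v=\varphi_2/\varphi_1$ has the wrong sign for a naive maximum principle is accurate and is indeed one honest way to see why constant-sign boundary data alone cannot close the argument.
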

We will refer this conjecture as the Payne conjecture or the nodal line conjecture throughout our text. The second nodal domains represent a 2-partition of $\Omega$ minimising the spectral energy, that is 
$$\lambda_2(\Omega)=\inf\{\max\{\lambda_1(\Omega_1), \lambda_1(\Omega_2)\}:\Omega_1, \Omega_2\subset\Omega \text{ open, } \Omega_1\cap \Omega_2=\emptyset,\; \overline{\Omega_1\cup \Omega_2}=\overline{\Omega}\}$$
where the infimum is attained only when $\Omega_1, \Omega_2$ are the nodal domains of some $\varphi_2$. One idea behind the above conjecture is that it would be suboptimal from the perspective of energy minimisation to have one nodal domain concentrated somewhere in the interior of $\Omega$, with the other occupying its boundary. Liboff, in \cite{Li}, conjectured that the nodal surface of the first excited state of a three-dimensional convex domain intersects its boundary in a single simple closed curve. The conjecture is analogous to that of Payne in dimension 3.

\subsection{Some previous work on topology of first nodal sets} 
Now, let us look at some progress made on the above conjecture in a chronological order. 

\cite{P1} addressed the conjecture provided the domain $\Omega \subseteq \RR^2$ is symmetric with respect to one line and convex with respect to the direction vertical to this line. \cite{Lin} following a similar approach proved the conjecture provided the domain $\Omega\subseteq\RR^2$ is smooth, convex and invariant under a rotation with angle $2\pi p/q$, where $p$ and $q$ are positive integers. Both the proofs rely heavily on the symmetry of the domain. In \cite{LN}, Lin and Ni provided a counter-example of the nodal domain conjecture for the Dirichlet Schrodinger eigenvalue problem. For each $n\geq 2$, they construct a radially symmetric potential $V$  in  a ball so that the nodal domain conjecture is violated.  
In  1991, Jerison proved in \cite{J} that the conjecture is true for long thin convex sets in $\RR^2$. More specifically, there is an absolute constant $C$ such that given a convex domain $\Omega\subset\RR^2$ with $\frac{\diam(\Omega)}{\inrad(\Omega)}\geq C$, we have that the nodal set corresponding to the second eigenfunction 
intersects the boundary at exactly two points. Here $\inrad(\Omega)$ denotes the radius of the largest ball that can be inscribed in $\Omega$ and $\diam(\Omega)$ denotes the diameter.  In the following year, Melas relaxed the condition of ``long and thin'' in \cite{M} and proved the conjecture for any bounded convex domain $\Omega$ in $\RR^2$ with $C^{\infty}$ boundary. Alessandrini further relaxed the $C^\infty$-boundary condition and proved the conjecture for general convex planar domains in \cite{A}. 

To the extent of our knowledge, M. Hoffmann-Ostenhof, T. Hoffmann-Ostenhof, and Nadirashvili in \cite{HHN} provided the first counter-example of the Payne conjecture in $\RR^2$ for the case of Dirichlet Laplacian. 
We outline the basic idea of their construction in Subsection \ref{subsec:counter_Four}  below. We mention in passing that boundedness of the domain is crucial for results of the Payne type (see \cite{FK1}).

Regarding the topological properties of the first nodal set in higher dimensions, Jerison \cite{J2} extended his result for long and thin convex sets in higher dimensions 
(see also related follow up work in \cite{Da, FK2, KT}). 
In \cite{F}, Fournais extended the result of \cite{HHN} in higher dimensions and proved that the first nodal set does not intersect the boundary (we outline his construction in Subsection \ref{subsec:counter_Four} below). The domain constructed by Fournais was not topologically simple, which was later addressed in \cite{Ke}. 
Recently Kiwan, in \cite{Ki}, proved the nodal domain conjecture for domains which are of the form $A\setminus B$ where $A$ and $B$ have sufficient symmetry and convexity.


\subsection{Payne property and perturbation of domains}

Let $\varphi$ be a Dirichlet eigenfunction for a bounded domain $\Omega\subset\RR^n$ with smooth boundary. For any $p\in \pa \Omega$, $p\in \NNN_{\varphi}$ if and only if $\frac{\pa \varphi}{\pa \eta}=0$, where $\eta$ denotes the outward normal at $p$. The proof for dimension $n = 2$ is covered in Lemma 1.2 of \cite{Lin}, and one can check that a similar proof is true in higher dimensions as well. Let $x = (x_1,\dots, x_n) =: (x', x_n)$, and let the domain $\Omega$ be tangent to the $x'$-hyperplane at the origin. If $\frac{\pa \varphi}{\pa \eta}(0) \neq 0$, then by the implicit function theorem, $x_n$ is uniquely solvable as a function of $x'$ in a neighbourhood of $0$, which means that the only zeros of $\varphi$ near the origin occur on $\pa\Omega$.  The converse case is addressed by a variant of the Hopf boundary principle (see Lemma H of \cite{GNN}). 

Consider any Dirichlet eigenfunction $\varphi$ whose nodal set $\NNN_\varphi$ divides $\Omega$ into exactly two nodal domains. In particular, any first nodal set (nodal set corresponding to some second eigenfunction) always divides the domain $\Omega$ into exactly two components.  Then we have the following three cases.
\begin{enumerate}
    \item[(SP)] If $\displaystyle\frac{\pa \varphi}{\pa \eta}$ changes sign on the boundary, then $\overline{\NNN}_\varphi\cap \pa \Omega\neq \emptyset$ and $\overline{\NNN}_\varphi$ divides at least one component of $\pa \Omega$ into exactly two components.

    \item[(WP)] If $\displaystyle \frac{\pa \varphi}{\pa \eta}\geq 0$ (without loss of generality) on the boundary with at least one point $x\in\pa \Omega$ such that $\displaystyle \frac{\pa \varphi}{\pa \eta}(x)=0$, then $\overline{\NNN}_\varphi\cap \pa \Omega\neq \emptyset$ but $\pa \Omega \setminus \overline{\NNN}_\varphi$ has same number of connected components as $\pa \Omega$. 
    
    \item[(NP)] If $\displaystyle \frac{\pa \varphi}{\pa \eta}> 0$ (without loss of generality) on the boundary, then $\overline{\NNN}_\varphi\cap \pa \Omega=\emptyset$.
\end{enumerate}
\begin{defi}\label{def:Payne_prop}
 We say that any second eigenfunction $\varphi$ satisfies the Payne property if the nodal set of $\varphi$ intersects the boundary $\pa \Omega$, that is either (SP) or (WP) is true. We say that $\varphi$ satisfies the strong Payne property if only (SP) is true. Also, we say that $\Omega$ satisfies the (strong) Payne property if every second eigenfunction $\varphi$ of $\Omega$ satisfies the (strong) Payne property. 
\end{defi}

As a consequence, we have the following
\begin{proposition}\label{prop: SP/NP open condition}
Satisfying the property (SP) or (NP) is an open condition. 
\end{proposition}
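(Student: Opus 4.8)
The plan is to realize (SP) and (NP) as open conditions by combining the characterization $p \in \NNN_\varphi \cap \pa\Omega \iff \pa_\eta\varphi(p) = 0$ with the stability lemmas of Section \ref{sec: stability}, most crucially Lemma \ref{lem:nod_set_eventually} and Lemma \ref{lem:lim_M_1}, together with the $C^1$-convergence (in fact $C^\infty$, up to the boundary) of Dirichlet eigenfunctions under the admissible one-parameter perturbations $\Omega_t$ of $\Omega_{t_0}$. So suppose $\Omega_{t_0}$ (with simple spectrum, $\lambda_k$ the relevant second eigenvalue) satisfies (NP): then $\pa_\eta \varphi_{k,t_0} > 0$ (after fixing orientation) on all of $\pa\Omega_{t_0}$, and in particular the nodal set $\NNN(\varphi_{k,t_0})$ is a compact subset of the open set $\tilde\Omega$, the unperturbed part. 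By Lemma \ref{lem:nod_set_eventually}, for $t$ close to $t_0$ the nodal set $\NNN(\varphi_{k,t})$ is also contained in $\tilde\Omega$, hence disjoint from $\pa\Omega_t$; equivalently $\pa_\eta\varphi_{k,t}$ never vanishes on $\pa\Omega_t$. Since $\pa\Omega_t$ is connected-component-wise the boundary of $\Omega_t$ and $\pa_\eta\varphi_{k,t} \to \pa_\eta\varphi_{k,t_0}$ uniformly on the common boundary part (and the perturbed part is where the positivity is not in question because the nodal set stays away), a sign argument gives $\pa_\eta\varphi_{k,t} > 0$ throughout, i.e. $\Omega_t$ satisfies (NP). Thus (NP) is an open condition.

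For (SP), the argument is a mirror image but uses the fact that a \emph{strict sign change} of $\pa_\eta\varphi_{k,t_0}$ along $\pa\Omega_{t_0}$ is stable. Concretely, if $\Omega_{t_0}$ satisfies (SP), then $\pa_\eta\varphi_{k,t_0}$ is strictly positive at some boundary point $p_+$ and strictly negative at some boundary point $p_-$, both necessarily lying in the unperturbed part $\tilde\Omega \cap \pa\Omega_{t_0}$ once we note that the perturbations happen away from the $k$-th nodal set (so $\pa_\eta\varphi$ is nonzero near the perturbed region, and in fact we may choose $p_\pm$ in the region where convergence holds). By $C^0$-convergence of the normal derivatives on $\tilde\Omega$, for $t$ near $t_0$ we still have $\pa_\eta\varphi_{k,t}(p_+) > 0$ and $\pa_\eta\varphi_{k,t}(p_-) < 0$, so $\pa_\eta\varphi_{k,t}$ changes sign on $\pa\Omega_t$; by the boundary characterization this forces $\overline{\NNN}_{\varphi_{k,t}} \cap \pa\Omega_t \neq \emptyset$ and, since the nodal set separates $\Omega_t$ into exactly two nodal domains (true for any second eigenfunction), the closed nodal set must separate a boundary component into two pieces. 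Hence $\Omega_t$ satisfies (SP), and (SP) is open.

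The main obstacle — and the point requiring genuine care rather than routine bookkeeping — is the passage from ``$\pa_\eta\varphi$ changes sign on $\pa\Omega_t$'' to ``$\overline{\NNN}_{\varphi}$ divides a boundary component into exactly two pieces,'' i.e. ruling out the intermediate configuration (WP) in the limit and ruling out a more degenerate topological picture where the nodal set meets the boundary in more than the expected way. This is precisely where one invokes that a second eigenfunction has exactly two nodal domains (Courant plus the simplicity remarks in Section \ref{sec: stability}): a connected nodal set that touches $\pa\Omega_t$ and bounds two nodal domains, together with sign change of $\pa_\eta\varphi$, forces exactly the (SP) dichotomy. One should also be careful that $C^0$-convergence of $\varphi_{k,t}$ on $\tilde\Omega$ upgrades to $C^1$-convergence up to the relevant boundary portion so that the normal derivatives genuinely converge — this follows from elliptic boundary regularity and the fact that on the unperturbed part the domains agree, so interior/boundary Schauder estimates apply uniformly. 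Modulo these points, the proof is a direct consequence of Lemmas \ref{lem:lim_M_1} and \ref{lem:nod_set_eventually} and the eigenvalue/eigenfunction convergence hypotheses.
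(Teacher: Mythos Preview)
Your argument is correct but follows a genuinely different route from the paper's. You work directly with the Neumann data: for (NP) you use Lemma~\ref{lem:nod_set_eventually} (as does the paper), and for (SP) you locate two boundary points $p_+, p_-$ in the unperturbed region where $\pa_\eta\varphi_{k,t_0}$ has opposite strict signs, then invoke $C^1$-convergence up to the common boundary to conclude that $\pa_\eta\varphi_{k,t}$ still changes sign there. The paper instead argues by contradiction using Hausdorff convergence of nodal sets (Theorem~\ref{thm: Henrot_conv_theo} and Lemma~\ref{lem:lim_M_1}): if (SP) failed along a subsequence, one of the two nodal domains of $\varphi_{k,\epsilon_k}$ would be trapped in an arbitrarily thin $\delta$-tube around $\NNN(\varphi_{k,t_0})$, violating the Faber--Krahn/inner-radius lower bound.

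The trade-off is this: your approach is conceptually more direct once one has convergence of the normal derivatives, but that convergence is \emph{not} part of the standing hypotheses in Section~\ref{sec: stability} (only $C^\infty$-convergence on the open set $\tilde{\Omega}$ is assumed), so you are importing an extra ingredient via boundary Schauder estimates. The paper's Faber--Krahn argument stays entirely within the $C^0$-interior framework already set up, and as a bonus yields the diffeomorphism statement for the nodal sets via Thom's isotopy theorem. Your handling of the passage from ``sign change of $\pa_\eta\varphi$'' to ``(SP) holds'' is fine --- that is precisely the definition of (SP) --- so the paragraph you flag as the ``main obstacle'' is not actually where the work lies; the real point of care in your approach is the boundary-regularity upgrade, which you correctly identify but should justify more carefully.
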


\begin{proof}
By Lemma \ref{lem:nod_set_eventually}, we know that $\NNN(\varphi_{2,\epsilon})$ is eventually inside $\tilde{\Omega} \subset \Omega_{t_0}$. 
By precompactness in Hausdorff metric as explained in Theorem \ref{thm: Henrot_conv_theo}, 
one can extract a subsequence called  $\NNN(\varphi_{2,\epsilon_i})$, 
which converges to a set $X \subset \Omega_{t_0}$ 
in the Hausdorff metric. By Lemma \ref{lem:lim_M_1}, we already know that $X \subset \NNN(\varphi_{2,t_0})$. 
It follows that for $i$ large enough, $\NNN(\varphi_{2,\epsilon_i})$ is within any $\delta$-tubular neighbourhood of $\NNN(\varphi_{2,t_0})$. 

 \begin{figure}[ht]
\centering
\includegraphics[scale=0.16]{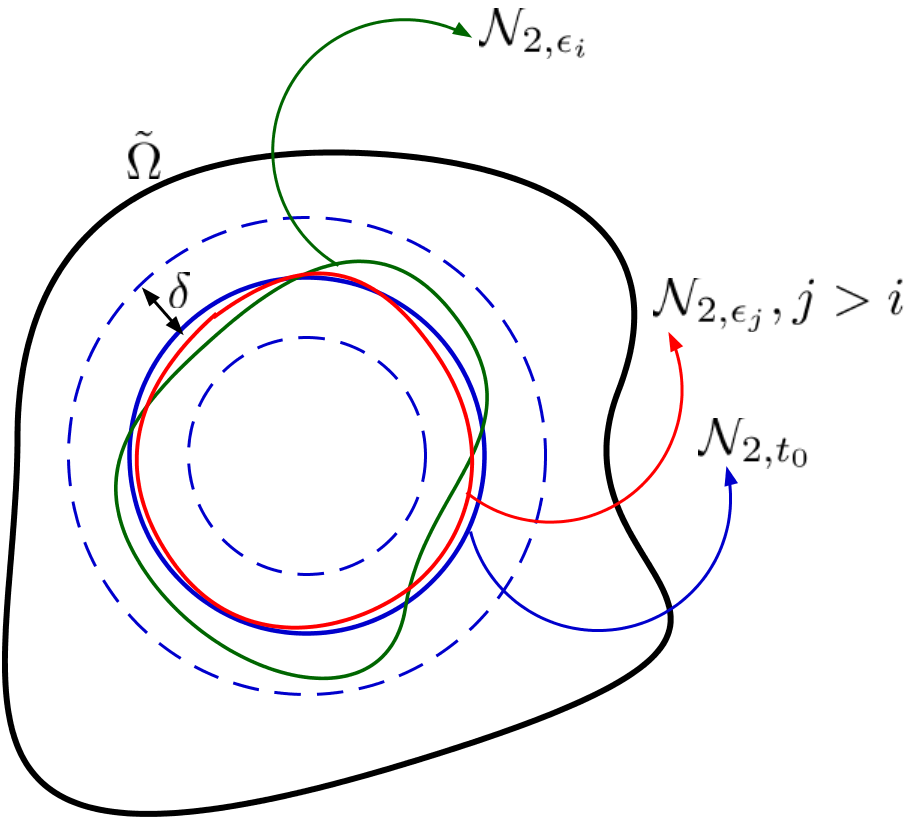}
\caption{Property (NP) is an  open condition}
\label{fig:NP condition is open}
\end{figure}
     

Let $\NNN(\varphi_{2,t_0})$ satisfy (NP). Then $\NNN(\varphi_{2,t_0})$ does not intersect the boundary $\pa \tilde{\Omega}$. For small enough $\delta$, the $\delta$-tubular neighbourhood of $\NNN(\varphi_{2,t_0})$ does not intersect $\pa \tilde{\Omega}$. This implies that given such a $\delta$, for large enough $i$, $\NNN(\varphi_{2,\epsilon_i})$ does not intersect $\pa \tilde{\Omega}$. More specifically, $\NNN(\varphi_{2,\epsilon_i})\cap \Omega_{\epsilon_i}=\emptyset$.

Now  assume that $\NNN(\varphi_{2,t_0})$ satisfies (SP). If possible, let (SP) is not an open condition, that is there exists a subsequence $\{k\}\subset \{i\}$ such that $\NNN(\varphi_{2,\epsilon_k})$ does not satisfy (SP). This means that one of nodal domains of the second Dirichlet eigenfunction of $\Omega_{\epsilon_k}$ is within any $\delta$-tubular neighbourhood of $\NNN(\varphi_{2,t_0})$ and the volume of such a tubular neighbourhood is going to $0$ as $\delta \searrow 0$. 
 \begin{figure}[ht]
\centering
\includegraphics[scale=0.17]{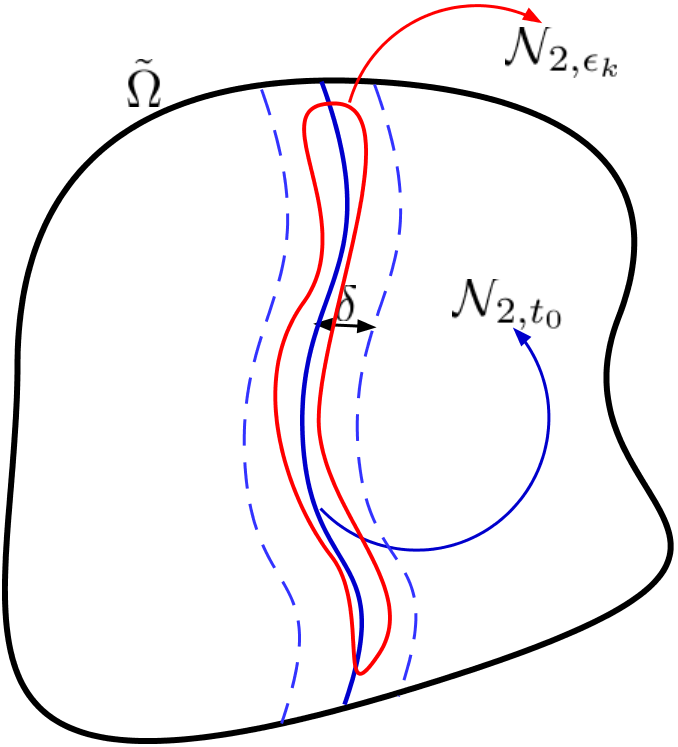}
\caption{If property (SP) is not an  open condition}
\label{fig:SP condition is open}
\end{figure}
This will contradict the Faber-Krahn inequality (or the inner radius estimate for the second nodal domain of $\Omega_{\epsilon_k}$), and imply that for large enough $i$, $\NNN(\varphi_{2,\epsilon_i})$ intersects the boundary. Moreover, if the first nodal set is a submanifold, then using Thom's isotopy theorem (see \cite{Ab}, Section 20.2) one can conclude that for large enough $i$, $\NNN(\varphi_{2,\epsilon_i})$ is diffeomorphic to $\NNN(\varphi_{2,t_0})$. 
\end{proof}

In essence, the goal of the proof described above is two fold: to prove the stability of the boundary Neumann data in case that the limiting domain satisfies (SP) or (NP), and to show that the first nodal set of the perturbed domains cannot perturb (in Hausdorff sense) too much from the limiting nodal set. 
More precisely, for sufficiently small perturbations the first nodal set of the perturbed domains should lie in a $\delta$-neighbourhood ($\delta$ however small) of the first nodal set of the limiting domain given that the limiting nodal set satisfies (NP) or (SP). We finish this section with the following


\begin{remark}\label{thm:ball_pert_Payne}
Let $\Omega \subset \RR^n$ be a domain which can be realised as a one-parameter family of real-analytic perturbations of the ball. Let the unit ball be denoted by $\Omega_0$ 
and $\Omega_1 = \Omega$. Then $\{ t \in [0, 1] : \Omega_t \text{ satisfies 
(SP)} \}$ is an open set.
\end{remark}

\section{Applications: stability on low energy nodal sets}
 
In \cite{MS}, we proved that certain simply-connected perturbations of convex planar domains and perforated domains with sufficiently small perforations satisfy the nodal line conjecture. In what follows, we continue that discussion with several other classes of domains. Moreover, we also look at several other  applications of our stability results and look at certain ``perturbation resistant'' feature of angle estimates of the nodal domains in higher dimensions.

\subsection{Payne property of domains with narrow connector}\label{subsec: dumbbells}
As is already pointed out, by the work in \cite{M}, the strong Payne property is known to hold on convex domains. By further work in \cite{MS}, it is also known to hold on domains obtained from small perturbations of (strictly) convex domains. Somehow a natural approach would be to investigate the validity of the conjecture on domains which are in some sense both ``very far'' from being convex, or being small perturbations thereof. A natural class of such domains would be the so-called dumbbell domains.

We consider dumbbells as constructed in \cite{Ji}. Consider two bounded disjoint open sets $\Omega_1$ and $\Omega_2$ in $\RR^n, n\geq 2$ with smooth boundary 
such that the boundary of each domain has a flat region. More precisely, for some positive constant $\xi>0$,

\begin{equation*}
     \overline{\Omega}_1 \cap \{(x_1, x')\in \RR\times \RR^{n-1}: x_1\geq -1; |x'|<3\xi \}= \{(-1, x')\in \pa \Omega_1: |x'|<3\xi \},
\end{equation*}
and 
\begin{equation*}
    \overline{\Omega}_2 \cap \{(x_1, x')\in \RR\times \RR^{n-1}: x_1\leq 1; |x'|<3\xi \}= \{(1, x')\in \pa \Omega_2: |x'|<3\xi \}. 
\end{equation*}

Let $Q$ be a line segment joining the flat segments (as described above) of $\pa\Omega_1$ and $\pa\Omega_2$. For some small enough fixed $\epsilon>0$, consider the dumbbell domain $\Omega_\epsilon$  obtained by joining $\Omega_1$ and $\Omega_2$ with a connector $Q_\epsilon$ denoted by
$$\Omega_\epsilon:= \Omega_1\cup \Omega_2 \cup Q_\epsilon.$$
Here
$$Q_\epsilon= Q_1(\epsilon)\cup L(\epsilon)\cup Q_2(\epsilon)$$
is given by
\begin{align*}
     Q_1(\epsilon) &= \left\{(x_1, x')\in \RR \times \RR^{n-1}:  -1 \leq x_1 \leq -1+2\epsilon; |x'|<\epsilon \rho\left(\frac{-1-x_1}{\epsilon} \right) \right\},\\
    Q_2(\epsilon)  &= \left\{(x_1, x')\in \RR \times \RR^{n-1}:  1-2\epsilon \leq x_1 \leq 1; |x'|<\epsilon \rho\left(\frac{x_1-1}{\epsilon} \right) \right\},\\
     L(\epsilon) &=  \left\{(x_1, x')\in \RR \times \RR^{n-1}:  -1+2\epsilon \leq x_1 \leq 1-2\epsilon ; |x'|<\epsilon  \right\},
\end{align*}
where $\rho\in C^\infty((-2, 0))\cap C^0((-2, 0])$ is a positive bump function satisfying
$\rho(0)=2$ and $\rho(q)= 1$ for  $q\in(-2, -1)$.
 \begin{figure}[ht]
\centering
\includegraphics[height=4cm]{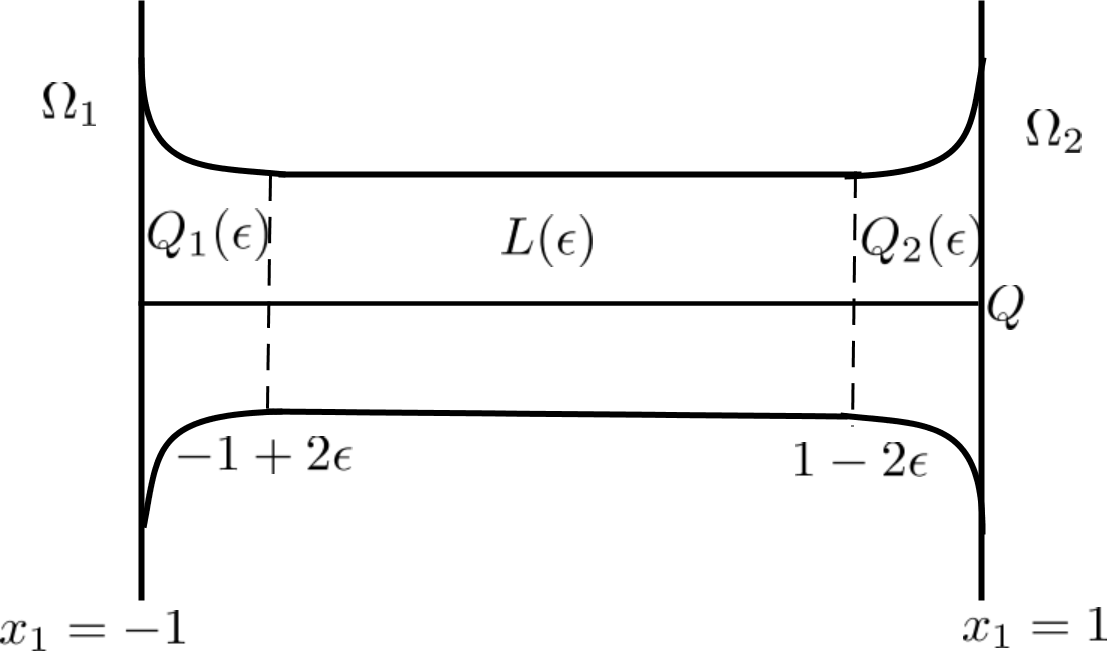}
\caption{Thin connector $Q_\epsilon$}
\end{figure}


Considering the Dirichlet boundary condition on $\Omega_\epsilon$, as pointed out in Chapter 7 of \cite{GN} (see also \cite{D} and Chapter 2 of \cite{He}),
$$\lambda_k(\Omega_\epsilon)\to \lambda_k(\Omega_1\cup \Omega_2) \quad \text{ as } \epsilon\to 0.$$
Here, $\lambda_k(\Omega_1\cup \Omega_2)$ denotes the $k$-th element after rearranging the Dirichlet eigenvalues of $\Omega_1$ and $\Omega_2$ non-decreasingly.  Let $\Lambda_i$  denote the spectrum of $\Omega_i$ ($i=1, 2$),  
and $\varphi_k^{\Omega_i}$ denote the $k$-th Dirichlet eigenfunction of $\Omega_i$ corresponding to the eigenvalue $\lambda_k^{\Omega_i}$. 
 If $\Lambda_1\cap \Lambda_2= \emptyset$ then each eigenfunction $\varphi_{k,\epsilon}$ on the domain $\Omega_\epsilon$ approaches in $L^2$-norm  an eigenfunction $\varphi_{k,0} := \varphi_{k'}^{\Omega_i}$ (for some $i=1,2$ and $k'\leq k$) 
which is fully localised in one subdomain $\Omega_i$ and zero in the other. The fact that the spectra of $\Omega_1$ and $\Omega_2$ do not intersect is important for localisation of the eigenfunctions to exactly one subdomain $\Omega_i$. 

We are interested in looking at the nodal sets of the second eigenfunctions of these dumbbell domains with narrow connectors. 
Without loss of generality, let the eigenfunctions $\varphi_{2, \epsilon_i}$ localise (in the sense described above) on $\Omega_1$ as $\epsilon_i\to 0$. Then, we can rearrange $\Lambda_1 \sqcup \Lambda_2$ in the following two ways.

Case I: $\lambda_1^{\Omega_2}< \lambda_1^{\Omega_1}< \lambda_j^{\Omega_i} \leq \cdots$ for some $i=1, 2$, and  $j \geq 2$;

Case II: $\lambda_1^{\Omega_1}<\lambda_2^{\Omega_1}< \lambda_j^{\Omega_i}\leq \cdots$, for some $i=1, 2$, and $j \in \NN$. 

In general, we label the above arrangement as $\lambda_{1, 0} \leq \lambda_{2, 0} \leq \dots$. Now, 
redefine $\varphi_{2, \epsilon_i}, \varphi_{2,0}$ on $\RR^n$ as
\begin{equation*}
    \varphi_{2, \epsilon_i} = \begin{cases}
        & \varphi_{2, \epsilon_i}  \quad \text{ on } \Omega_{\epsilon_i}, \\
        & 0, \qquad \text{otherwise}.
    \end{cases}
    \quad \text{and} \quad 
    \varphi_{2,0} = \begin{cases}
        & \varphi_1^{\Omega_1}~(\text{or, } \varphi_2^{\Omega_1 } \text{ for Case II}) \quad \text{ on } \Omega_1,\\
        & 0, \qquad\qquad\qquad\qquad\qquad \text{ otherwise}.
    \end{cases}
\end{equation*}

Since we have assumed that the second eigenfunction localises on $\Omega_1$, we have that $$\|\varphi_{2,\epsilon_i}-\varphi_{2,0}\|_{L^2(\RR^n)}\to 0 \text{ as } \epsilon_i\to 0.$$

We now begin proving Theorem \ref{thm: narrow connectors} which deals with Case II.

\begin{proof}

Consider a smooth hypersurface $\Gamma'\subset \overline{\Omega}_1$ such that there exists a $\delta$-tubular neighbourhood of $\Gamma'$ (denoted by $T_{\Gamma',\delta}$) contained in $\Omega_1$ and away from the perturbation. Moreover, $\Gamma'$ is chosen in such a way that 
\begin{itemize}
    \item $\Gamma'$ divides every $\Omega_\epsilon$ into exactly two components with one of the components being $\Omega'\subset \Omega_1$ (see Figure \ref{fig: Construction of the dumbbell domains});
    \item $\pa \Omega_1$ is not a subset of the closure of either components (in other words, we are not considering $\Gamma'$ to be a closed smooth hypersurface completely contained inside $\Omega_1$). 
\end{itemize}

Now, define $\Omega= \Omega'\cup T_{\Gamma', \delta}$, and $\Gamma = \Gamma'+\delta$ (the outer boundary of $T_{\Gamma', \delta}$ with respect to $\Omega'$). Note that the boundary of $\Omega$ can be divided into two parts, namely $\Gamma$ and $\Gamma^*:= \overline{\Omega}\cap {\pa\Omega}_1$. 


We want to prove that $\varphi_{2,\epsilon}\to \varphi_{2,0}$ in $C^0(\Omega')$. We divide $\Omega'$ into two regions, $\Omega_1'$ and $\Omega_2'$, such that $\overline{\Omega'}=\overline{\Omega_1'}\cup \overline{\Omega_2'}$. Here,
$\Omega_1':= \{x\in \Omega: \dist(x, \pa \Omega)>\delta\}$, the ``inner $\delta$-shell of $\Omega_1$, and $\Omega_2':= \Omega'\setminus \overline{\Omega_1'}$ (see Figure \ref{fig: Construction of the dumbbell domains}). 

 \begin{figure}[ht]
\centering
\includegraphics[height=4cm]{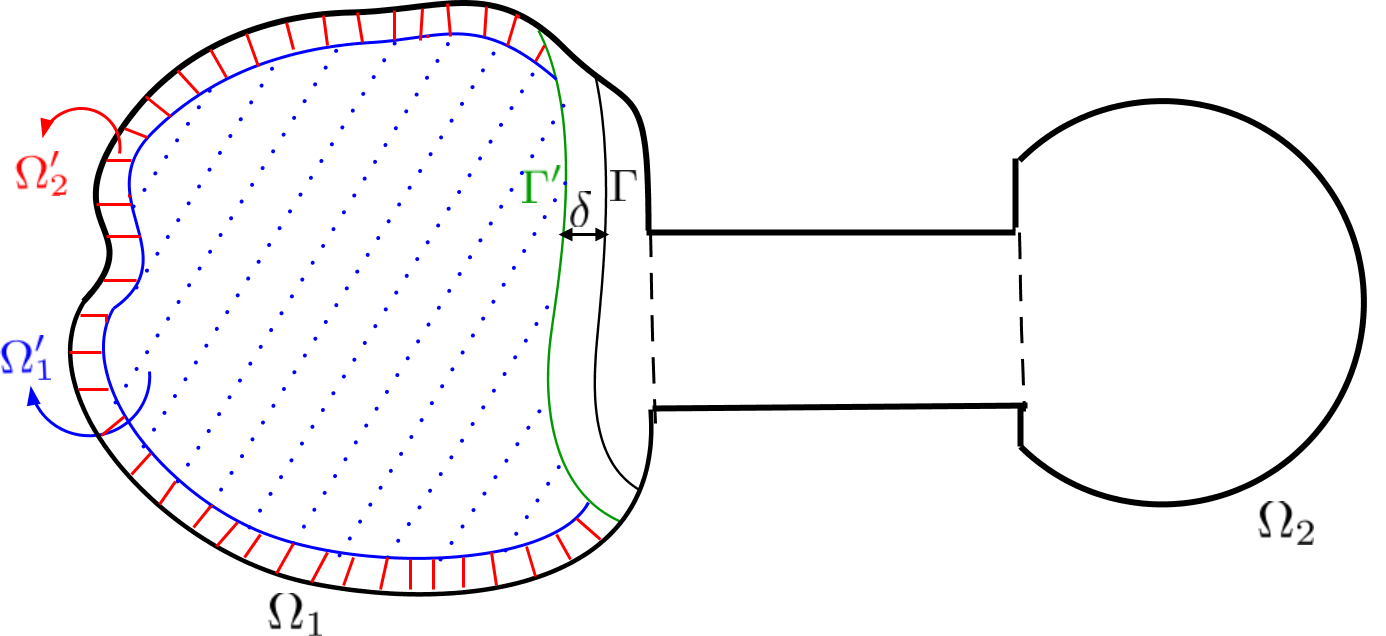}
\caption{$\Omega':=$ interior of $\overline{\Omega_1'}\cup \overline{\Omega_2'}$ }
\label{fig: Construction of the dumbbell domains}
\end{figure}

We first prove that $\varphi_{2,\epsilon}\to \varphi_{2,0}$ in $C^0(\Omega_1')$. Consider 
$$(\Delta+\lambda_{2,0})[\varphi_{2,\epsilon_i}-\varphi_{2,0}]=(\lambda_{2,0}-\lambda_{2,\epsilon_i})\varphi_{2,\epsilon_i} \text{ on } \Omega_1'$$
and 
$$(\Delta+\lambda_{2,\epsilon_i})\varphi_{2,\epsilon_i}=0 \text{ on } \Omega_{\epsilon_i}.$$

\allowdisplaybreaks

Observe that $\Omega_1'$ is compactly contained inside $\Omega \subset\Omega_1$. Now, applying Theorem 8.24 and Theorem 8.15 of \cite{GT} in the above two equations consecutively, we have that for some $q>n$ and $\nu>\sup\{\lambda_{2,0}, \lambda_{2,\epsilon_i}\}$,
\begin{align*}
    \|\varphi_{2,\epsilon_i}-\varphi_{2,0}\|_{L^\infty(\Omega_1')}  &\leq  C\left(\|\varphi_{2,\epsilon_i}-\varphi_{2,0}\|_{L^2(\Omega)}+ \|(\lambda_{2,0}-\lambda_{2,\epsilon_i})\varphi_{2,\epsilon_i} \|_{L^{q/2}(\Omega_1')}\right)\\
    &\leq  C\left(\|\varphi_{2,\epsilon_i}-\varphi_{2,0}\|_{L^2(\Omega)}+ C^*|(\lambda_{2,0}-\lambda_{2,\epsilon_i})|\cdot\|\varphi_{2,\epsilon_i} \|_{L^{\infty}(\Omega_1')}\right)\\
    &\leq  C\left(\|\varphi_{2,\epsilon_i}-\varphi_{2,0}\|_{L^2(\Omega)}+ C^*|(\lambda_{2,0}-\lambda_{2,\epsilon_i})|\cdot\|\varphi_{2,\epsilon_i} \|_{L^{\infty}(\Omega_{\epsilon_i})}\right)\\
    &\leq  C\left(\|\varphi_{2,\epsilon_i}-\varphi_{2,0}\|_{L^2(\Omega)}+ C'|\lambda_{2,0}-\lambda_{2,\epsilon_i}|\cdot\|\varphi_{2,\epsilon_i} \|_{L^{2}(\Omega_{\epsilon_i})}\right)\\
     &\leq  C\left(\|\varphi_{2,\epsilon_i}-\varphi_{2,0}\|_{L^2(\RR^n)}+ C'|\lambda_{2,0}-\lambda_{2,\epsilon_i}|\cdot\|\varphi_{2,\epsilon_i} \|_{L^{2}(\RR^n)}\right),
\end{align*}

where $C, C'$ depends on $q, \nu, \delta$ and $|\Omega_{\epsilon_i}|$. For each $i$, $|\Omega_{\epsilon_i}|$ is uniformly bounded which implies that the constants on the right are independent of $i$. 

Now using $\lambda_{2,\epsilon_i} \to \lambda_{2,0}$ and the fact that $\|\varphi_{2,\epsilon_i}-\varphi_{2,0}\|_{L^2(\RR^n)}\to 0$, we have that as $i\to \infty$ 
\begin{align*}
    \|\varphi_{2,\epsilon_i}-\varphi_{2,0}\|_{L^\infty(\Omega_1')}  \to 0.
\end{align*}

Now, considering the other part $\Omega_2'$, note that $\varphi_{2,\epsilon_i}, \varphi_{2,0}=0$ on $\pa \Omega_2'\cap \pa\Omega_1$. Now, using Theorem 1.1 of \cite{X}, we have that the supremum norm of the gradients of $\varphi_{2,\epsilon_i}, \varphi_{2,0}$ are bounded above uniformly, which in turn implies that $\varphi_{2,\epsilon_i}, \varphi_{2,0}=0$ is sufficiently close to 0. This combined with the above uniform convergence gives us our required $C^0$-convergence on $\Omega'$.


From our assumption, we have $\lambda_{2,0}=\lambda_2^{\Omega_1}$ with $\varphi_{2,\epsilon}|_{\Omega_1}\to \varphi_2^{\Omega_1}$ in $L^2(\Omega_1)$. 
Additionally, choose the hypersurface $\Gamma'\subset \Omega_1$ such that $\NNN(\varphi_2^{\Omega_1})$ lies in $\Omega'$. 

 If possible, let $\NNN(\varphi_{2,\epsilon_i})$ intersect the connector $Q_{\epsilon_i}$ for every  $\epsilon_i>0$, 
and $\NNN(\varphi_{2,\epsilon_i}) \cap \Gamma' = \{p_i\}\in \Omega_1$. Then there exists a subsequence $\{p_j\}\subset \{p_i\}$ and some $p\in \Omega_1$ such that $\{p_j\}\to p$, and from Lemma \ref{lem:lim_M_1}, we have that $p\in \NNN(\varphi_2^{\Omega_1})$. Recall that we have assumed that $Q_\epsilon$ is away from $\NNN(\varphi_2^{\Omega_i})$ ($i=1,2$), that is, $Q_\epsilon$ does not intersect $\NNN(\varphi_2^{\Omega_1})$ for any $\epsilon>0$. This leads to a contradiction which 
implies that, there exists $\epsilon_0>0$ such that $\NNN({\varphi_{2,\epsilon}}) \subset \Omega'$ for any $\epsilon< \epsilon_0$. 

 \begin{figure}[ht]
\centering
\includegraphics[height=4cm]{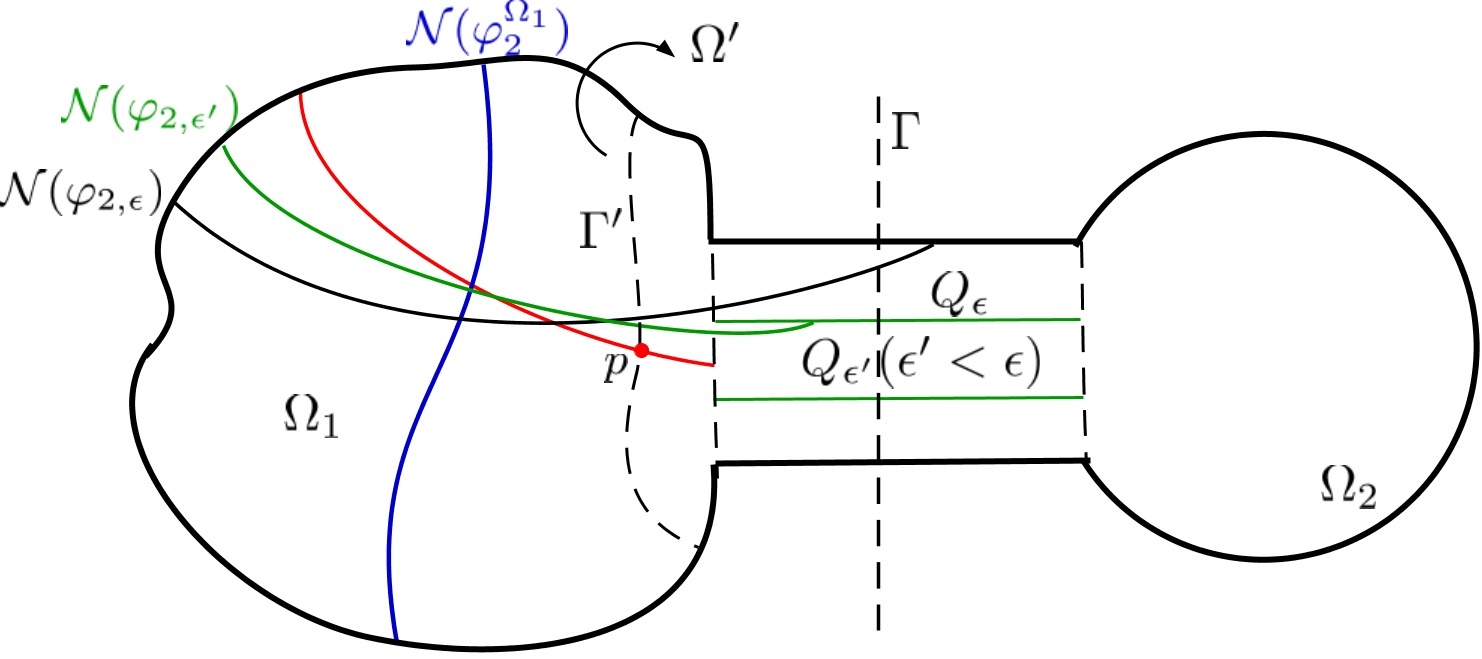}
\caption{Behaviour of nodal line as $\epsilon \to 0$}
\label{fig: Behaviour of nodal line}
\end{figure}

Now, using Theorem \ref{thm: Henrot_conv_theo} one can extract a subsequence called  $\NNN_{\varphi_{2,\epsilon_i}}$ ($\epsilon_i< \epsilon_0$) which converges to a set $X \subset \Omega_1$ in the Hausdorff metric and by Lemma \ref{lem:lim_M_1} we know that $X\subset \NNN(\varphi_2^{\Omega_1})$. 
Now following the argument as in Proposition \ref{prop: SP/NP open condition}, we conclude the proof. 
\end{proof}

\begin{remark}
With some obvious modifications, the above proof also tells us that if $\Omega_1$ satisfies (NP), then so does $\Omega_\epsilon$ for sufficiently small $\epsilon$.
\end{remark}

So far, we have only looked at the nodal sets of the dumbbells under Case II. Now we turn our attention to the location of nodal sets under Case I.

\begin{theorem}
    Consider a family of dumbbells as described in Theorem \ref{thm: narrow connectors} with the condition that $\lambda_{2,\epsilon}\to \lambda_1^{\Omega_1}$. Then, for sufficiently small $\epsilon$, $\NNN(\varphi_{2,\epsilon})$ does not  enter $\Omega'\subset \Omega_1$. 
\end{theorem}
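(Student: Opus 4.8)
The plan is to follow the proof of Theorem~\ref{thm: narrow connectors} almost verbatim, the point being that Case~I is in fact \emph{softer}: the limiting eigenfunction is now $\varphi_{2,0}=\varphi_1^{\Omega_1}$, the ground state of $\Omega_1$, which has no zeros in the interior of $\Omega_1$. So I would reuse the construction from that proof: pick the cross-sectional hypersurface $\Gamma'$, the tube $T_{\Gamma',\delta}\subset\Omega_1$, the region $\Omega=\Omega'\cup T_{\Gamma',\delta}$ with boundary pieces $\Gamma=\Gamma'+\delta$ and $\Gamma^*=\overline{\Omega}\cap\partial\Omega_1$, and the splitting $\overline{\Omega'}=\overline{\Omega_1'}\cup\overline{\Omega_2'}$ with $\Omega_1'=\{x\in\Omega:\dist(x,\partial\Omega)>\delta\}$ and $\Omega_2'$ the remaining thin shell. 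I would also arrange that $\overline{\Omega'}$ avoids the flat patch of $\partial\Omega_1$ to which the connector is glued, so that $\Gamma^*$ is a fixed smooth piece of $\partial\Omega_\epsilon$, independent of $\epsilon$.

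First I would establish the convergence $\varphi_{2,\epsilon}\to\varphi_1^{\Omega_1}$. In Case~I the rearranged spectrum has $\lambda_{2,0}=\lambda_1^{\Omega_1}$, so the hypothesis $\lambda_{2,\epsilon}\to\lambda_1^{\Omega_1}$ together with $\|\varphi_{2,\epsilon}-\varphi_{2,0}\|_{L^2(\RR^n)}\to 0$ lets me run the same chain of elliptic estimates (Theorems~8.24 and~8.15 of~\cite{GT}) as in the proof of Theorem~\ref{thm: narrow connectors}, obtaining $\varphi_{2,\epsilon}\to\varphi_1^{\Omega_1}$ in $C^0(\Omega_1')$, and in fact in $C^0_{\mathrm{loc}}$ of the interior of $\Omega_1$. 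In addition, since $\varphi_{2,\epsilon}$ and $\varphi_1^{\Omega_1}$ both solve $-\Delta u=\lambda u$ with vanishing Dirichlet data on the fixed smooth piece $\Gamma^*$ and with converging eigenvalues, boundary Schauder estimates upgrade this to $C^1$ convergence up to $\Gamma^*$, i.e. on a one-sided neighbourhood of $\Gamma^*$ inside $\overline{\Omega'}$.

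Then I would invoke that $\varphi_1^{\Omega_1}$ is a ground state: after fixing its sign, $\varphi_1^{\Omega_1}>0$ in the interior of $\Omega_1$, and by the Hopf boundary point lemma $\partial_\eta\varphi_1^{\Omega_1}<0$ on $\partial\Omega_1$; hence there is $c>0$ with $\varphi_1^{\Omega_1}\geq c$ on the part of $\overline{\Omega'}$ bounded away from $\Gamma^*$, and $\varphi_1^{\Omega_1}(x)\geq c\,\dist(x,\partial\Omega_1)$ on a layer along $\Gamma^*$. Combining the two facts: on the compact part away from $\Gamma^*$, $C^0$-closeness forces $\varphi_{2,\epsilon}>0$ for small $\epsilon$; on the layer along $\Gamma^*$, since $\varphi_{2,\epsilon}-\varphi_1^{\Omega_1}$ vanishes on $\Gamma^*$ with uniformly small gradient there, $|\varphi_{2,\epsilon}(x)-\varphi_1^{\Omega_1}(x)|\leq\eta_\epsilon\,\dist(x,\partial\Omega_1)$ with $\eta_\epsilon\to 0$, so $\varphi_{2,\epsilon}(x)\geq(c-\eta_\epsilon)\,\dist(x,\partial\Omega_1)>0$. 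Therefore, for $\epsilon$ small enough, $\varphi_{2,\epsilon}$ is strictly sign-definite on $\Omega'$, and in particular $\NNN(\varphi_{2,\epsilon})\cap\Omega'=\emptyset$.

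The hard part will be precisely the behaviour of the nodal set near the piece $\Gamma^*\subset\partial\Omega_1$: a priori the nodal set could slip into $\Omega'$ through a thin layer hugging $\partial\Omega_1$, where $\varphi_1^{\Omega_1}$ itself is small, and the $C^0$-type control that the argument of Theorem~\ref{thm: narrow connectors} directly yields there (via the uniform gradient bound of~\cite{X}) does not by itself rule out $\varphi_{2,\epsilon}$ dipping slightly negative. The device to overcome this is the $C^1$-up-to-the-boundary convergence together with the strict Hopf sign of $\partial_\eta\varphi_1^{\Omega_1}$. Alternatively one can sidestep it entirely: unlike in Case~II, where $\Gamma'$ had to be open-ended so that $\Omega'$ could contain $\NNN(\varphi_2^{\Omega_1})$, in Case~I one is free to take $\Gamma'$ to be a closed hypersurface with $\overline{\Omega'}\subset\subset\mathrm{int}\,\Omega_1$, and then the claim is immediate from $C^0_{\mathrm{loc}}$-convergence and $\varphi_1^{\Omega_1}\geq c>0$ on the compact set $\overline{\Omega'}$.
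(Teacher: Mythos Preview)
Your proof is correct and in fact treats the boundary layer more carefully than the paper does. The paper's argument runs differently: assuming for contradiction that $\NNN(\varphi_{2,\epsilon})$ enters $\Omega'$, it asserts (without further justification) that $\NNN(\varphi_{2,\epsilon})$ must then intersect $\Gamma'$; since $\Gamma'$ lies compactly in the interior of $\Omega_1$, where $\varphi_1^{\Omega_1}$ is bounded below by a positive constant, the $C^0(\overline{\Omega'})$-convergence inherited from the proof of Theorem~\ref{thm: narrow connectors} forces $\varphi_{2,\epsilon}>0$ on $\Gamma'$ for small $\epsilon$, a contradiction. Your approach instead establishes $\varphi_{2,\epsilon}>0$ on the \emph{whole} of $\Omega'$ directly, using $C^1$-convergence up to $\Gamma^*$ together with the Hopf sign of $\partial_\eta\varphi_1^{\Omega_1}$ to control the thin layer along $\partial\Omega_1$. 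This sidesteps the implication ``nodal set enters $\Omega'\Rightarrow$ nodal set meets $\Gamma'$'', which the paper leaves unargued (to make it rigorous one would need, say, a Faber--Krahn/inner-radius argument ruling out a nodal domain trapped in the thin shell $\Omega_2'$).

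On your alternative (B): note that the construction in the proof of Theorem~\ref{thm: narrow connectors} explicitly excludes a closed $\Gamma'$, and the remark preceding the present theorem stresses that one may push $\Gamma'$ arbitrarily close to the connector --- so the intended $\Omega'$ does meet $\partial\Omega_1$ along $\Gamma^*$. Taking $\overline{\Omega'}\subset\subset\Omega_1$ therefore yields only the weaker compact-subset statement; your route (A) is the one that matches the intended strength of the result.
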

We would like to point out that the only restriction on the choice of $\Gamma'$ in this case is that $\Gamma'$ divides every $\Omega_{\epsilon}$ into exactly two components as described in the beginning of the proof of Theorem \ref{thm: narrow connectors}. In other words, one can choose $\Gamma'\subset\Omega_1$ sufficiently close to the connectors.
\begin{proof}
Considering the case when $\lambda_{2,0}=\lambda_1^{\Omega_1}$ and $\displaystyle \varphi_{2,\epsilon_i}\to \varphi_1^{\Omega_1}$ in $L^2(\Omega_1)$, 
note that $\varphi_1^{\Omega_1}$ does not change sign in $\Omega_1$. Without loss of generality, assume that $\varphi^{\Omega_1}_1>0$  in $\Omega_1$. 
Moreover, we know that $\NNN(\varphi_{2,\epsilon})$ divides $\Omega_\epsilon$ into two components. If possible, let $\NNN(\varphi_{2,\epsilon})$ enters $\Omega'$ for every $\epsilon>0$. Then $\NNN(\varphi_{2,\epsilon})$ intersects $\Gamma'$ for every $\epsilon$. Since $\varphi^{\Omega_1}_1>0$ in $\Omega_1$ and $\displaystyle \|\varphi_{2,\epsilon}- \varphi_1^{\Omega_1}\|_{C^0(\overline{\Omega'})} \to 0$, it is clear that for small enough $\epsilon$, $\varphi_{2,\epsilon}>0$ on $\Gamma'$, which is a contradiction. 
\end{proof}
 
A natural follow-up to the above theorem is that, under Case I, {\em can we say that for sufficiently small $\epsilon$, $\NNN(\varphi_{2,\epsilon})$ does not enter $\Omega_1$ at all?}

We comment in passing that dumbbell domains have many interesting properties that are of interest to spectral theorists. For example, they provide examples of domains which have arbitrarily low second Neumann eigenvalue, and almost satisfy a weak version of the hot spot conjecture (see \cite{MS1}). There is also a significant literature on mass concentration questions in the connector of the dumbbell, for example see \cite{BD, vdBB, GM3, DNG}.  

\subsection{Counterexample of Payne property in higher dimensions}\label{subsec:counter_Four}

We begin by discussing the planar counter-example as given in \cite{HHN}. First we choose two concentric balls $B_{R_1}$ and $B_{R_2}$ in $\RR^2$ such that 
$$\lambda_1(B_{R_1}) < \lambda_1(B_{R_2}\setminus \overline{B_{R_1}}) < \lambda_2(B_{R_1}).$$
Next, we carve out holes into $\pa B_{R_1}$. 
Let $N\in \NN$ and $\epsilon< \pi/N$. The domain $\Omega_{N, \epsilon}$ is defined as
\begin{equation}
    \Omega_{N, \epsilon}= B_{R_1} \cup (B_{R_2}\setminus \overline{B_{R_1}}) \cup \left(\cup_{j=0}^{N-1}\left\{x\in \RR^2: r= R_1, \omega\in \left(\frac{2\pi j}{N}-\epsilon, \frac{2\pi j}{N}+\epsilon\right) \right\} \right).
\end{equation}
Then the first nodal line does not intersect the boundary for sufficiently large $N$ and small $\epsilon$.
\vspace{2mm}
 \begin{figure}[ht]
\centering
\includegraphics[scale=0.8]{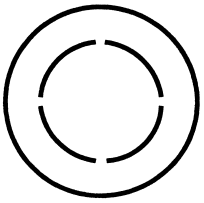}
\caption{N=4 (\cite{HHN}, Figure 1)}
\label{fig:HHN counterexample domain}
\end{figure}

In higher dimensions ($n\geq 3$), the domain constructed by Fournais was motivated from the example in \cite{HHN} described above and defined as follows:
\begin{equation}
    \Omega_\epsilon= B_{R_1} \cup \left(B_{R_2}\setminus \overline{B_{R_1}}\right) \cup \left(\bigcup_{i=1}^N B(x_i, \epsilon) \right),
\end{equation}
where $B_R$ is a ball of radius $R$ centered at 0, and $x_1,  \cdots, x_N\in S^{n-1}_{R_1}$ are chosen in such a way that the ``patches'' $B(x_i, \epsilon) \cap S^{n-1}_{R_1}$ are evenly distributed over $S^{n-1}_{R_1}$, the sphere with center at $0$ and  radius $R_1$. For convenience, moving forward we will refer to the sphere $S^{n-1}_{R_1}$ as $S$. Also, $R_1$ and $R_2$ are chosen such that 
$$\lambda_1(B_{R_1}) < \lambda_1(B_{R_2}\setminus \overline{B_{R_1}}) < \lambda_2(B_{R_1}).$$
Then for small enough $\epsilon$ the second eigenfunction $\varphi_{2,\epsilon}$ satisfies
\begin{equation*}
    \NNN(\varphi_{2,\epsilon})\cap \pa \Omega_\epsilon=\emptyset.
\end{equation*}
The main idea in \cite{F} is to prove that for small enough $\delta>0$, there is $\epsilon>0$ such that $\varphi_{2,\epsilon}(x)>0$ on $|x|=R_1-\delta$. Then using various assumptions made during the construction along with certain topological restrictions of the first nodal set $\NNN(\varphi_{2,\epsilon})$, one concludes that $\NNN(\varphi_{2,\epsilon})$ is contained inside $B_{R_1-\delta}$.

Note that the domain $\Omega_\epsilon$ described above is not simply connected. Also, $\Omega_\epsilon$ has a simple spectrum. Our goal in this section is to produce a simply connected domain whose nodal set does not intersect the boundary. 

Let $\Omega_0:= \Omega_\epsilon$, where $\Omega_\epsilon$ is the above described domain of Fournais for which the nodal set is contained in $B_{R_1-\delta}$. Throughout the rest of the proof, the above $\epsilon$ and $\delta$ will remain fixed. From $\Omega_0$ we can construct simply connected domains by adding $(n -  1)$-dimensional ``tunnels'' or ``strips'' $T_\eta$ along $S$ in between the ``patches'' $B(x_i, \epsilon)$ such that every patch 
is connected to the neighbouring patches by tunnels (see  Figure \ref{fig:Topologically simple counterexample to Payne}). Our idea is to make these tunnels narrow enough so that the nodal set of $\Omega_0$ does not get sufficiently perturbed. 

 \begin{figure}[ht]
\centering
\includegraphics[scale=0.11]{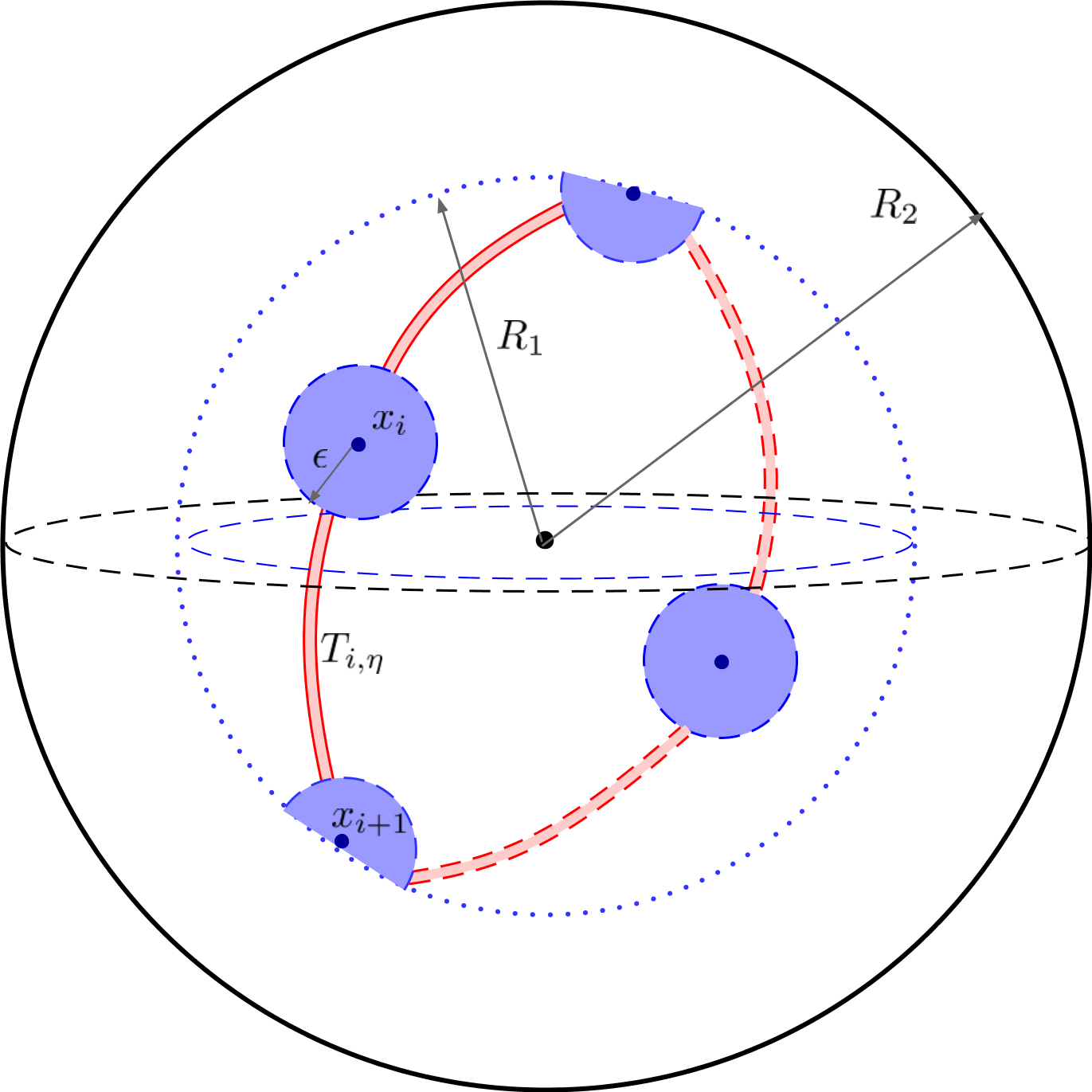}
\caption{Topologically simple counterexample to Payne}
\label{fig:Topologically simple counterexample to Payne}
\end{figure}

Let $\eta>0$. For any $i, j\in \{1, \cdots, N\}~(i\neq j)$ let $p_{ij}(t):[0,1]\to S$ be a path between $x_i$ and $x_j$ along $S$ such that the length of $p_{ij}$ is $\dist_S(x_i, x_j)$, the geodesic distance between $x_i$ and $x_j$ on S. Let $t_0$ and $t_1\in [0,1]$ be such that 
$p_{ij}(t_0) \in \pa B(x_i, \epsilon)$ and 
$p_{ij}(t_1) \in \pa B(x_j, \epsilon)$. Now consider the path segment $P_{ij}=[p_{ij}(t_0), p_{ij}(t_1)]$. Let $\tau_{ij}(\eta)$ denote the $\eta-$tubular neighbourhood of $P_{ij}$. Define the tunnel $T_\eta^{i,j}:= S\cap \tau_{ij}(\eta)$. 
Let there be $k_N$ tunnels in total. 
Denote
$$T_\eta:= \bigcup_{i=1}^{k_N} T^{i, j}_{\eta}.$$
Now, we define a family of domains $\Omega_\eta$ as
$$\Omega_\eta:= B_{R_1} \cup \left(B_{R_2}\setminus \overline{B_{R_1}}\right) \cup \left(\cup_{i=1}^N B(x_i, \epsilon) \right) \cup T_\eta = \Omega_0\cup T_\eta. $$

It is easy to check that  $\Omega_\eta$ is simply connected and given any sequence $\eta_{q}\searrow 0$ there is a subsequence $\{\eta_p\}\subseteq \{\eta_q\}$ such that $\Omega_{\eta_p}$ converges to $\Omega_0$ in Hausdorff metric. 
Now we check that the perturbation to the nodal set is controlled. 

Let $\varphi_{j,\eta}$, $\varphi_{j, 0}$ denote the eigenfunction corresponding to eigenvalues $\lambda_{j, \eta}$, $\lambda_{j, 0}$ of the Dirichlet-Laplacian $-\Delta_\eta$, $-\Delta_0$. We assume that the eigenfunctions are $L^2$-normalized. Also note that, from our assumption, we have that $\varphi_{2,0}>0$ in $\Omega_0\setminus B_{R_1-\delta}$.

Let $\{\eta_p\}\searrow 0$ be any strictly monotonically decreasing sequence and $X_{p}:= B_{R_2}\setminus \Omega_{\eta_p}$. Note that $\{X_p\}$ is a increasing family of compact sets.
Define
$$P_p:= \bigcup_{k\geq p}^\infty X_k \quad \text{and} \quad Q_p:= \bigcap_{k\geq p}^\infty X_k.$$
Using the convention from \cite{Sto}, we have that $P_p\nearrow X= \underline{\lim }X_p$ and $Q_p\searrow X= \overline{\lim }X_p$ where $X= B_{R_2}\setminus \Omega_0'$ and $\Omega_0':= \Omega_0\cup (\cup P_{ij})$ . Also, for any $p, m\in \NN$, $X_p \bigtriangleup X_m \subset T_{\eta_1}$ which has finite capacity and $\capacity(\Omega\bigtriangleup \Omega')=0$. Then using Theorem 2.2 of \cite{Sto}, we have that, $-\Delta_{\eta_p}$ converges to $-\Delta_0$ as $p\to \infty$ in norm resolvent sense (recall that if $\{T_p\}_{n=1}^\infty$ and $T$ are unbounded self-adjoint operators, then $T_p\to T$ in norm resolvent sense means that for some $z\in \mathbb{C} \setminus \mathbb{R}$, $\|(zI- T_p)^{-1}- (zI- T)^{-1}  \|\to 0$ as $p\to \infty$). In particular, for $\lambda_{2,0}$ there exists a sequence $\eta_p\to 0$ such that
$$\lambda_{2, \eta_p}\to \lambda_{2,0}.$$ Redefining $\varphi_{2,\eta_p}, \varphi_{2,0}$ by $0$ on $\RR^n\setminus \Omega_{\eta_p}, \RR^n\setminus \Omega_0$ we also have that $\varphi_{2,\eta_p}\to \varphi_{2,0}$  in $L^2(\RR^n)$. 

We know that $\NNN(\varphi_{2,0})$ is completely contained inside $B_{R_1-\delta}$. 
Now we would like to show that $\varphi_{2, \eta_p}\to \varphi_{2,0}$ in $C^0(B_{R_1-\delta})$. 

Consider 
$$(\Delta+\lambda_{2,0})[\varphi_{2,\eta_p}-\varphi_{2,0}]=(\lambda_{2,0}-\lambda_{2,\eta_p})\varphi_{2,\eta_p} \text{ on } B_{R_1-\delta'},$$
and 
$$(\Delta+\lambda_{2,\eta_p})\varphi_{2, \eta_p}=0 \text{ on } \Omega_{\eta_p}$$
where  $0<\delta'<\delta$. 
Now using Theorem 8.24 and Theorem 8.15 of \cite{GT} consecutively on the above equations as done in the proof of Theorem \ref{thm: narrow connectors}  we have that for some $q>n$ and $\nu>\sup\{\lambda_{2,0}, \lambda_{2,\eta_p}\}$,
\begin{equation}
    \|\varphi_{2,\eta_p}-\varphi_{2,0}\|_{L^\infty(B_{R_1-\delta})} \leq  C\left(\|\varphi_{2,\eta_p}-\varphi_{2,0}\|_{L^2(\RR^n)}+ C'|\lambda_{2,0}-\lambda_{2,\eta_p}|\cdot\|\varphi_{2,\eta_p} \|_{L^{2}(\RR^n)}\right),
\end{equation}

where $C, C'$ depends on $n, q, \nu,$ and $|\Omega_{\eta_p}|$. For each $p$, $|\Omega_{\eta_p}|$ is uniformly bounded which implies that the constants on the right hand are independent of $p$. Now using $\lambda_{2,\eta_p} \to \lambda_{2,0}$ and $\|\varphi_{2,\eta_p}-\varphi_{2,0}\|_{L^2(\RR^n)}\to 0$ we have that as $p\to \infty$ 
\begin{align*}
    \|\varphi_{2,\eta_p}-\varphi_{2,0}\|_{L^\infty(B_{R_1-\delta})}  \to 0.
\end{align*}
which gives our desired $C^0(B_{R_1-\delta})$ convergence.

Finally, using Lemma \ref{lem:lim_M_1} and Proposition \ref{prop: SP/NP open condition}, we know that $\NNN(\varphi_{2, \eta_n})$ converges to $\NNN(\varphi_{2,0})$. So, for sufficiently large $n_0\in \NN$ we have, 
$$\NNN(\varphi_{2, \eta_{n_0}}) \subset \subset B_{R_1-\delta}.$$
In other words, we have a simply connected domain $\Omega_{\eta_{n_0}}\in \RR^n (n\geq 3)$ for which  $$\NNN(\varphi_{2, \eta_n})\cap \pa \Omega_{\eta_{n_0}} = \emptyset.$$

Now we look at the proof of Theorem \ref{thm: satisfying NP} and the proof mimics the construction used in \cite{MS}. We give a brief sketch of the proof and leave out the details to avoid repetition. 

 \begin{figure}[ht]
\centering
\includegraphics[scale=0.11]{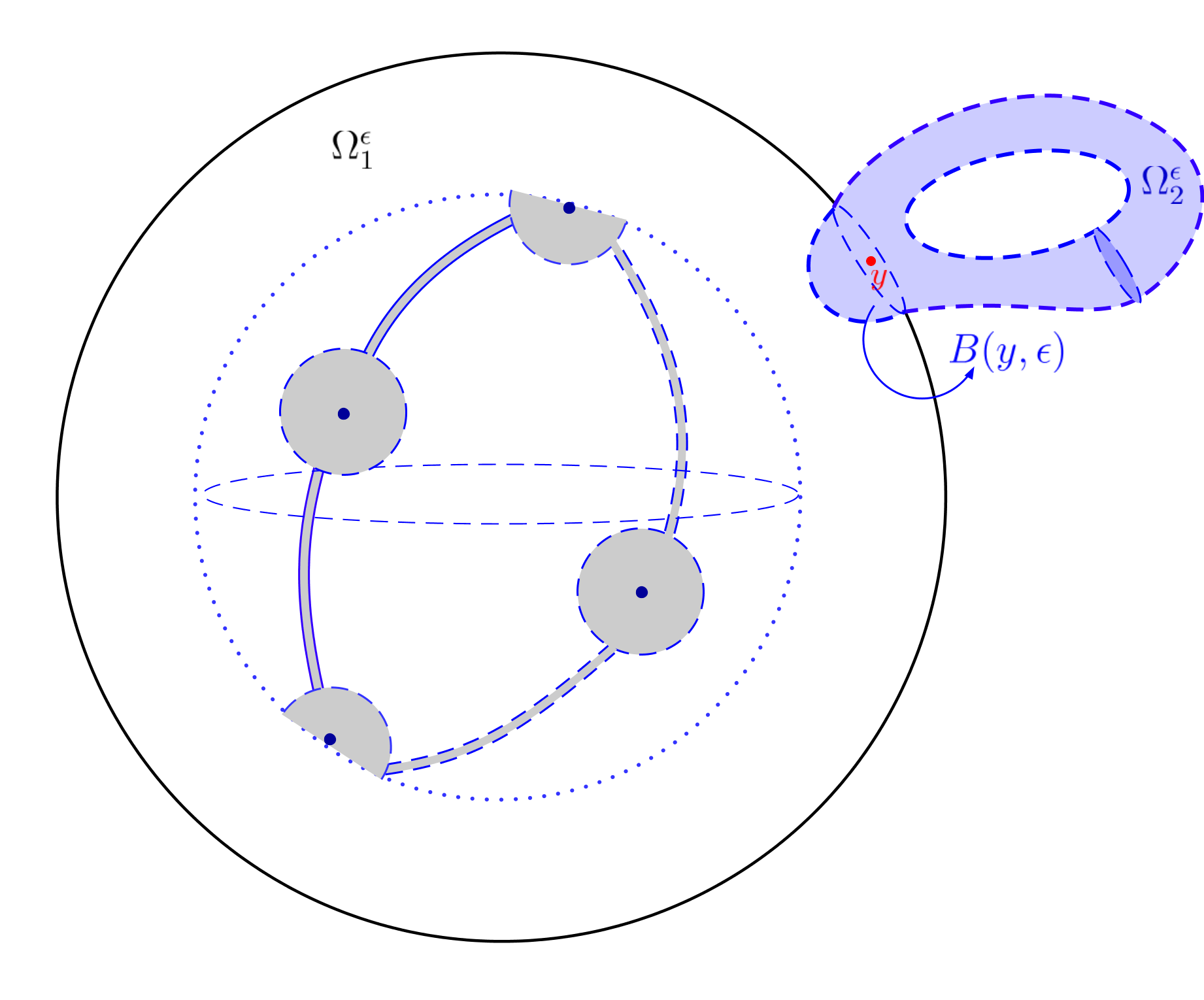}
\caption{Counterexample of Payne property with any prescribed topology}
\label{fig:Counterexample with any prescribed topology}
\end{figure}

\begin{proof}
Given $\Omega\subset \RR^n$ ($n\geq 3$), the way to construct the required domains is by taking the connected sum of $\Omega_{\eta_{n_0}}$ with the given domain $\Omega$. But we note that one cannot blindly attach one domain with another as described above since the attached metric will not be Euclidean (or even flat) in general. So the family of metrics is to be designed precisely to ensure that each deformation $\Omega_\epsilon= \Omega_{\eta_{n_0}} \#_\epsilon \Omega$ is a Euclidean domain.

Consider a one-parameter family of deformations $\Omega_\epsilon$, where $\Omega_\epsilon$ can be written as a disjoint union $\Omega_1^\epsilon \sqcup \Omega_2^\epsilon$ (see Figure \ref{fig:Counterexample with any prescribed topology}), where 
\begin{itemize}
    \item $\Omega_1^\epsilon := \Omega_{\eta_{n_0}} \setminus B(\xi, \epsilon)$, for some $\xi \in \pa B_{R_2}$.
    \item $\Omega_2 \subset \RR^n$ is another domain defined as $\Omega\cup B(y, 1)$ with $y\in \pa \Omega$, and $\Omega_2^\epsilon$ is obtained from $\Omega_2$ by scaling $g|_{\Omega_2^\epsilon} = \epsilon^2 g|_{\Omega_2}$.
\end{itemize}

Then, from \cite{MS}, we have that for sufficiently small $\epsilon$, $\Omega$ satisfies (NP). Moreover, since $\Omega_{\eta_{n_0}}$ is simply connected, we have that $\Omega_1^\epsilon$ is simply connected which implies that $\Omega_\epsilon$ has the prescribed topology of $\Omega$.
\end{proof}

\subsection{Angle estimates of nodal sets} \label{sec:geometric_prop}
A consequence of the fact that (SP) is open (as proved in Proposition \ref{prop: SP/NP open condition}) is that the angle at the nodal critical points of the first nodal set with order of vanishing $2$ and satisfying  (SP)  remains stable under perturbation.

\subsubsection{Opening angles nodal domains in the interior and the boundary} It was shown by Melas \cite{M} that the nodal domain for the second Dirichlet eigenfunction which intersects the boundary $\pa\Omega$ cannot have an ``opening angle'' of $0$ or $\pi$ at the point of intersection. Here, we provide a generalisation of this result from a different perspective, one that was introduced in \cite{GM1}. 


{\em Interior cone conditions.} In dimension $n = 2$, a well-known result of Cheng \cite{Ch} says the following (see also ~\cite{St} for a proof using Brownian motion):

\begin{theorem}
	For a compact Riemannian surface $M$, the nodal set $\NNN_{\varphi_\lambda}$ satisfies an interior cone condition with opening angle $\alpha \gtrsim \frac{1}{\sqrt{\lambda}}$.
\end{theorem}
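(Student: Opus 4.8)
The plan is to reduce, by rescaling to the wavelength scale, the local study of $\NNN_{\varphi_\lambda}$ near a nodal point to the study of the nodal set of an ``almost harmonic'' function on the unit ball, and then to exploit the rigidity of nodal sets of harmonic polynomials in two dimensions.

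First, I would fix $p \in \NNN_{\varphi_\lambda}$ and pass to geodesic normal coordinates on a ball $B(p,r_0)$ with $r_0$ smaller than the injectivity radius of $M$; there the metric $g$ is uniformly comparable to the Euclidean metric with $C^\infty$ control independent of $p$, by compactness of $M$. Rescaling via $u(x) := \varphi_\lambda\!\left(p + x/\sqrt{\lambda}\right)$ for $x$ in the Euclidean unit ball $B_1\subset\RR^2$, the function $u$ solves a uniformly elliptic equation $\tilde{L}u = 0$, where $\tilde{L}$ is (up to the negligible zeroth order term on $B_1$) the Laplacian of the metric $g$ rescaled by $\lambda$; the coefficients of $\tilde{L}$ converge in $C^\infty(B_1)$ to those of the flat Laplacian as $\lambda\to\infty$. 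Thus $u$ is ``almost harmonic'' in the sense of the heuristic recorded after \eqref{eqtn: Eigenfunction equation} (see also Section $2$ of \cite{Ma}).

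Second, I would bound the order of vanishing. By the Donnelly--Fefferman frequency function / doubling estimates for Laplace eigenfunctions (in two dimensions this also follows from classical Carleman inequalities), the vanishing order $N = N(p)$ of $\varphi_\lambda$ at any point $p\in M$ satisfies $N(p)\lesssim_{(M,g)}\sqrt{\lambda}$. Hence after rescaling $u$ vanishes to order $N$ at the origin, and its leading homogeneous Taylor part is close on $B_1$ to a homogeneous harmonic polynomial $P_N$ of degree $N$. In $\RR^2$ one has $P_N(\rho,\theta) = c\,\rho^N\cos\!\big(N(\theta-\theta_0)\big)$ in polar coordinates, whose zero set is precisely $N$ straight lines through the origin, consecutive ones meeting at angle $\pi/N$.

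Third, I would transfer this picture to $\NNN(u)$, hence to $\NNN_{\varphi_\lambda}$. By Cheng's local structure theorem for nodal sets on surfaces \cite{Ch} (see also \cite{St} for a probabilistic approach), near a zero of order $N$ the nodal set of an almost harmonic function consists of $N$ $C^1$-arcs meeting at that point in ``equiangular'' fashion, up to an error that vanishes as one approaches the crossing and as $u\to P_N$; away from the crossing this is the implicit function theorem applied where $\nabla u\neq 0$, and at the crossing it is a degree-theoretic argument. Consequently each connected component of $B_1\setminus\NNN(u)$ touching the origin contains a Euclidean cone with vertex $0$, bounded height, and opening angle $\geq \pi/N - o(1)\gtrsim_{(M,g)} 1/\sqrt{\lambda}$. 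Undoing the rescaling changes only the height of the cone and not its angle, and reverting from normal coordinates to $(M,g)$ distorts angles by a factor $1+O(r_0)$; this yields the interior cone condition for $\NNN_{\varphi_\lambda}$ at $p$ with opening angle $\gtrsim_{(M,g)} 1/\sqrt{\lambda}$.

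The main obstacle is the third step: making the closeness of $\NNN(u)$ to $\{P_N=0\}$ quantitative and \emph{uniform} in $\lambda$ and in the base point $p$, especially across the singular crossing point where the implicit function theorem is unavailable and where $N$ may be large. Keeping track that all implied constants depend only on $(M,g)$ — and not on the (possibly large) degree $N$ — is precisely the delicate part of Cheng's analysis.
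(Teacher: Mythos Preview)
Your proposal is correct and follows essentially the same route as the paper, which does not give a full proof but sketches Cheng's argument: the Bers local expansion (Theorem~\ref{Zildo}) shows that at a nodal point of vanishing order $k$ the leading term is a degree-$k$ homogeneous harmonic polynomial, hence in two dimensions a multiple of $r^k\cos(k\theta+\theta_0)$ with equiangular zero set of angle $\pi/k$, and combining this with the Donnelly--Fefferman bound $k\lesssim_{(M,g)}\sqrt{\lambda}$ yields the $1/\sqrt{\lambda}$ cone. Your wavelength rescaling layer is an unnecessary detour --- Bers' theorem already gives an \emph{exact} Euclidean-harmonic leading term in normal coordinates, so there is no need to pass through an ``almost harmonic'' approximation or to worry about uniform closeness of $\NNN(u)$ to $\{P_N=0\}$ as $\lambda\to\infty$; the local structure follows directly from the Taylor expansion at the point itself.
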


Furthermore, in dimension $2$, the nodal lines form an equiangular system at a singular point of the nodal set. The idea behind Cheng's proof is the following: using a local power series expansion due to Bers (see Theorem \ref{Zildo} below), near any point of vanishing the eigenfunction ``looks like'' a homogeneous harmonic polynomial whose degree matches the order of vanishing at that point. If the order of vanishing is $k$, then  in two dimensions such a function would be a linear combination of $r^k\cos k\theta$ and $r^k \sin k\theta$. This gives an equiangular nodal junction. 

 \begin{figure}[ht]
\centering
\includegraphics[scale=0.18]{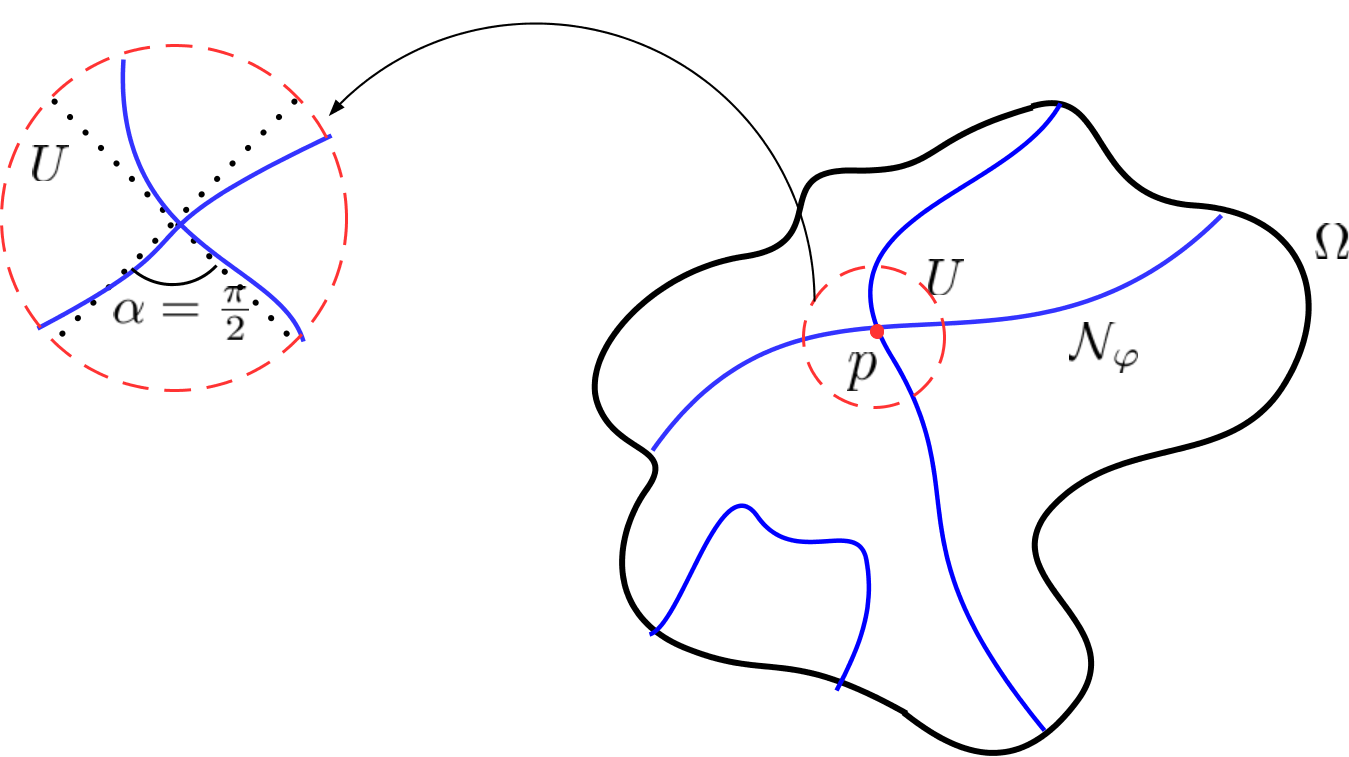}
\caption{Four equiangular ``rays'' from $p$}
\end{figure}
     

The situation is significantly more complicated in higher dimensions. Setting $\dim M \geq 3$, we discuss the question whether at the singular points of the nodal set $\NNN_\varphi$, the nodal set can have arbitrarily small opening angles, or even ``cusp''-like situations, or the nodal set has to self-intersect ``sufficiently transversally''.
We observe that in dimension $n \geq 3$ the nodal sets satisfies an appropriate ``interior cone condition'', and give an estimate on the opening angle of such a cone in terms of the eigenvalue $\lambda$.

Now, in order to properly state or interpret such a result, one needs to define the concept of ``opening angle'' in dimension $n \geq 3$. We start by defining precisely the notion of tangent directions in our setting.
\begin{defi}
	Let $\Omega_\lambda$ be a nodal domain and $x \in \partial \Omega_\lambda$, which means that $\varphi_\lambda(x) = 0$. Consider a sequence $x_n \in \NNN_\varphi$ such that $x_n \to x$. Let us assume that in normal coordinates around $x$, $x_n = \exp(r_nv_n)$, where $r_n$ are non-negative real numbers, and $v_n \in S(T_xM)$, the unit sphere in $T_xM$. Then, we define the space of tangent directions at $x$, denoted by $\mathcal{S}_x\NNN_\varphi$ as
	\begin{equation}
	\mathcal{S}_x\NNN_\varphi = \{v \in S(T_xM) : v = \lim v_n, \text{  where  }x_n \in \NNN_\varphi, x_n \to x\}.
	\end{equation}
\end{defi}

Observe that there are more well-studied variants of the above definition,
for example, as due to Clarke or Bouligand (for more details, see ~\cite{R}). With that in place, we now give the following definition of ``opening angle''.
\begin{defi}\label{def:int_cone_cond}
	
	We say that the nodal domain $\Omega_\lambda$ satisfies an interior cone condition with opening angle $\alpha$ at $x \in \NNN_\varphi\subset\pa \Omega_{\lambda}$, if any connected component of $S(T_xM) \setminus \mathcal{S}_x\pa \Omega_\varphi$ has an inscribed ball of radius $\gtrsim \alpha$.
\end{defi}

We will use Bers scaling of eigenfunctions near zeros (see ~\cite{Be}). We quote the version as appeared in ~\cite{Z}, Section 3.11.
\begin{theorem}[Bers]\label{Zildo}
Assume that $\varphi_\lambda$ vanishes to order $k$ at $x_0$. Let $\varphi_\lambda(x) = \varphi_k(x) + \varphi_{k + 1}(x) + ..... $ denote the Taylor expansion of $\varphi_\lambda$ into homogeneous terms in normal coordinates $x$ centered at $x_0$. Then $\varphi_\kappa(x)$ is a Euclidean harmonic homogeneous polynomial of degree $k$.
\end{theorem}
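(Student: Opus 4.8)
The plan is to exploit the fact that, in geodesic normal coordinates centred at $x_0$, the Laplace--Beltrami operator differs from the flat Laplacian only by terms whose coefficients vanish at the origin, and then to compare the lowest-order homogeneous pieces in the eigenequation $\Delta_g \varphi_\lambda + \lambda \varphi_\lambda = 0$.

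First I would fix normal coordinates $x = (x_1,\dots,x_n)$ around $x_0$, so that $g_{ij}(0) = \delta_{ij}$ and all first derivatives of $g_{ij}$ vanish at $0$; consequently $g^{ij}(x) = \delta^{ij} + O(|x|^2)$. Writing the operator in these coordinates as $\Delta_g u = a^{ij}(x)\,\partial_i\partial_j u + b^j(x)\,\partial_j u$ with $a^{ij} = g^{ij}$, one has $a^{ij}(x) - \delta^{ij} = O(|x|^2)$ and $b^j(x) = O(|x|)$, since the coefficients $b^j$ are built from single derivatives of the metric and hence vanish at the origin. By elliptic regularity $\varphi_\lambda$ is smooth, so I may write $\varphi_\lambda = \varphi_k + \psi$, where $\varphi_k$ is the homogeneous degree-$k$ part of the Taylor polynomial of $\varphi_\lambda$ at $x_0$ (not identically zero, since the order of vanishing is exactly $k$) and $\psi$ vanishes to order at least $k+1$.

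The key step is to substitute this decomposition into $\Delta_g \varphi_\lambda + \lambda \varphi_\lambda = 0$ and sort terms by homogeneity. The flat Laplacian $\Delta_{\mathrm{flat}}\varphi_k := \delta^{ij}\partial_i\partial_j\varphi_k$ is a homogeneous polynomial of degree $k-2$, whereas every remaining contribution is $O(|x|^{k-1})$: indeed $(a^{ij}-\delta^{ij})\partial_i\partial_j\varphi_k = O(|x|^{k})$, $b^j\partial_j\varphi_k = O(|x|^{k})$, $a^{ij}\partial_i\partial_j\psi = O(|x|^{k-1})$, $b^j\partial_j\psi = O(|x|^{k+1})$, and $\lambda\varphi_\lambda = O(|x|^{k})$. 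Hence $\Delta_{\mathrm{flat}}\varphi_k(x) = O(|x|^{k-1})$; but a homogeneous polynomial of degree $k-2$ that is $O(|x|^{k-1})$ near the origin must vanish identically (restrict to a ray $x = tv$, divide by $t^{k-2}$, and let $t\to 0$). Therefore $\varphi_k$ is a nonzero Euclidean harmonic homogeneous polynomial of degree $k$, which is precisely the assertion of the theorem (the case $k=1$ being trivial, as a linear form is automatically harmonic).

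I do not expect a genuine obstacle here: this is the classical local analysis of an eigenfunction near one of its zeros, and the only points requiring care are the homogeneity bookkeeping and the verification that every error term is of strictly higher order than $\Delta_{\mathrm{flat}}\varphi_k$. If one prefers not to invoke smoothness of $\varphi_\lambda$, it suffices to work with the Taylor expansion of $\varphi_\lambda$ to order $k$ together with a controlled remainder, since only the degree-$(k-2)$ balance actually enters the argument.
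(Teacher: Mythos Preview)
Your proof is correct: the homogeneity bookkeeping is accurate, and in normal coordinates the metric expansion $g^{ij}=\delta^{ij}+O(|x|^2)$ together with $b^j=O(|x|)$ does force every term other than $\Delta_{\mathrm{flat}}\varphi_k$ to be $O(|x|^{k-1})$, so the degree-$(k-2)$ piece must vanish.

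As for comparison with the paper: there is nothing to compare. The paper does not prove this theorem at all; it is quoted as a known result of Bers \cite{Be}, in the formulation of Zelditch \cite{Z}, and used as a black box in the proofs of Theorems~\ref{thm:ICC} and~\ref{thm:angle_estimate_high_dim}. Your argument is the standard one and would serve perfectly well as a self-contained justification if the authors had chosen to include one.
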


We also use the following inradius estimate for real analytic metrics (see ~\cite{G}).
\begin{theorem}\label{Zeldiev}
	Let $(M, g)$ be a real-analytic closed manifold of dimension at
	least $3$. If $\Omega_\lambda$ is a nodal domain corresponding to the eigenfunction $\varphi_\lambda$, 
	then there exist constants $\lambda_0, c_1$ and $c_2$ which depend only
	on $(M, g)$, such that
	\beq\label{ra_inrad}
	\frac{c_1}{\lambda} \leq \inrad (\Omega_\lambda) \leq  \frac{c_2}{\sqrt{\lambda}}, \lambda \geq \lambda_0.\eeq
\end{theorem}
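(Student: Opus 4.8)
The plan is to treat the two inequalities separately: the upper bound is soft and follows from the general machinery already recalled, while the lower bound is the substantive part and is the only place where real-analyticity and the restriction $n\ge3$ are used.

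For the upper bound $\inrad(\Omega_\lambda)\le c_2/\sqrt\lambda$ I would argue exactly as in the proof of the Wavelength density theorem above. Since $\varphi_\lambda$ has constant sign on $\Omega_\lambda$, it is (up to sign) the ground state Dirichlet eigenfunction of $\Omega_\lambda$, so $\lambda_1(\Omega_\lambda)=\lambda$. If $B=B(x_0,r)$ is a largest ball inscribed in $\Omega_\lambda$, Domain Monotonicity gives $\lambda_1(B)\ge\lambda_1(\Omega_\lambda)=\lambda$, while $g$ is uniformly comparable to a Euclidean metric on balls of radius $\lesssim\inj(M)$, so comparing Rayleigh quotients yields $\lambda_1(B)\le C(M,g)/r^2$; hence $r\le c_2/\sqrt\lambda$. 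This half uses neither real-analyticity nor $n\ge3$.

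The lower bound $\inrad(\Omega_\lambda)\ge c_1/\lambda$ is considerably harder, and the strategy I would follow combines three ingredients. (i) Work at a maximum point: let $x_0\in\Omega_\lambda$ realise $\varphi_\lambda(x_0)=\max_{\Omega_\lambda}|\varphi_\lambda|$, normalised to $1$, and (arguing as in the general inradius theorem) assume the largest inscribed ball $B(x_0,r_0)$, $r_0:=\inrad(\Omega_\lambda)$, is centred at $x_0$, touching $\NNN_\varphi$ at some $q$. (ii) The real-analytic input: by Donnelly--Fefferman, on a real-analytic $(M,g)$ the order of vanishing of $\varphi_\lambda$ at every point --- and more generally its doubling exponent at every scale --- is $\le C(M,g)\sqrt\lambda$; it is precisely this clean $\sqrt\lambda$ bound that is unavailable for merely smooth metrics and that forces the hypothesis. (iii) Bers approximation at sub-wavelength scale: by Theorem \ref{Zildo}, combined with (ii) and the ``almost harmonic'' heuristic valid on balls of radius $\lesssim1/\sqrt\lambda$ (noted after the Fourier-synthesis table), near $q$ the function $\varphi_\lambda$ is, at scale $1/\sqrt\lambda$, modelled with controlled error by a harmonic polynomial of degree $k\le C\sqrt\lambda$. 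A nodal domain of such a polynomial, rescaled to length $1/\sqrt\lambda$, has inscribed balls of radius $\gtrsim (1/\sqrt\lambda)\cdot k^{-1}\gtrsim\lambda^{-1}$; propagating the positivity $\varphi_\lambda(x_0)=1$ outward by the interior Harnack inequality (legitimate since, after rescaling, $\varphi_\lambda$ solves $\Delta\psi=-\mu\psi$ with $\mu=O(\lambda^{-1})$ small on such balls), one should conclude that $\Omega_\lambda$ contains a ball of radius $\gtrsim\lambda^{-1}$ at $x_0$, i.e. $r_0\gtrsim\lambda^{-1}$. The ``wavelength over degree'' bookkeeping, $(1/\sqrt\lambda)/\sqrt\lambda=1/\lambda$, is exactly what produces the exponent $1$.

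I expect step (iii), turning the Donnelly--Fefferman bound into a genuine lower bound on the size of the positivity component of $\varphi_\lambda$ at $x_0$, to be the main obstacle. The delicate points are: $\varphi_\lambda$ does not vanish at $x_0$, so Bers' expansion is directly available only at the nearest nodal point $q$ and the estimate must be transported from $q$ back to $x_0$ --- and when $q$ is a singular point of $\NNN_\varphi$ the governing polynomial has degree $\ge2$, so the geometry of the resulting thin nodal cone tangent to the inscribed ball has to be handled carefully; and one needs the Bers approximation with explicit error control at the scale $\sim r_0$, which is exactly where the quantitative $\sqrt\lambda$-frequency bound, rather than a merely finite unquantified vanishing order, is indispensable. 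Finally, one should not expect the exponent $1$ to be sharp --- the two-dimensional value is $\tfrac12$ --- the gap reflecting the mismatch between the $\sqrt\lambda$ control on the degree and what is dissipated in passing between the $\sqrt\lambda$- and $\lambda$-scales.
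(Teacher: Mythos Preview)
The paper does not prove Theorem~\ref{Zeldiev}; it is quoted from \cite{G} (Georgiev) as an external input, to be applied to spherical harmonics in the proof of Theorem~\ref{thm:ICC}. There is therefore no proof in the present paper to compare your proposal against.

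That said, a brief comment on your outline. The upper bound argument is correct and is exactly the standard one. For the lower bound you have correctly identified both the crucial input (the Donnelly--Fefferman $\sqrt{\lambda}$ bound, which is where real-analyticity enters) and the target bookkeeping $(1/\sqrt{\lambda})/\sqrt{\lambda}=1/\lambda$. However, your step (iii) contains a real gap: the assertion that a nodal domain of a homogeneous harmonic polynomial of degree $k$ in $\RR^n$, $n\ge3$, has inradius $\gtrsim 1/k$ on the unit sphere is not elementary and does not follow from the Bers expansion alone. In dimension $2$ this is a consequence of Cheng's equiangular structure, but that structure fails for $n\ge3$, which is precisely why the higher-dimensional problem is harder. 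The published argument in \cite{G} cashes in the Donnelly--Fefferman input via a different mechanism (through measure/asymmetry estimates for the nodal set rather than a pointwise polynomial model at a single nodal point), so while your sketch has the right scale and the right ingredients, the route you propose for converting them into an inradius bound would need substantial additional work at exactly the point you flag as the main obstacle.
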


Since the statement of Theorem \ref{Zeldiev} is asymptotic in nature, we need to justify that if $\lambda < \lambda_0$, a nodal domain corresponding to $\lambda$ will still satisfy $\inrad (\Omega_\lambda) \geq \frac{c_3}{\lambda}$ for some constant $c_3$. This follows from the inradius estimates of Mangoubi in \cite{Ma}, which hold for all frequencies. Consequently, we can assume that every nodal domain $\Omega$ on $S^n$ corresponding to the spherical harmonic $\varphi_k(x)$, as in Theorem \ref{Zildo}, has inradius $\gtrsim \frac{1}{\lambda}$.

Now we start proving Theorem \ref{thm:ICC}.
\begin{proof}
    Since the eigenequation $-\Delta \varphi_\lambda  = \lambda \varphi_\lambda$ is satisfied at $p$, one can check that the proof of Theorem \ref{Zildo} above still works at $p$, for all $p \in M \cup \pa M$. We observe that Theorem \ref{Zeldiev} applies to spherical harmonics, and in particular the function $\text{exp}^*(\varphi_k)$, restricted to $S(T_{x_0}M)$, where $\varphi_k(x)$ is the homogeneous harmonic polynomial given by expanding $\varphi$ at $p$ in terms of $x \in M \cup \pa M$ given by Theorem \ref{Zildo}. 
	 Also, a nodal domain for any spherical harmonic on $S^2$ (respectively, $S^3$) corresponding to eigenvalue $\lambda$ has inradius $\sim \frac{1}{\sqrt{\lambda}}$ (respectively, $\gtrsim \frac{1}{\lambda^{7/8}}$).
	
	With that in place, it suffices to prove that
	\beq\label{suff}
	\Cal{S}_{x_0}\NNN_\varphi \subseteq \Cal{S}_{x_0}\NNN_{\varphi_k}.
	\eeq
Now by definition, $v \in \Cal{S}_{x_0}N_\varphi$ if there exists a sequence $x_n \in N_\varphi$ such that $x_n \to x_0$, $x_n = \text{exp} (r_nv_n)$, where $r_n$ are positive real numbers and $v_n \in S(T_{x_0}M)$, and $v_n \to v$.

 This gives us,
\begin{align*}
0 & = \varphi_\lambda(x_n)  = \varphi_\lambda(r_n\text{exp }v_n) \\
& = r_n^k\varphi_k(\text{exp }v_n) + \sum_{m > k}r_n^{m}\varphi_m(\text{exp }v_n)\\
& = \varphi_k(\text{exp }v_n) + \sum_{m > k}r_n^{m - k}\varphi_m(\text{exp }v_n)\\
& \to \varphi_k(\text{exp }v), \text{ as } n \to \infty.
\end{align*}

 Observing that $\varphi_k(x)$ is homogeneous, this proves (\ref{suff}).
 \end{proof}
 
Observe that Theorem \ref{thm:ICC} above tells us that the following two situations in Figure \ref{fig:impermissible angle} can never happen at the boundary for the nodal set of any eigenfunction (there is nothing specific about the second eigenfunction).  

\begin{figure}[ht]
    \centering
    \subfloat[\centering Opening angle=$0$]{{\includegraphics[width=4.2cm]{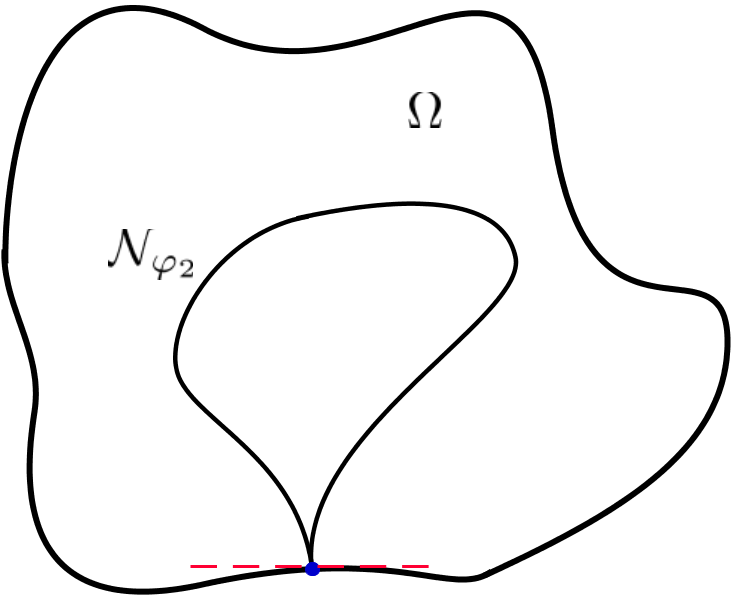} }}
    \qquad \qquad \qquad
    \subfloat[\centering Opening angle=$\pi$]{{\includegraphics[width=4.2cm]{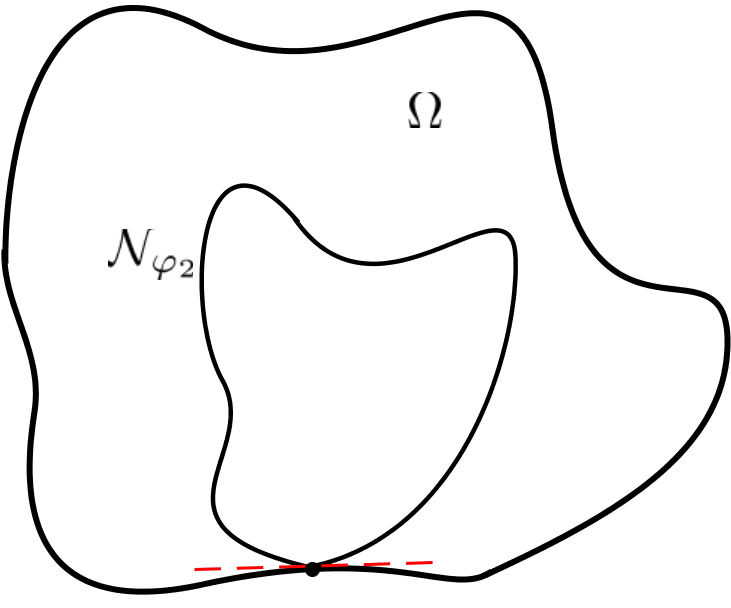} }}
    \caption{Impermissible angle of intersections for any bounded domain in $\RR^2$}
    \label{fig:impermissible angle}
\end{figure}

\begin{remark}\label{Cheng on boundary}
In $\dim M=2$, since any point $p\in \pa  M$ satisfies the eigenequation $-\Delta \varphi=\lambda\varphi$, the local expansion of Bers is true  on the boundary as well. Then using the above ideas of Cheng on the boundary, we have that if $p\in \pa M$ has $k^{th}$ order of vanishing then $\NNN_{\varphi}$ forms an equiangular junction at $p$ with respect to the tangent at $p$.

\end{remark}

\subsubsection{More precise estimates on opening angles}
We 
are now going to investigate in more detail the angle between two nodal hypersurfaces at a point of intersection. In some sense, our results here are going to be higher dimensional analogues of Cheng's result outlined in Remark \ref{Cheng on boundary}.

Very interestingly, such problems have been investigated from a completely different viewpoint in  classical Fourier analysis, namely, the existence of Heisenberg uniqueness pairs. For the sake of completeness, we include a basic definition here: 
\begin{defi}\label{def:Heisen_pairs}
    Let $M \subseteq \RR^n$ be a manifold and $\Sigma \subseteq \RR^n$ be a set. We say that $(M, \Sigma)$  is a Heisenberg uniqueness pair if the only finite measure $\mu$ supported on $M$ with Fourier transform vanishing on $\Sigma$ is $\mu = 0$. 
\end{defi}
As a typical example of the kind of result from HUP that motivates us, we quote the following result:
\begin{theorem}[Theorem 2.1, \cite{F-BGJ}]\label{thm:FBGJ}
    Let $n \geq 2$ and $\Omega$ be a domain in $\RR^n$ with $0 \in \Omega$. Let $\theta_1, \theta_2 \in S^{n-1}$ be such that $\arccos\langle \theta_1, \theta_2\rangle \notin \pi \mathbb{Q}$. Let $k \in \RR$ and let $u$ be a solution of the Laplace–Helmholtz equation on $\Omega$:
    $$
    \Delta u+k^2u = 0.
    $$
    If $u$ satisfies one of the following boundary conditions 
    \begin{align*}
        u = 0 \text{  on } \left( \theta_1^\perp \cap \Omega \right) \cup  \left( \theta_2^\perp \cap \Omega \right)
    \end{align*}
    or
    \begin{align*}
        u =  0 \text{  on }  \theta_1^\perp \cap \Omega,~~  \text{ and }~~ \pa_\eta u = 0 \text{  on }  \left( \theta_2^\perp \cap \Omega \right),
    \end{align*}
    then $u \equiv 0$.
\end{theorem}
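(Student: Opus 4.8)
The plan is to prove Theorem \ref{thm:FBGJ} by separating variables about the distinguished point $0\in\Omega$ and reducing it to a rigidity statement for homogeneous harmonic polynomials. Since $u$ solves a constant-coefficient Helmholtz equation it is real-analytic on $\Omega$, and by the unique continuation property for second-order elliptic operators it suffices to show $u\equiv 0$ on a small ball $B_\varepsilon(0)\subseteq\Omega$ (as $\Omega$ is connected). On such a ball, expand
\[
u(r\omega)=\sum_{\ell\geq 0}R_\ell(kr)\,H_\ell(\omega),\qquad r=|x|,~\omega=x/|x|\in S^{n-1},
\]
where $H_\ell$ is the degree-$\ell$ spherical-harmonic projection of $u(r\,\cdot\,)$, and $R_\ell(s)=s^{-(n-2)/2}J_{\ell+(n-2)/2}(s)$ is the radial-ODE solution regular at the origin, so $R_\ell(kr)=c_\ell r^\ell(1+O(r^2))$ with $c_\ell\neq 0$ as $r\to 0$.

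First I would use that for each fixed $\omega$ the function $r\mapsto u(r\omega)$ is real-analytic with expansion $\sum_\ell H_\ell(\omega)R_\ell(kr)$ whose terms have pairwise distinct vanishing orders at $r=0$; hence $\sum_\ell H_\ell(\omega)R_\ell(kr)\equiv 0$ forces $H_\ell(\omega)=0$ for every $\ell$. Applying this with $\omega$ ranging over the equator $E_1:=\theta_1^\perp\cap S^{n-1}$ (using $u=0$ on $\theta_1^\perp\cap\Omega$ and $r<\varepsilon$ small) gives $H_\ell|_{E_1}\equiv 0$ for all $\ell$; in the first boundary-condition case the same argument on $E_2:=\theta_2^\perp\cap S^{n-1}$ gives $H_\ell|_{E_2}\equiv 0$. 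For the mixed condition I observe that at a point $r\omega$ with $\omega\in E_2$ the normal $\eta=\theta_2$ is tangent to the sphere, so $\partial_\eta u$ restricted to $\theta_2^\perp$ equals $\tfrac1r(\nabla_{S^{n-1}}H_\ell)(\omega)\cdot\theta_2$; since the factors $R_\ell(kr)/r$ still have distinct orders, the Neumann condition yields $(\nabla_{S^{n-1}}H_\ell\cdot\theta_2)|_{E_2}\equiv 0$. Passing to the homogeneous harmonic polynomials $P_\ell$ on $\RR^n$ with $P_\ell|_{S^{n-1}}=H_\ell$, the problem becomes: a homogeneous harmonic polynomial vanishing on $\theta_1^\perp$ and either vanishing on $\theta_2^\perp$ or having vanishing $\theta_2$-derivative along $\theta_2^\perp$ must be zero, provided $\beta:=\arccos\langle\theta_1,\theta_2\rangle\notin\pi\mathbb{Q}$.

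The heart of the matter is this polynomial rigidity, which I would handle via reflections. If $P$ is harmonic and $P|_{\theta_1^\perp}=0$, then $P\circ\sigma_1$ and $-P$ (with $\sigma_1$ the orthogonal reflection in $\theta_1^\perp$) are both harmonic, agree on $\theta_1^\perp$, and have the same normal derivative there, so by Holmgren's uniqueness theorem (everything being real-analytic) they coincide: $P$ is \emph{anti}-invariant under $\sigma_1$. Likewise $P|_{\theta_2^\perp}=0$ gives anti-invariance under $\sigma_2$, while $\partial_{\theta_2}P|_{\theta_2^\perp}=0$ gives \emph{invariance} under $\sigma_2$. In the first case $P$ is invariant under $\sigma_2\sigma_1$, which acts on $\Pi:=\mathrm{span}(\theta_1,\theta_2)$ as the rotation by $2\beta$ and fixes $\Pi^\perp$ pointwise; in the mixed case $P$ is anti-invariant under $\sigma_2\sigma_1$, hence invariant under its square, the rotation by $4\beta$. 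Because $\beta/\pi$ is irrational, rotation by $2\beta$ (resp.\ $4\beta$) generates a dense subgroup of $SO(2)$, so by continuity $P$ is invariant under all rotations of $\Pi$; in coordinates $x=(x_1,x_2,x'')$ with $\Pi=\{x''=0\}$ and $\theta_1^\perp\cap\Pi=\{x_1=0\}$ this forces $P$ to depend on $(x_1,x_2)$ only through $x_1^2+x_2^2$, hence to be \emph{even} in $x_1$. But $\sigma_1$-anti-invariance makes $P$ \emph{odd} in $x_1$, so $P\equiv 0$ (the degree-$0$ case being trivial, since a nonzero constant cannot vanish on $\theta_1^\perp$). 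Thus $P_\ell\equiv 0$ for all $\ell$, so $u\equiv 0$ on $B_\varepsilon(0)$ and therefore on $\Omega$.

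The step I expect to demand the most care is the passage from the vanishing of the full series to the term-by-term vanishing of each $H_\ell$: one must justify convergence of the spherical-harmonic/Bessel expansion near the origin, identify the radial coefficients precisely with the regular solutions $R_\ell$, and argue that restricting $r\mapsto u(r\omega)$ to a ray genuinely yields a convergent power series whose coefficients can be read off — this is exactly where real-analyticity of Helmholtz solutions together with the asymptotics $R_\ell(kr)=c_\ell r^\ell(1+O(r^2))$, $c_\ell\neq 0$, does the work. A secondary (easy) point is the implication "$u=0$ on $\theta_i^\perp\cap\Omega$" $\Rightarrow$ "$H_\ell=0$ on all of $E_i$", which holds once $\varepsilon$ is small enough that $\{|x|<\varepsilon\}\subseteq\Omega$, so the entire $(n-2)$-sphere $rE_i$ lies in $\theta_i^\perp\cap\Omega$ for $r<\varepsilon$.
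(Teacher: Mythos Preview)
The paper does not prove this theorem; it is quoted verbatim from \cite{F-BGJ} as motivation, so there is no ``paper's own proof'' to compare against. Your argument is correct and self-contained.

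That said, it is worth contrasting your approach with the technique the paper adopts (following \cite{F-BGJ}) in its proof of the closely related Theorem~\ref{thm:angle_estimate_high_dim}. There the authors expand the eigenfunction near the origin in the explicit spherical-harmonic basis built from Gegenbauer polynomials, $Y_\alpha(r,\theta,\varphi)=r^{|\alpha|}e^{i\alpha_{n-1}\varphi}\tilde Y_\alpha(\theta)$, and then use linear independence of the $\tilde Y_{(\beta,m)}$ to reduce the vanishing conditions on the two hypersurfaces to a $2\times 2$ linear system for each pair $c_{\beta,m},c_{\beta,-m}$, with determinant $2i\sin m(\varphi_1-\varphi_2)$; irrationality of the angle forces every coefficient to vanish. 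Your route is more geometric: you reduce to a homogeneous harmonic polynomial $P_\ell$ and exploit that the vanishing (resp.\ Neumann) condition on $\theta_j^\perp$ makes $P_\ell$ anti-invariant (resp.\ invariant) under the reflection $\sigma_j$, so that $P_\ell$ is invariant under the rotation $\sigma_2\sigma_1$ (or its square) by an irrational multiple of $\pi$, hence under all of $SO(2)$ by density, which clashes with the odd parity in the $\theta_1$-direction. The advantage of your argument is that it is coordinate-free and treats the Dirichlet/Neumann mixed case on the same footing with a clean parity punchline; the Gegenbauer-system approach has the merit of giving, almost for free, the finer quantitative statement needed in Theorem~\ref{thm:angle_estimate_high_dim}, namely that only finitely many rational angles $\tfrac{p}{q}\pi$ with $q\leq n_0$ are admissible at a point of vanishing order $n_0$.
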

As is clear (and also mentioned by the authors in \cite{F-BGJ}), in dimension $n = 2$ the above theorem follows from  results in \cite{Ch}. We also observe that Theorem \ref{thm:ICC} above also shows that the angle between $\theta_j, j = 1, 2$ cannot be arbitrarily small (depending on the eigenvalue).

Now we begin proving Theorem \ref{thm:angle_estimate_high_dim}. Our proof is a modification of ideas in \cite{F-BGJ}.
\begin{proof}

Recall, we  would like to show that if the order of vanishing of $\varphi_\lambda$ at $p$ is $n_0$, then the angle between $M_1$ and $M_2$ at $p$,  $\arccos{\langle\eta_1, \eta_2 \rangle}\in P$, where $p$ lies in the intersection of two nodal hypersurfaces $M_1$ and $M_2$, and 
$$P=\left\{\frac{p}{q}\pi: q=1, 2, \cdots n_0, p= 0, 1, \cdots, q\right\}.$$
If possible, let $\arccos{\langle\eta_1, \eta_2 \rangle}\notin P$ where $\eta_i$, is a unit normal to $M_1$ at $p$. Without loss of generality, we can assume that $ p = 0$, the origin in $\RR^n$. Consider the spherical coordinates in $\RR^n$, $(r, \theta, \varphi)$ where $r\geq 0$, $\theta:= (\theta_1, \cdots, \theta_{n-2})\in [0, \pi)^{n-2}$, $\varphi\in [0, 2\pi)$.

It is known that, $\{Y_{\alpha}: \alpha\in \mathcal{S}:=  \NN_0^{n-2}\times \ZZ \}$ forms a basis of spherical harmonics where
$$Y_{\alpha}(r, \theta, \varphi)= r^{|\alpha|} \exp{(i\alpha_{n-1}\varphi)}\tilde{Y}_{\alpha}(\theta),$$
with $\displaystyle \tilde{Y}_{\alpha}(\theta) := \prod_{i=1}^{n-2}(\sin \theta_{n-i})^{|\alpha|^{i+1}}C_{\alpha_i}^{\gamma_i}(\cos \theta_i)$, and $|\alpha|^i= \alpha_i+ \alpha_{i+1 + \cdots+ \alpha_{n-1}}$, $\gamma_i= |\alpha|^{i+1}+(n-i-1)/2$, where  $C_{\alpha_i}^{\gamma_i}$ are the Gegenbauer polynomials. From the orthogonality of $C_n^\gamma$, for each $n \geq 1$, notice that the set 
$$
\left\{\tilde{Y}_{(\beta, m)}: (\beta, m)\in \NN_0^{n-1}, |\beta|+m=n \right\}, \NN_0 := \NN \cup \{ 0\}$$
is linearly independent.


Consider $V_\epsilon$ be an open ball around $0$. Then $M_i\cap V_\epsilon$ can be parametrized in polar coordinates as
$$
M_i\cap V_\epsilon= \left\{(r, \theta, \psi_i(r, \theta)), 0\leq r<\epsilon, \theta \in [0, \pi)^{n-2}\right\},
$$
where $\psi_i(r, \theta)\in S^1$ and $\psi_i$'s are smooth functions.

Defining $\varphi_i(\theta) : = \lim_{r\to \theta} \psi_i(r, \theta)$, from our assumption $\arccos{\langle\eta_1, \eta_2 \rangle}\notin P$, it follows that $\varphi_1-\varphi_2\notin P$. 


Since the order of vanishing of $\varphi_\lambda$ at $0$ is $n_0$, using Theorem \ref{Zildo}, the solution $\varphi_\lambda$ of $(\Delta+\lambda)\varphi_\lambda = 0$ in $M_i\cap V_\epsilon$ can be expressed in spherical coordinates in the form

\begin{equation}
    \varphi_\lambda(r, \theta, \psi_i(r, \theta))= r^{n_0}\sum_{m=-n_0}^{n_0}\left(\sum_{|\beta|+|m|=n_0} c_{\beta, m}\tilde{Y}_{\beta, m}(\theta) \right)e^{im\varphi_i} + o(r^{n_0}).
\end{equation}
Since, $\varphi_\lambda = 0$ on $M_i\cap V_\epsilon$, as $r\to 0$, it follows that,
\begin{equation*}
    \sum_{m=-n_0}^{n_0}\left(\sum_{|\beta|+|m|=n_0} c_{\beta, m}\tilde{Y}_{\beta, m}(\theta) \right)e^{im\varphi_i} = 0,
\end{equation*}
that is
\begin{equation}
    \sum_{|\beta|=n_0} c_{\beta, 0}\tilde{Y}_{\beta, 0}(\theta)+ \sum_{m=1}^{n_0}\left(\sum_{|\beta|+m=n_0} (c_{\beta, m}e^{im\varphi_i}+ c_{\beta, -m}e^{-im\varphi_i})\tilde{Y}_{\beta, m}(\theta) \right)=0
\end{equation}
Since $\{\tilde{Y}_{(\beta,m)}\}$ is linearly independent, $c_{\beta, 0}=0$ whenever $|\beta|=n_0$ and for each $m=1, 2, \cdots, n_0$, we have $n_0$ system of equations
$$c_{\beta, m}e^{im\varphi_1} + c_{\beta, -m}e^{-im\varphi_1}=0, $$
$$c_{\beta, m}e^{im\varphi_2} + c_{\beta, -m}e^{-im\varphi_2}=0. $$
Notice that, the determinant of each of the above systems is $2i\sin m(\varphi_1-\varphi_2), m=1, 2, \cdots, n_0$.
 If 
 $$(\varphi_1-\varphi_2)\notin \left\{\frac{p}{q}\pi: q=1, 2, \cdots n_0, p= 0, 1, \cdots, q\right\},$$
then each of above $n_0$ determinants is non-zero, which forces each $c_{\beta, m}=c_{\beta, -m}=0$, which implies that the coefficient of $r^{n_0}$ is zero. But this contradicts the fact that $\varphi_\lambda$ has $n_0$ order of vanishing at $0$. 
So,
$$\arccos{\langle\eta_1, \eta_2 \rangle}\in P.$$
\end{proof}
\begin{remark}
Recall the celebrated result of \cite{DF} that any $\lambda$-eigenfunction $\varphi_\lambda$ vanishes to at most order $c(M, g)\sqrt{\lambda}$ for any point in $M$. Also recall, from \cite{HS} that, the nodal set contains smooth $(n-1)$ dimensional submanifolds having finite $(n-1)$-dimensional measure in each compact subset of $\Omega$ and a closed countable $(n-2)$-rectifiable set. Then, using our result above, we have that whenever two 
nodal hypersurfaces intersect, the admissible angles between such intersecting hypersurfaces is 
from the set
\beq \label{def:set_P_adm_ang}
P =\left\{\frac{p}{q}\pi: q=1, 2, \cdots ,  [c\sqrt{\lambda}],\; p= 0, 1, \cdots, q\right\}.
\eeq
Also, using Theorem \ref{thm:ICC} or its interior case as in \cite{GM1}, we can rule out the cases when $p=0, q$ for every $q=1, 2, \cdots c\sqrt{\lambda}$. Then 
one sees that the minimum 
angle (in the sense of Theorem \ref{thm:angle_estimate_high_dim}) between two nodal hypersurfaces is $\gtrsim \frac{1}{\sqrt{\lambda}}$. 
\end{remark}

To sum up the discussion so far: consider a point $x \in \NNN_\varphi$. Then the opening angle of a nodal domain at $x$ will in general be given by Definition \ref{def:int_cone_cond}.  However, if $x$ happens to lie at the intersection of some nodal hypersurfaces, then the angle between any pair of such nodal hypersurfaces will  come from the set $P$ in (\ref{def:set_P_adm_ang}).

 \begin{figure}[ht]
\centering
\includegraphics[scale=0.18]{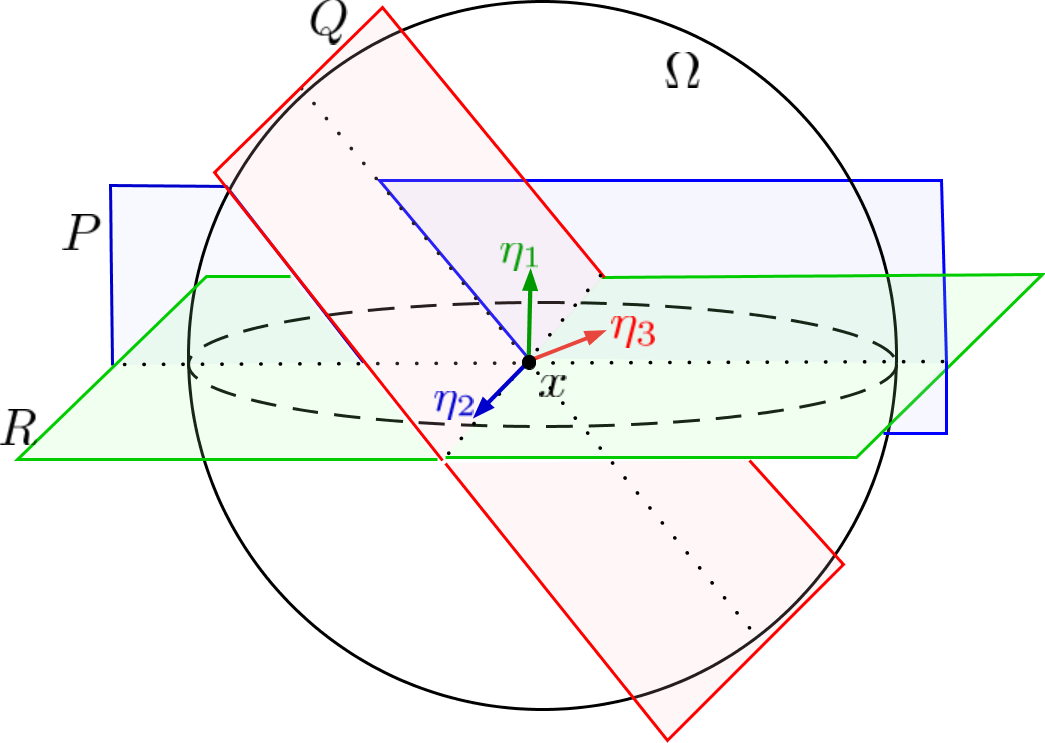}
\caption{$P, Q,$ and $R$ denote three nodal hypersurfaces of $\Omega$}
\label{fig:interpretation of opening angles}
\end{figure}

\section{Further applications: perturbation theory and low spectral gaps}\label{sec:perturbation} 

We study variation of Dirichlet Laplace spectrum and corresponding Laplace eigenfunctions with $C^2$ variations of bounded Euclidean domains, particularly those domains $\Omega$ which are critical points of $\lambda_2 - \lambda_1$.

Let the smooth deformation space of $\Omega$ be given by a Banach manifold $\Cal{B}$. First, we prove the following:
\begin{theorem}\label{thm:generic_spec}
The set of points inside $\Cal{B}$ (each represented by a perturbation of our starting domain $\Omega$) such that the Dirichlet Laplacian has simple spectrum is a residual set. 
\end{theorem}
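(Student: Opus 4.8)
The plan is to exhibit the simple-spectrum locus as a countable intersection of open dense subsets of $\Cal{B}$ and then invoke the Baire category theorem (a Banach manifold is completely metrizable, hence a Baire space). For $N\in\NN$, let $\Cal{A}_N\subseteq\Cal{B}$ be the set of deformations for which $\lambda_1,\dots,\lambda_N$ are all simple (equivalently $\lambda_{N-1}<\lambda_N$, together with $\Omega'\in\Cal{A}_{N-1}$). Then the set of deformations with simple Dirichlet spectrum is exactly $\bigcap_N\Cal{A}_N$, so it suffices to show each $\Cal{A}_N$ is open and dense. Openness is routine: pulling back by a family of diffeomorphisms $\Phi_t:\Omega\to\Omega_t$ turns the eigenvalue problem into one for a self-adjoint elliptic operator $L_t$ on the fixed form domain $H^1_0(\Omega)$, depending continuously (indeed real-analytically, for a smooth family) on $t$; a simple eigenvalue isolated from the rest of the spectrum stays simple and isolated under small perturbations, so the finite list $\lambda_1,\dots,\lambda_N$ remains simple nearby.

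Density is proved by induction on $N$, the case $N=1$ being automatic since the Dirichlet ground state is always simple (see the remark above). Assume $\Cal{A}_{N-1}$ is open and dense; given $\Omega_0\in\Cal{B}$ and $\varepsilon>0$, first move inside $\Cal{A}_{N-1}$ to within $\varepsilon/2$ of $\Omega_0$. So we may assume $\lambda_1,\dots,\lambda_{N-1}$ are already simple at the base domain, and it remains only to break a possible multiplicity $m\ge 2$ of $\lambda:=\lambda_N$ by an arbitrarily small deformation while staying in the open set $\Cal{A}_{N-1}$. Let $\varphi_1,\dots,\varphi_m$ be an $L^2$-orthonormal basis of the $\lambda$-eigenspace. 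For a deformation generated by a vector field $V$ with normal component $\rho=\langle V,\eta\rangle$ on $\pa\Omega$, Hadamard's variational formula states that the first-order behaviour of the $\lambda$-branches is governed by the eigenvalues of the symmetric matrix
\[
M_{ij}(\rho)=-\int_{\pa\Omega}\frac{\pa\varphi_i}{\pa\eta}\,\frac{\pa\varphi_j}{\pa\eta}\,\rho\,dS .
\]
If $M(\rho)$ is ever not a scalar multiple of the identity, then $\lambda$ has at least two distinct first-order branches, and by standard (Rellich–Kato) perturbation theory for the analytic self-adjoint family $L_t$ the multiplicity of $\lambda_N$ strictly drops along that path for all small $t\neq 0$; iterating this at most $m-1$ times inside $\Cal{A}_{N-1}$ yields a domain within $\varepsilon$ of $\Omega_0$ lying in $\Cal{A}_N$, completing the induction.

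Thus everything reduces to one assertion, which I expect to be the only substantive point: it is impossible that $M(\rho)\in\RR\cdot\mathrm{Id}$ for every admissible $\rho$ (and any reasonable deformation space contains bump functions supported near any boundary point, which is all we use). Indeed, if this held then, reading off the off-diagonal entries, $\int_{\pa\Omega}\pa_\eta\varphi_i\,\pa_\eta\varphi_j\,\rho\,dS=0$ for all $\rho$ and all $i\neq j$, forcing $\pa_\eta\varphi_i\cdot\pa_\eta\varphi_j\equiv 0$ on $\pa\Omega$. But $\pa_\eta\varphi_i$ cannot vanish on any relatively open piece $\Gamma\subseteq\pa\Omega$: there $\varphi_i$ would have vanishing Cauchy data, and extending $\varphi_i$ by zero across $\Gamma$ would give a nontrivial solution of $-\Delta u=\lambda u$ vanishing on an open set, contradicting the Aronszajn unique continuation theorem. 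Hence each $\{\pa_\eta\varphi_i=0\}$ is closed with empty interior in $\pa\Omega$, so is their (finite) union, and on the complementary dense open set $\pa_\eta\varphi_i\,\pa_\eta\varphi_j$ is nonzero — a contradiction. This settles the splitting step, and the rest of the proof is bookkeeping with Baire category and continuity/analyticity of eigenvalues.
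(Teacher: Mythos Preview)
Your proof is correct and complete, but it follows a genuinely different organizational route from the paper. The paper sets up the Sard--Smale transversality machinery of \cite{U}: it pulls the eigenproblem back to a fixed domain, views the resulting map $\Phi:H\times\Cal{B}\to E$ as a Fredholm family, and shows $0$ is a regular value by checking that the first-variation pairing $\int_{\pa\Omega}\pa_\eta\varphi\,\pa_\eta\psi\,(V\cdot\eta)\,dS$ cannot vanish for all $V$ unless $\varphi,\psi\equiv 0$ (via Holmgren). Your argument avoids the abstract transversality theorem entirely and works by hand: Baire category on the sets $\Cal{A}_N$, induction on $N$, and the Hadamard matrix $M(\rho)$ together with Rellich--Kato branching to split a multiple eigenvalue. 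The substantive analytic content --- that $\pa_\eta\varphi_i\,\pa_\eta\varphi_j$ cannot vanish identically on $\pa\Omega$ because each $\pa_\eta\varphi_i$ has empty-interior zero set by unique continuation --- is the same in both, just packaged differently.

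What each buys: your route is more elementary and self-contained, essentially the classical argument one finds in Henrot's book or in the line of Albert/Micheletti, and it makes the iteration (drop multiplicity by one, stay in $\Cal{A}_{N-1}$, repeat) completely transparent. The paper's route is shorter once the Sard--Smale theorem is granted, and it situates the result within the general transversality framework that also yields the other genericity statements in \cite{U}. Neither is strictly stronger; they are two standard packagings of the same core fact.
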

We note that Theorem \ref{thm:generic_spec} is not new, for example see Example 3, Section 4 of \cite{U}. But we give our 
own proof, based on the perturbation formalism of \cite{GS}. 
%

Results of the nature of Theorem \ref{thm:generic_spec} 
are ultimately based on  transversality phenomena (as illustrated in 
\cite{U}). Loosely speaking, they can be considered infinite dimensional analogues of the following statement: generically, all symmetric matrices have non-repeated 
eigenvalues. At a more basic level, an equivalent statement is the fact that single variable polynomials generically have non-repeated roots. 

\subsection{Proof of Theorem \ref{thm:generic_spec}}\label{subsec:prelim}
The topic of variation of spectra under perturbation has a long history starting with the analytic perturbation theory of 
Kato (see \cite{Ka}). In the case of rather generic families of elliptic operators, see pioneering work in \cite{A} and \cite{U}. 
In case the perturbation is non-generic, such results have been recently studied in, for example, \cite{HJ1, HJ2, Mu} etc. In this note, we give a slight variant of a proof for Theorem \ref{thm:generic_spec}. To set up the stage, we start by considering a bounded domain $\Omega \subset \RR^n$, and consider a vector field $V$ defined on $\RR^n$, whose coordinates we denote by 
$(V_1,..., V_n)$, and whose regularity we assume to be $C^2$ for immediate purposes. Now, consider the perturbation 
of the domain $\Omega$ along the vector field $V$ to the domain $\Omega_\varepsilon$, defined by 
$\{ x^\varepsilon = x + \varepsilon V : x \in \Omega\}$. We wish to study the variation of the eigenequation 
\beq\label{eq:eigen}
-\Delta \varphi = \lambda \varphi
\eeq 
along the parameter $\varepsilon$. However, to fit the language of perturbation theory, instead of dealing with a one-parameter family of domains, it is much more convenient to pull back all $\Omega_\varepsilon$ 
to the original domain $\Omega$, so that we get a one-parameter family of elliptic PDEs on $\Omega$ whose coefficients are dependent on 
$\varepsilon$. This lands us in the familiar framework of a family of self-adjoint operators with common domain of definition 
varying over a Banach manifold. Upon computation following \cite{GS} (see Sections 4, 5 and particularly pp. 299, Section 6), 
we see that the eigenequation 
$$
-\Delta_\varepsilon \varphi_\varepsilon = \lambda_\varepsilon \varphi_\varepsilon
$$ 
on $\Omega_\varepsilon$, when pulled back to $\Omega$, becomes

\beq\label{eq:pull_back_eigen}
A_\varepsilon u = \sum_{j, k}-\pa_k (J\beta_{kj}\pa_j u) = \lambda_\varepsilon Ju,
\eeq
with the Dirichlet boundary condition being preserved, and where $J$ is the determinant of the Jacobian matrix of the transformation 
$x \mapsto x^\varepsilon$, and $\beta_{jk} = \sum_l \frac{\pa x_j}{\pa x^\varepsilon_l}\frac{\pa x_k}{\pa x^\varepsilon_l}$. 
To see this, write 
\begin{align*}
(A_\varepsilon u, v)_{L^2(\Omega)} & = (-\Delta_\varepsilon u^\varepsilon, v^\varepsilon)_{L^2(\Omega_\varepsilon)} = \int_{\Omega_\varepsilon} -\Delta_\varepsilon u^\varepsilon v^\varepsilon dx^\varepsilon\\
& = \sum_k\int_{\Omega_\varepsilon} \pa_{x_k^\varepsilon} u^\varepsilon \pa_{x_k^\varepsilon} v^\varepsilon dx^\varepsilon = \sum_{i, j, k}\int_\Omega \pa_{x_j} u \frac{\pa x_j}{\pa_{x_k^\varepsilon}} \pa_{x_i} v \frac{\pa x_i}{\pa_{x_k^\varepsilon}} J dx\\
& = \int_\Omega -\pa_{x_i}(J \frac{\pa x_i}{\pa_{x_k^\varepsilon}}\frac{\pa x_j}{\pa_{x_k^\varepsilon}} \pa_{x_j}u)\; v\; dx.
\end{align*}
The main idea behind the computation is that since
$$
\frac{\pa x^\varepsilon_j}{\pa x_i} = \delta_{ij} + \varepsilon \frac{\pa V_j}{\pa x_i} 
$$
up to first order errors in $\varepsilon$, we can write that 
$$
\frac{\pa x_j}{\pa x^\varepsilon_k} = \delta_{jk} - \varepsilon \frac{\pa V_j}{\pa x_k} + O(\varepsilon),
$$
whereas $J$ can be expressed as
$$
J = 1 + \varepsilon (\sum_j \frac{\pa V_j}{\pa x_j}) + O(\varepsilon),
$$
and
$$
\beta_{jk} = \delta_{jk} - \varepsilon\left( \frac{\pa V_j}{\pa x_k} + \frac{\pa V_k}{\pa x_j}\right) + O(\varepsilon),
$$
and $\beta_{jk}$ has a power series expansion 
$$
\beta_{jk} = \delta_{jk} + \sum_{i = 1}^\infty \varepsilon^i \beta^i_{jk},
$$
where $\beta^1_{jk} = - \left( \frac{\pa V_j}{\pa x_k} + \frac{\pa V_k}{\pa x_j}\right)$, as mentioned before. For details on the above, see \cite{GS}, Section 6.

All told then, the perturbation $A_\varepsilon$ can be expressed as 
\beq\label{eq:var_eps_op}
A_\varepsilon u = -\Delta u + \varepsilon \left( \sum_{j, k} \pa_k \left( (\pa_k V_j + \pa_j V_k) \pa_j u\right) + \frac{1}{2} \sum_j \pa_j u 
\Delta \left(  V_j\right)\right) + O(\varepsilon).
\eeq

Now we bring in the Sard-Smale transversality formalism used by Uhlenbeck. We first quote the theorem:
\begin{theorem}\label{thm:Sard_Smale}
Let $\Phi : H \times B \to E$ be a $C^k$ map, where $H, B$ and $E$ are Banach manifolds with $H$ and $E$ separable. If $0$ is a regular 
value of $\Phi$ and $\Phi_b := \Phi (., b)$ is a Fredholm map of index $< k$, then the set $\{ b \in B : 0 \text{  is a regular value of  }
\Phi_b\}$ is residual in $B$.
\end{theorem}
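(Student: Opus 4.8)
The plan is to reduce the parametric statement to the non-parametric Sard--Smale theorem for Fredholm maps via the \emph{universal zero set} construction. First I would set $\mathcal{M} := \Phi^{-1}(0) \subseteq H \times B$. Since $0$ is a regular value of $\Phi$, at each $(h,b) \in \mathcal{M}$ the differential $D\Phi(h,b)\colon H \times B \to E$ is surjective, and using that $D_h\Phi(h,b) = D\Phi_b(h)$ is Fredholm one checks by a standard argument that $\ker D\Phi(h,b)$ admits a closed complement in $H \times B$; hence by the implicit function theorem $\mathcal{M}$ is a $C^k$ Banach submanifold with $T_{(h,b)}\mathcal{M} = \ker D\Phi(h,b)$. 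I would then study the restriction to $\mathcal{M}$ of the canonical projection, $\pi := \mathrm{pr}_B|_{\mathcal{M}}\colon \mathcal{M} \to B$, which is $C^k$ since it is the restriction of a bounded linear map to a $C^k$ submanifold.

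The heart of the reduction is two pieces of elementary linear algebra at a point $(h,b) \in \mathcal{M}$; write $A = D_h\Phi(h,b)$ and $C = D_b\Phi(h,b)$, so that $T_{(h,b)}\mathcal{M} = \{(\dot h, \dot b) : A\dot h + C\dot b = 0\}$ and $D\pi(h,b)(\dot h,\dot b) = \dot b$. One has $\ker D\pi(h,b) \cong \ker A$, and the map $\dot b \mapsto C\dot b + \mathrm{range}\,A \in E/\mathrm{range}\,A$ has kernel exactly $\mathrm{range}\,D\pi(h,b)$ and, using surjectivity of $A \oplus C$, is onto, giving $\mathrm{coker}\,D\pi(h,b) \cong \mathrm{coker}\,A$. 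Consequently $D\pi(h,b)$ is Fredholm with $\mathrm{ind}\,D\pi(h,b) = \mathrm{ind}\,A = \mathrm{ind}\,\Phi_b < k$, so $\pi$ is a $C^k$ Fredholm map of index $<k$; moreover $D\pi(h,b)$ is surjective if and only if $A$ is, i.e.\ if and only if $h$ is a regular point of $\Phi_b$. Since $\pi^{-1}(b) = \Phi_b^{-1}(0) \times \{b\}$, this gives the key equivalence: $b$ is a regular value of $\pi$ if and only if $0$ is a regular value of $\Phi_b$.

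It then remains to invoke the non-parametric Sard--Smale theorem (Smale's infinite-dimensional Sard theorem): a $C^k$ Fredholm map of index $<k$ from a Lindel\"{o}f $C^k$ Banach manifold to a Banach manifold has a residual set of regular values. Applied to $\pi\colon \mathcal{M} \to B$ --- the Lindel\"{o}f property of $\mathcal{M}$ following from separability of $H$ together with finite-dimensionality of the kernels $\ker D_h\Phi$, which is what makes $\mathcal{M}$ second countable in charts --- this yields that the regular values of $\pi$ are residual in $B$, and by the equivalence above this set is precisely $\{\, b \in B : 0 \text{ is a regular value of } \Phi_b \,\}$. For completeness I would sketch the proof of the input theorem: locally, by a Lyapunov--Schmidt decomposition adapted to the Fredholm splitting, $\pi$ takes the normal form $(x,v)\mapsto(g(x,v),v)$ with $x$ in a neighbourhood in $\RR^{d}$, $d = \dim\ker$, and $g$ valued in $\RR^{c}$, $c = \dim\mathrm{coker}$, so that $d-c = \mathrm{ind}\,\pi < k$; classical Sard (applicable since $k > \max\{1, d-c\}$) shows that for each fixed $v$ the critical values of $g(\cdot,v)$ form a Lebesgue-null set, a Fubini-type argument together with local properness of Fredholm maps along fibres upgrades this to a nowhere dense set, and Baire category over a countable atlas of $\mathcal{M}$ gives the residual conclusion.

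The main obstacle is exactly this last input --- the non-parametric Sard--Smale theorem --- rather than the (soft) parametric reduction: one must carry the local normal form out in $C^k$ regularity, check that the differentiability threshold matches what classical Sard needs ($C^{\max\{1,\,\mathrm{ind}+1\}}$, which is precisely the content of the hypothesis $\mathrm{ind}\,\Phi_b < k$), and handle the passage from ``every fibre slice of the critical set is null'' to ``the critical values are nowhere dense in $B$'', which is where the Lindel\"{o}f property of $\mathcal{M}$ (hence the separability hypotheses on $H$ and $E$) is genuinely used so that Baire category can be applied on the target.
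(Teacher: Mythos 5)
The paper does not prove this statement at all: it is explicitly \emph{quoted} (it is the Smale--Sard parametric transversality theorem in the form used by Uhlenbeck in \cite{U}, Theorem 1; see also \cite{S, Ab}), and the surrounding text only verifies its hypotheses for the domain-perturbation operators $A_\varepsilon$. So there is no in-paper proof to compare against. Your reduction is the canonical one from those references -- pass to the universal zero set $\mathcal{M}=\Phi^{-1}(0)$, show the projection $\pi:\mathcal{M}\to B$ is Fredholm with $\ker D\pi\cong\ker D\Phi_b$ and $\operatorname{coker} D\pi\cong\operatorname{coker} D\Phi_b$, identify regular values of $\pi$ with parameters $b$ for which $0$ is regular for $\Phi_b$, and invoke the non-parametric Sard--Smale theorem -- and the linear algebra you carry out (splitting of $\ker D\Phi$, surjectivity of the induced map to $E/\operatorname{range}D\Phi_b$) is correct.

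One technical claim in your sketch is wrong as stated, though it is repairable. You assert that $\mathcal{M}$ is Lindel\"{o}f because separability of $H$ plus finite-dimensionality of $\ker D_h\Phi$ makes it ``second countable in charts.'' But the charts of $\mathcal{M}$ are modelled on $\ker D\Phi(h,b)=T_{(h,b)}\mathcal{M}$, not on $\ker D_h\Phi$, and $T_{(h,b)}\mathcal{M}$ contains (via $D\pi$) a closed finite-codimensional subspace of $T_bB$; since $B$ is not assumed separable, $\mathcal{M}$ need not be Lindel\"{o}f. The standard remedy is to note that meagerness is a local property of subsets of $B$ (Banach category theorem), so one may localize to a chart $V\subseteq B$ and then use the separability of $H$ -- which makes each fibre $\Phi_b^{-1}(0)$ a Lindel\"{o}f subset of $H$ -- together with local properness of Fredholm maps in the Lyapunov--Schmidt normal form to cover the relevant piece of the critical set by countably many boxes $U_i\times V$ on each of which the critical values form a closed nowhere dense subset of $V$. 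This is precisely where the separability hypotheses on $H$ and $E$ enter; with that correction your argument is the standard proof.
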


Here, we wish to check Theorem \ref{thm:Sard_Smale} for our domain perturbations in the particular setting that $H = E = \Cal{D}(\Delta)$ and $B$ is the collection of parameters for domain perturbation. For starters, all $A_\varepsilon$ are self-adjoint by the Kato-Rellich theorem, being relatively bounded perturbations of $A_0 = \Delta_\Omega$. Also, ellipticity in such cases implies the Fredholm property, as is well-known.

Now, if $0$ is not a regular value as above, we have that
for all perturbations given by $V$ and $\varepsilon$ small, we have Laplace eigenfunctions $\varphi, \psi$ ($\psi$ corresponding to the eigenvalue 
$\lambda$) such that
\begin{align*}\label{eq:lin_dep_trans}
-\int_\Omega \sum_{jk} \left( \left( \pa_k V_j + \pa_j V_k\right) \pa_j \varphi \pa_k \psi\right) + \frac{\varphi\psi}{2} \Delta \left(
\sum_j \pa_j V_j\right) & = - \int_{\pa \Omega} \frac{\pa \varphi}{\pa \eta}\frac{\pa \psi}{\pa \eta} \left(\sum_k V_k\eta_k\right)\\
& = 0. 
\end{align*}
This basically means that by Holmgren's uniqueness theorem, $\varphi$ and $\psi$ are identically zero, establishing our claim.

\subsection{Discussion on some variants of Theorem \ref{thm:generic_spec}}\label{subsec:gen_spec_convex}
Above we discussed the generic spectral simplicity of Euclidean domains in $\RR^n$. However, it is a valid question to ask what kind of generic properties the spectrum has provided the word ``generic'' is constrained on a much smaller moduli space. For example, we refer the readers to recent work in \cite{HJ1, HJ2}. Here, we outline a short result to illustrate the line of thought in these later works. Consider the family $\Cal{F}$ of all domains $\Omega$ in the plane whose boundary can be written as $\pa \Omega = \Cal{S} \cup \Cal{C}$, where 
\begin{enumerate}
    \item $\Cal{S}$ is a disjoint collection of straight line segments and $\Cal{C}$ is a disjoint collection of strictly curved real analytic line segments, and
    \item $\Cal{S} \cap \Cal{C}$ is a finite collection of points.
\end{enumerate}
\begin{claim}
The subcollection of domains in $\Cal{F}$ which have simple Dirichlet spectrum is a residual set.
\end{claim}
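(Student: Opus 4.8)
The plan is to re-run the Sard--Smale argument used for Theorem~\ref{thm:generic_spec}, restricting the perturbation space to deformations that keep the domain inside $\Cal{F}$. Fix $\Omega\in\Cal{F}$ with $\pa\Omega=\Cal{S}\cup\Cal{C}$, write $\ell_1,\dots,\ell_m$ for the lines carrying the segments of $\Cal{S}$, and let $\mathfrak{V}$ be a small ball around $0$ in the Banach space of real-analytic vector fields on a fixed complex neighbourhood of $\overline{\Omega}$ (sup norm), cut out by the closed conditions ``$V|_{\ell_i}$ is an affine function of arclength for each $i$''. For $V\in\mathfrak{V}$ the perturbed domain $\Omega_V:=(I+V)(\Omega)$ again lies in $\Cal{F}$: $V$ analytic keeps each component of $\Cal{C}$ a strictly curved real-analytic arc (for $V$ small), the affineness condition sends each straight segment to a straight segment, and the finitely many corner points, being moved continuously, still glue the pieces together. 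Thus $\mathfrak{V}$ is a legitimate chart of $\Cal{F}$ about $\Omega$, and it suffices to prove that the simple-spectrum locus is residual in each such chart.

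Next, everything in Section~\ref{subsec:prelim} transfers verbatim: pulling $-\Delta$ back from $\Omega_V$ to $\Omega$ produces a family $A_V$ of self-adjoint (Kato--Rellich), Fredholm of index $0$, elliptic operators on $\Omega$ with the common domain $\Cal{D}(\Delta)$, $A_0=-\Delta$, and first-order term given by the first-order part of \eqref{eq:var_eps_op}. Following Uhlenbeck, one forms the map whose regular values encode simplicity of the first $N$ eigenvalues, checks ellipticity $\Rightarrow$ Fredholm as before, and invokes Theorem~\ref{thm:Sard_Smale}; the residual set one ends up with is $\bigcap_N\{V:\lambda_1(\Omega_V),\dots,\lambda_N(\Omega_V)\text{ are simple}\}$, each factor being open by eigenvalue continuity. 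So the whole claim reduces, exactly as in the proof of Theorem~\ref{thm:generic_spec}, to showing that $0$ is a regular value, i.e.\ ruling out a pair of Dirichlet eigenfunctions $\varphi,\psi$ for a common eigenvalue $\lambda$ with
\[
\int_{\pa\Omega}\frac{\pa\varphi}{\pa\eta}\,\frac{\pa\psi}{\pa\eta}\,(V\cdot\eta)\,d\sigma=0\qquad\text{for every }V\in\mathfrak{V}.
\]

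The point of departure from Theorem~\ref{thm:generic_spec} is that $\mathfrak{V}$ is smaller than the full space of $C^2$ fields, so one must check it still ``sees'' enough of the boundary. Since the only constraints defining $\mathfrak{V}$ live on the lines $\ell_i$, the family $\{(V\cdot\eta)|_{\Cal{C}}:V\in\mathfrak{V}\}$ is unconstrained along the arcs $\Cal{C}$ in the sense that it is dense in $L^2(\Cal{C})$ (this is the one place strict curvature and real-analyticity of $\Cal{C}$ enter; concretely one approximates by polynomial vector fields whose degree-$\geq 2$ parts along the $\ell_i$ are killed). Feeding this density into the displayed identity forces $\pa_\eta\varphi\,\pa_\eta\psi\equiv 0$ on $\Cal{C}$. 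Because $\Cal{C}$ is real-analytic and $-\Delta$ has analytic coefficients, $\varphi$ and $\psi$ extend real-analytically across $\Cal{C}$, so $\pa_\eta\varphi$ and $\pa_\eta\psi$ are real-analytic along each component of $\Cal{C}$; a product of two real-analytic functions on a connected arc vanishes identically only if one factor does, so on some sub-arc $\gamma\subset\Cal{C}$, say $\pa_\eta\varphi\equiv 0$. Together with the Dirichlet condition $\varphi\equiv 0$ on $\gamma$, the eigenfunction $\varphi$ has vanishing Cauchy data on the analytic arc $\gamma$, and Holmgren's uniqueness theorem gives $\varphi\equiv 0$, a contradiction. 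Hence $0$ is a regular value and Theorem~\ref{thm:Sard_Smale} delivers the claim.

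The main obstacle is exactly the tension between wanting large perturbation families (to get transversality) and needing the perturbed domains to stay in $\Cal{F}$; this is what forces the analytic category and makes the density statement in the previous paragraph the crux. Two caveats. First, the display above (and the density argument) presumes $\Cal{C}\neq\emptyset$; the degenerate case $\Cal{C}=\emptyset$, i.e.\ $\Omega$ a polygon, is not covered and must be treated separately, e.g.\ by perturbing the finitely many vertices (straight edges automatically remain straight) and applying the finite-dimensional Sard theorem to the resulting real-analytic family of eigenvalue coincidences, in the spirit of \cite{HJ1,HJ2}. Second, the application of Holmgren uses analyticity of $\gamma$ \emph{twice} --- once to know $\pa_\eta\varphi$ is analytic there, once for the uniqueness theorem itself --- which is why the analytic structure of $\Cal{C}$, rather than mere smoothness, is built into the definition of $\Cal{F}$.
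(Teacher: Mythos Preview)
Your approach is genuinely different from the paper's and (modulo one gap below) works. The paper does \emph{not} re-run Sard--Smale inside $\Cal{F}$. Instead it argues stratum by stratum (fixing $|\Cal{S}|$ and $|\Cal{C}|$): since any two domains in a stratum are joined by a real-analytic family and eigenvalues vary analytically along it, each coincidence locus $\{\lambda_i=\lambda_j\}$ is either the whole stratum or a proper analytic subvariety (hence closed nowhere-dense). To rule out the former it exhibits \emph{one} domain in the stratum with the first $m$ eigenvalues simple, by taking a one-parameter analytic family $\Omega_t$ degenerating to a rectangle with irrational side-ratio squared and invoking spectral convergence. This is considerably shorter and avoids building the restricted Banach manifold and checking transversality there; your route is more in Uhlenbeck's spirit, is self-contained (no external limit domain), and makes transparent exactly where strict curvature and real-analyticity of $\Cal{C}$ are used.

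There is, however, a gap in your transversality step. The displayed identity is an integral over $\pa\Omega=\Cal{S}\cup\Cal{C}$, not over $\Cal{C}$ alone, so density of $\{(V\cdot\eta)|_{\Cal{C}}:V\in\mathfrak{V}\}$ in $L^2(\Cal{C})$ does \emph{not} by itself force $\pa_\eta\varphi\,\pa_\eta\psi\equiv 0$ on $\Cal{C}$: for $V\in\mathfrak{V}$ the quantity $(V\cdot\eta)|_{\Cal{S}}$ is an affine (not zero) function on each straight segment, and its contribution does not drop out. The clean fix is to pass to the smaller family $V=\big(\prod_i L_i\big)\,G$ with $L_i$ a linear form cutting out $\ell_i$ and $G$ an arbitrary analytic field: such $V$ vanish identically on every $\ell_i$ (so trivially lie in $\mathfrak{V}$) and give $(V\cdot\eta)|_{\Cal{S}}=0$, reducing the identity to $\int_{\Cal{C}}$. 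Because each component of $\Cal{C}$ is strictly curved, $\prod_i L_i$ vanishes on $\Cal{C}$ only at finitely many points, and then the multiplication operator by $(\prod_i L_i)|_{\Cal{C}}$ has dense range in $L^2(\Cal{C})$; combined with the standard density of $\{(G\cdot\eta)|_{\Cal{C}}\}$ this yields the conclusion you want. With this repair (and your flagged separate treatment of the polygon case $\Cal{C}=\emptyset$), your argument goes through.
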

\begin{proof}

We would like to show that given any $k$, the subfamily of $\Cal{F}$ whose first $k$ Dirichlet eigenvalues are non-repeated form a residual set. Since the intersection of countably many residual sets is residual, the sub-family of $\Cal{F}$ that has simple Dirichlet spectrum is also residual.  

Since any two members $\Omega_1, \Omega_2$ of $\Cal{F}$ which satisfy $|\Cal{S}_1| = |\Cal{S}_2|$ and $|\Cal{C}_1| = |\Cal{C}_2|$ can be joined by a one-parameter family of real-analytic maps, and the Rayleigh quotient varies analytically under analytic perturbations, one sees immediately that given $i < j \leq m$, it is enough to find one $\Omega$ in this family for which $\lambda_i \neq \lambda_j$. 

Now, take any such member of this family, and find a rectangle $R$ with simple Dirichlet spectrum (it is easy to check that for a rectangle with side lengths $l_1, l_2$ this happens if and only if $\left(\frac{l_1}{l_2}\right)^2 \notin \mathbb{Q}$). Now, start by inscribing $\Omega$ inside $R$ (by scaling if necessary), and consider a one-parameter family of analytic perturbations $\Omega_t$ which converge to $R$. By spectral convergence, it is clear that for any given $m$, there exists a large enough $t$ such that the first $m$ Dirichlet eigenvalues of $\Omega_t$ are non-repeated.

\end{proof}

We end this section with the following question and we believe that an affirmative answer would also help understand the Payne property better for future studies.
\begin{question}
    Given a domain $\Omega \subseteq \RR^n$ or a closed manifold $M$, and let $\mathfrak{M}(\Omega)$ and $\mathfrak{M}(M)$  denote the moduli space of all smooth perturbations of $\Omega$ and all Riemannian metrics on $M$ respectively. Are these moduli spaces path connected?
\end{question}

\subsection{Fundamental gap, narrow convex domains and small perturbations}\label{subsec:gap_proof}

Now we take a look at the problem of minimising the fundamental gap. In this regard, recall the main result from \cite{AC}: for any convex domain $\Omega\subset\RR^n$,
$$
\lambda_2 - \lambda_1 \geq \frac{3\pi^2}{D^2}, 
$$
where $D = \diam \Omega$. Now, the following question is natural: 

\begin{question}
    Is the above inequality saturated by some domain? 
\end{question}
The popular belief in the community seems that it is not, and any infimising sequence for $\lambda_2 - \lambda_1$ (under the normalisation $D = 1$) should degenerate to a line segment. In particular, the correct regime to look for in the search for minimisers is the class of narrow convex domains. This problem seems quite difficult, as standard precompactness ideas (e.g., see recent work in \cite{MTV, KL})  do not apply directly. Also, it is quite resistant to perturbative techniques, as generic perturbations (even small ones) might destroy convexity. In addition, the problem seems quite sensitive to the class of domains: it might demonstrate a markedly different behaviour if the overall class of domains is changed, for example see \cite{LR}. 


Recall that $\mathfrak{P}$ denotes the class of strictly convex $C^2$-planar domains. Now we begin proving Theorem \ref{thm:fund_gap_perturb}. We finish the proof in two steps. First, we prove the following:

\begin{theorem}\label{thm:gap_1}
	Let $\Omega \in \mathfrak{P}$ 
	with  diameter $D = 1$ and inner radius $\rho$ which minimises the fundamental gap functional $\lambda_2 - \lambda_1$ in 
	$\mathfrak{P}$. There exists a universal constant $C \ll 1$ such that if $\rho \leq C$, 
	then $\lambda_2(\Omega)$ is not simple.
\end{theorem}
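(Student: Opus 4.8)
The plan is to argue by contradiction: suppose $\Omega \in \mathfrak{P}$ with $D = 1$ and inner radius $\rho \leq C$ is a gap minimiser with $\lambda_2(\Omega)$ simple. The strategy is to show that under these hypotheses one can produce a competitor domain with strictly smaller gap, using the perturbation formalism of Section \ref{sec:perturbation}. First I would set up the Hadamard-type variational formulas: since $\lambda_2$ is assumed simple, both $\lambda_1$ and $\lambda_2$ vary $C^1$-smoothly (indeed real-analytically) under a $C^2$ one-parameter family of domain deformations $\Omega_t = \{x + tV(x)\}$, with
$$
\dot\lambda_j = -\int_{\pa\Omega} \left(\frac{\pa\varphi_j}{\pa\eta}\right)^2 \langle V, \eta\rangle \, d\sigma,
$$
as extracted from the computation around (\ref{eq:var_eps_op}) (the boundary term is exactly what appears in the transversality argument for Theorem \ref{thm:generic_spec}). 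Thus
$$
\frac{d}{dt}\Big|_{t=0}(\lambda_2 - \lambda_1) = -\int_{\pa\Omega}\left[\left(\frac{\pa\varphi_2}{\pa\eta}\right)^2 - \left(\frac{\pa\varphi_1}{\pa\eta}\right)^2\right]\langle V, \eta\rangle\, d\sigma.
$$
A genuine minimiser in $\mathfrak{P}$ must have this vanish for all admissible $V$ that preserve convexity and the diameter normalisation — but for a very thin strictly convex domain one should be able to choose $V$ (pushing outward where $|\pa_\eta\varphi_2| > |\pa_\eta\varphi_1|$ and inward elsewhere, localised away from the diameter-realising points so $D$ is unaffected to first order, and small enough in $C^2$ to keep strict convexity) making the derivative strictly negative, a contradiction — unless $(\pa_\eta\varphi_2)^2 \equiv (\pa_\eta\varphi_1)^2$ on $\pa\Omega$.

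So the heart of the matter becomes: rule out $(\pa_\eta\varphi_2)^2 = (\pa_\eta\varphi_1)^2$ on $\pa\Omega$ for thin domains. Here I would bring in the geometry of the second nodal set together with the smallness of $\rho$. Since $D = 1$ and $\rho \leq C$, the domain is long and thin, so by Jerison's theorem \cite{J} (quoted in the introduction) the nodal set $\NNN(\varphi_2)$ crosses $\pa\Omega$ transversally at exactly two points $p_1, p_2$, and $\Omega$ satisfies the strong Payne property; in particular $\pa_\eta\varphi_2$ changes sign and vanishes precisely at $p_1, p_2$. On the other hand $\pa_\eta\varphi_1$ has a strict sign on all of $\pa\Omega$ (Hopf lemma), so it can match $\pm\pa_\eta\varphi_2$ in absolute value only if $\pa_\eta\varphi_2$ also never vanishes — contradicting the two transversal crossings. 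Hence the identity $(\pa_\eta\varphi_2)^2 = (\pa_\eta\varphi_1)^2$ is impossible once $\rho$ is below the Jerison threshold, and this is exactly the universal constant $C$ in the statement.

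The main obstacle I anticipate is the admissibility of the perturbation: one must choose $V$ so that (i) convexity is preserved (strict convexity gives room, but one needs the $C^2$-size of $V$ controlled in terms of the minimal curvature of $\pa\Omega$), (ii) the diameter constraint $D = 1$ is respected — handled by supporting $V$ away from the pair of points realising the diameter, or by a Lagrange-multiplier / normalisation argument rescaling at the end, and (iii) the sign pattern of $(\pa_\eta\varphi_2)^2 - (\pa_\eta\varphi_1)^2$ is genuinely non-trivial on an open subset of $\pa\Omega$, which is guaranteed by the previous paragraph. A secondary technical point is justifying the Hadamard formula at the level of $C^2$ domains and $C^2$ vector fields (rather than smooth), but this is standard — the eigenfunctions are smooth in the interior and $H^2$ up to the $C^2$ boundary, so the boundary integrals make sense — and it follows from the same pull-back computation ((\ref{eq:pull_back_eigen})–(\ref{eq:var_eps_op})) already carried out for Theorem \ref{thm:generic_spec}. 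Once the contradiction is reached, Theorem \ref{thm:fund_gap_perturb} will follow by combining this with a separate argument handling the case $\lambda_2(\Omega)$ non-simple, which is presumably the ``second step'' alluded to after the statement.
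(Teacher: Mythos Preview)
Your strategy coincides with the paper's: assume $\lambda_2$ simple, use the Hadamard formula so that criticality of the gap forces $(\pa_\eta\varphi_2)^2 = (\pa_\eta\varphi_1)^2$ along the boundary, then pit the Hopf lemma (no zeros of $\pa_\eta\varphi_1$) against the strong Payne property (two zeros of $\pa_\eta\varphi_2$).

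The one place your write-up drifts from the paper is the role you assign to Jerison and hence to the thinness constant $C$. You invoke \cite{J} to obtain the strong Payne property for thin domains, but Payne already holds for \emph{every} planar convex domain by Melas--Alessandrini, so on your reading the hypothesis $\rho\le C$ does no work. In the paper, thinness enters for a different reason: because $V$ must vanish at the diameter-realising pair $x_1,x_2$ to keep $D=1$, the identity $|\pa_\eta\varphi_1|=|\pa_\eta\varphi_2|$ is only directly derived on $\pa\Omega\setminus\{x_1,x_2\}$, and Jerison's result on the \emph{location} of the nodal line in thin convex domains (\cite{J,J1,J2}) is what guarantees the nodal endpoints $p_1,p_2$ lie away from $x_1,x_2$, so that the contradiction $0=|\pa_\eta\varphi_2(p_i)|=|\pa_\eta\varphi_1(p_i)|>0$ actually lands. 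Your version asserts the identity on all of $\pa\Omega$ after having explicitly localised $V$ away from the diameter points; that is a small gap --- arguably closable by continuity of the normal derivatives, in which case you would in fact be proving something sharper than stated --- but as written it does not account for how the hypothesis $\rho\le C$ is used.
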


\begin{proof}
Recall the Hadamard formula (see Section 2.5.2 of \cite{He}) which expresses the evolution of Laplace spectrum with respect to perturbation of a domain $\Omega$ by a vector field $V$:
\beq\label{eq:eigen_evol}
\lambda'_k(0) = - \int_{\pa \Omega} \left( \frac{\pa \varphi}{\pa \eta}\right)^2 V\cdot\eta \; dS.
\eeq
Suppose $\lambda_i, \lambda_j$ are Dirichlet eigenvalues of $\Omega$ with corresponding eigenfunctions $\varphi_i, \varphi_j$ respectively. If $\lambda_j - \lambda_i$ considered as a function of domains has a critical point at a domain $\Omega$, then we must have that 
\beq
0 = (\lambda_j - \lambda_i)'(0) = -\int_{\pa \Omega} \left(\left( \frac{\pa \varphi_j}{\pa \eta}\right)^2 -  \left( \frac{\pa \varphi_i}{\pa \eta}\right)^2 \right) V\cdot\eta \; dS,
\eeq
for all perturbation vector fields $V$. Now we specify to the special case $j = 2, i = 1$, and the above calculation with the Hadamard formula holds  true under the assumption that $\lambda_2$ is simple. 

Now, consider a small perturbation vector field $V$ such that $V = 0$ at $x_1, x_2$ (see diagram below) and $V\cdot\eta$ is non sign-changing away from $x_1,x_2$. Note that the aforementioned $V$ constrains the diameter to be fixed along the perturbation. Then, this implies that 
$$
\left| \frac{\pa \varphi_1}{\pa \eta}\right| = \left| \frac{\pa \varphi_2}{\pa \eta}\right| \text{   on   } \pa \Omega \text{ away from } \{x_1, x_2\}.
$$
Using the maximum principles 
and the Hopf Lemma, we find that without loss
of generality, $\frac{\pa \varphi_1}{\pa \eta}>0$ on $\pa \Omega$. 
Moreover, since 
convex domains or their small perturbations satisfy the strong Payne property (as proved in \cite{M, A, MS}), using Lemma 1.2 of \cite{Lin} there exist exactly two points $p_1, p_2\in \pa \Omega$ such that  
$$\varphi_2(p_1)=\varphi_2(p_2)=0, \quad \text{and } \quad\frac{\pa \varphi_2}{\pa \eta}(p_1)= \frac{\pa \varphi_2}{\pa \eta}(p_2)=0.$$

 \begin{figure}[ht]
\centering
\includegraphics[scale=0.17]{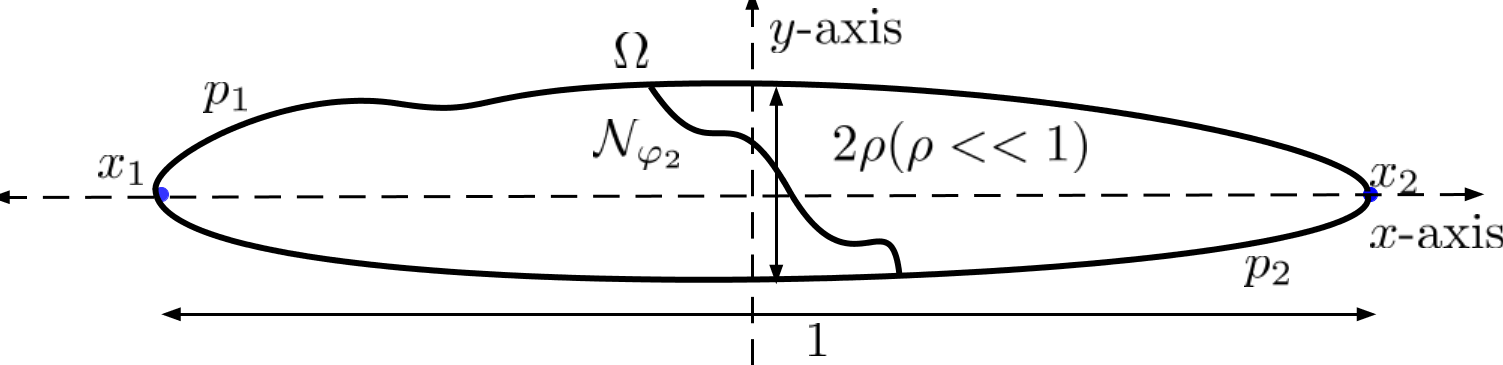} 
\caption{Small perturbation of a narrow convex domain}
\end{figure}
     

From work in \cite{J, J1, J2} (and perturbation arguments based on \cite{MS}), it is known that 
$p_1, p_2$ cannot be near $x_1, x_2$ once the domain $\Omega$ is long and narrow enough (which is encoded in the statement by the universal constant $C$). This is a contradiction since
$$0=\left|\frac{\pa \varphi_2}{\pa \eta}(p_i)\right|=\left|\frac{\pa \varphi_1}{\pa \eta}(p_i)\right|>0.$$

\end{proof}

\begin{que}
    The following interesting question comes up in connection to the proof of the last theorem. On a domain $\Omega$, can there be two Dirichlet eigenfunctions (corresponding to different eigenvalues) such that they also have the same Neumann data? One is tempted to speculate that such an event should not happen unless $\Omega$ is a ball. As pointed out by Antoine Henrot, the first and sixth eigenfunctions on the planar disc are both radially symmetric, so they can be scaled to have the same Neumann data.  This is in turn related to a conjecture due to Schiffer. 
\end{que} 


We augment the above result by the following observation. 
\begin{theorem}\label{thm:matrix_sig}
Let $\Omega \subset \RR^n$ be a $C^2$-domain. Assume that $\Omega$ has a multiple eigenvalue of the Dirichlet Laplacian 
$$
\lambda_{k + 1}(\Omega) = \lambda_{k + 2}(\Omega) = \cdots = \lambda_{k + m}(\Omega).
$$
Then for each fixed $1 \leq l \leq m$ there exists a deformation field $\Omega_t$ passing through $\Omega_0 := \Omega$ generated by a $C^2$-vector field $V$ such that for small enough $t$, 
$$
\lambda_{k + 1}(\Omega_t) < \lambda_{k + 1}(\Omega_0),\cdots,  \lambda_{k + l}(\Omega_t) < \lambda_{k + l}(\Omega_0),$$
and 
$$
\lambda_{k + l + 1}(\Omega_t) > \lambda_{k + l + 1}(\Omega_0),\cdots,  \lambda_{k + m}(\Omega_t) > \lambda_{k + m}(\Omega_0).$$
Furthermore, we can ensure that
$$ 
|\Omega_0| = |\Omega_t|.
$$
Suppose $\Omega_0\in \mathfrak{P}$ is a long narrow domain as in Theorem \ref{thm:gap_1} above. Then, we can additionally ensure that $\Omega_t\in \mathfrak{P}$ and
$$
\diam(\Omega_0) = \diam(\Omega_t).
$$
\end{theorem}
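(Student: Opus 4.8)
\noindent The plan is to use the classical Hadamard variation formula together with the perturbation theory of multiple eigenvalues, which reduces the problem to a finite-dimensional linear-algebraic statement about symmetric bilinear forms. Fix the multiplicity-$m$ eigenvalue $\mu := \lambda_{k+1}(\Omega) = \cdots = \lambda_{k+m}(\Omega)$, and let $E$ be the $m$-dimensional eigenspace, with $L^2$-orthonormal basis $\varphi_{k+1}, \dots, \varphi_{k+m}$. For a $C^2$ vector field $V$ generating a deformation $\Omega_t$, the standard first-variation theory (see Section 2.5 of \cite{He}, and the pull-back formalism already set up in Subsection \ref{subsec:prelim}) says that the $m$ branches $\lambda_{k+1}(\Omega_t), \dots, \lambda_{k+m}(\Omega_t)$ emanating from $\mu$ have one-sided derivatives at $t=0$ given by the eigenvalues of the symmetric $m\times m$ matrix
\beq\label{eq:Hadamard_matrix}
M_V = \left( - \int_{\pa\Omega} \frac{\pa \varphi_{k+a}}{\pa\eta} \frac{\pa\varphi_{k+b}}{\pa\eta}\, (V\cdot\eta)\, dS \right)_{1 \le a, b \le m}.
\eeq
Thus it suffices to produce a $C^2$ vector field $V$, supported near $\pa\Omega$ and satisfying the volume constraint $\int_{\pa\Omega} V\cdot\eta\, dS = 0$ (so that $|\Omega_t| = |\Omega_0|$ to first order, which we then correct to exactly by a routine implicit-function-theorem argument), such that $M_V$ has exactly $l$ negative eigenvalues and $m-l$ positive eigenvalues.

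\noindent The key step is the following surjectivity claim: as $V$ ranges over all admissible vector fields, the map $V \mapsto M_V$ has image all of the space $\mathrm{Sym}(m)$ of symmetric $m\times m$ matrices (or at least a subset meeting the open set of matrices with signature $(l, m-l)$). This is exactly where the unique continuation principle enters, in the same spirit as the proof of Theorem \ref{thm:generic_spec}: if the image were contained in a proper subspace, there would be a nonzero symmetric matrix $(c_{ab})$ orthogonal to every $M_V$, i.e.
\beq
\int_{\pa\Omega} \Big( \sum_{a,b} c_{ab}\, \frac{\pa\varphi_{k+a}}{\pa\eta} \frac{\pa\varphi_{k+b}}{\pa\eta} \Big) (V\cdot\eta)\, dS = 0
\eeq
for all $V$, forcing the boundary function $\sum_{a,b} c_{ab}\, \pa_\eta\varphi_{k+a}\, \pa_\eta\varphi_{k+b}$ to vanish identically on $\pa\Omega$ (after accounting for the single linear constraint coming from the volume restriction, which only removes the span of a single rank consideration and is easily absorbed). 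Diagonalizing $(c_{ab})$ and writing $\psi = \sum_a d_a \varphi_{k+a}$ for a suitable combination, one gets that $\sum_j \epsilon_j (\pa_\eta\psi_j)^2 \equiv 0$ on $\pa\Omega$; a Bers-type local analysis at a boundary point where one of the $\psi_j$ has nonvanishing Neumann data (such a point exists by unique continuation, since an eigenfunction with zero Cauchy data on an open piece of $\pa\Omega$ vanishes identically) yields a contradiction. This gives the surjectivity, hence one can pick $V$ realizing any prescribed signature $(l, m-l)$, and in particular choose the eigenvalues of $M_V$ so that the first $l$ branches strictly decrease and the remaining $m-l$ strictly increase.

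\noindent It remains to handle the two constraints in the last part of the statement: the volume normalization $|\Omega_0| = |\Omega_t|$ and, in the planar strictly-convex case, the constraints that $\Omega_t \in \mathfrak{P}$ and $\diam(\Omega_t) = \diam(\Omega_0)$. For the volume, one works from the start inside the codimension-one ``submanifold'' of deformations with $\int_{\pa\Omega} V\cdot\eta\, dS = 0$; since this is a single linear constraint, the surjectivity argument above still goes through onto a codimension-at-most-one slice of $\mathrm{Sym}(m)$, which still meets every signature stratum (as each such stratum is open), and then a standard reparametrization $t \mapsto t + O(t^2)$ enforces the exact volume equality. For the convex case, we additionally localize the support of $V$ away from the two diameter-realizing points $x_1, x_2$ and choose $V\cdot\eta$ with small $C^2$ norm; since strict convexity is an open condition in $C^2$ and $\Omega_0$ is strictly convex, $\Omega_t \in \mathfrak{P}$ for small $t$, and vanishing of $V$ near $x_1, x_2$ keeps the diameter pinned — this is exactly the vector-field construction already used in the proof of Theorem \ref{thm:gap_1}. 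The main obstacle, and the technical heart of the argument, is the unique-continuation/Bers step establishing that no nontrivial symmetric matrix annihilates all the $M_V$: one must be careful that the single volume constraint does not accidentally coincide with the hypothetical annihilator, but since the annihilator would force a pointwise algebraic identity among boundary Neumann data of genuinely independent eigenfunctions, a local expansion at a generic boundary point rules it out.
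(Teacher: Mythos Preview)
Your strategy coincides with the paper's at the structural level: both compute the directional derivatives of the $m$ branches as eigenvalues of the Hadamard matrix $M_V=\bigl(-\int_{\partial\Omega}\partial_\eta u_a\,\partial_\eta u_b\,(V\cdot\eta)\,d\sigma\bigr)$ and then invoke unique continuation to arrange the signature. The packaging differs, though. Rather than prove abstract surjectivity of $V\mapsto M_V$, the paper (following \cite{HO}) builds $V$ explicitly: it picks $m$ boundary points $A_1,\ldots,A_m$ at which the Neumann-data vectors $v_{A_k}=(\partial_\eta u_1(A_k),\ldots,\partial_\eta u_m(A_k))^T$ are linearly independent, places signed bumps $V\cdot\eta=\pm 1$ near these points, and lets the bump width $\varepsilon\to 0$. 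The rescaled matrix then converges to $\sum_k\sigma_k\,v_{A_k}v_{A_k}^T$, which factors as $VDV^T$ with $V$ invertible and $D=\mathrm{diag}(\sigma_1,\ldots,\sigma_m)$, so Sylvester's law reads off the signature $(l,m-l)$ directly. The linear-independence step is exactly Holmgren: if no such $m$ points existed, some nontrivial combination $\sum c_p u_p$ would have vanishing Neumann data on an open boundary arc, hence vanish identically.

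This explicit construction is a bit more elementary than your route because it bypasses the step you leave vague, namely that $\sum_{a,b}c_{ab}\,\partial_\eta\varphi_a\,\partial_\eta\varphi_b\equiv 0$ on $\partial\Omega$ forces $c=0$. For \emph{definite} $c$ this follows instantly from Holmgren, but for \emph{indefinite} $c$ your ``Bers-type local analysis'' is not a one-liner: already for $m=2$ one must argue that $(\partial_\eta\psi_1)^2=(\partial_\eta\psi_2)^2$ gives $\partial_\eta\psi_1=\pm\partial_\eta\psi_2$ with a locally constant sign on a boundary arc, whence $\psi_1\mp\psi_2$ has zero Cauchy data there and Holmgren finishes; for $m\ge 3$ the quadric $\sum\epsilon_j v_j^2=0$ is not a union of hyperplanes and a genuine extra argument is needed. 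The paper's point-selection trick sidesteps this entirely. Your treatment of the volume and diameter constraints (localising $V$ away from the diameter endpoints $x_1,x_2$, balancing $\int V\cdot\eta$) matches the paper's.
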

The proof is based on some ideas in Lemma 1 of  \cite{HO}. \newline
\begin{proof} 
We begin by observing that because of the existence of multiple eigenvalues, 
$\lambda_{k + p}(\Omega_t)$ is not differentiable at $t = 0$ in the usual Frechet sense, but there is a nice formula giving directional derivatives, in the sense of the limit $\displaystyle{\frac{\lambda_{k + p}(\Omega_t) - \lambda_{k + p}(\Omega_t)}{t} }$ as $t \to 0$. Such directional derivatives are precisely the eigenvalues of the $m \times m$-matrix
\beq\label{eq:direction_der}
\Cal{M} = \left( -\int_{\pa \Omega} \frac{\pa u_i}{\pa \eta} \frac{\pa u_j}{\pa \eta} V\cdot\eta \; d\sigma \right), \quad p = 1, 2,\cdots, m,
\eeq
where $u_i, 1 \leq i \leq m$ denotes the eigenspace for the repeated eigenvalue $\lambda_{k + 1}$. \newline
Let us consider points $A_1, A_2,\cdots, A_m \in \pa\Omega$ the choice of which will be explained below. Also, consider a deformation vector field $V$ such that 
$V\cdot\eta = 1$ in a $\varepsilon$-neighbourhood of $A_1, A_2,\cdots, A_l$ and 
$V\cdot\eta = -1$ in a $\varepsilon$-neighbourhood of $A_{l + 1},\cdots, A_m$, $V.\eta=0$ outside a $2\epsilon$-neighbourhood of $A_1, \cdots, A_m$, and regularized in a $2\varepsilon$-neighbourhood around each such point maintaining $|\Omega_0| = |\Omega_t|$.  To preserve the diameter also, one just needs to choose the points $A_j$ sufficiently away from $x_1, x_2$ (see the figure above).

By (\ref{eq:direction_der}) above, it suffices to prove that the symmetric matrix $\Cal{M}$ has signature $(l, m - l)$. When $\varepsilon \to 0$, $\Cal{M}$ converges to the matrix
$$
M = \left( - \sum_{k = 1}^l \frac{\pa u_i}{\pa \eta}(A_k)\frac{\pa u_j}{\pa \eta}(A_k) + \sum_{k = l + 1}^m  \frac{\pa u_i}{\pa \eta}(A_k)\frac{\pa u_j}{\pa \eta}(A_k)\right).
$$
Consider the column vectors $v_{A_k} := (\frac{\pa u_1}{\pa \eta}(A_k), \cdots, \frac{\pa u_m}{\pa \eta}(A_k))^T$. Note that $M=V\cdot W$, where 
$$V= (v_{A_1}, \cdots, v_{A_m})\quad \text{ and,} \quad W=(-v_{A_1, }, \cdots, -v_{A_l}, v_{A_{l+1}}, \cdots, v_{A_m} )^T.$$
It is enough to ensure that the vectors $\{v_{A_k}:k=1,\cdots,m \}$ are linearly independent. Then the signature of $M$ is $(l, m-l)$. 

If the columns in the matrix $V$ are not independent, then they satisfy a homogeneous linear equation, which means in turn that there is a homogeneous linear relation among the rows of $V$, namely that, 
on an open set $S \subset \pa \Omega$ away from $x_1, x_2$, we have that 
$$
\sum_{p = 1}^m c_p  \frac{\pa u_p}{\pa \eta} =   \frac{\pa}{\pa \eta} \left(\sum_{p = 1}^m c_p u_p\right) = 0 \quad \text{on } S.
$$
This is a contradiction from H\"{o}lmgren's uniqueness theorem.

\end{proof}
The following question seems interesting:
\begin{question}
    Could we also ensure that $\lambda_{k + l}(\Omega_t) = \lambda_{k + l}(\Omega_0) < \lambda_{k + l +1}(\Omega_t)$ at the expense of changing the volume of $\Omega_t$?
\end{question}

\begin{proof}[Proof of Theorem \ref{thm:fund_gap_perturb}]
Now, if we observe the way that the vector field was chosen in the proof of Theorem \ref{thm:matrix_sig}, it is clear that if one wants to fix only the diameter, one can choose such a $V$ easily such that $\lambda_1(\Omega_t) > \lambda_1(\Omega_0)$ for small enough $t$. This reduces the gap even further, contradicting that $\Omega_0$ is a minimiser. Finally, putting Theorems \ref{thm:gap_1} and \ref{thm:matrix_sig} together, we conclude the proof. 
\end{proof}

\begin{remark}
Theorem \ref{thm:matrix_sig} is not essential for the proof of Theorem \ref{thm:fund_gap_perturb}, but it might be of independent interest to few. The topological restrictions on the first nodal set imposed by Jerison in \cite{J} is enough to prove Theorem \ref{thm:fund_gap_perturb}, and the idea of the proof is as follows: from Theorem \ref{thm:gap_1}, we know that the second eigenvalue of any minimising domain $\Omega$ cannot be simple. Let $\varphi_1, \varphi_2$ be any two linearly independent second eigenfunctions of $\Omega$. From choosing any point $p\in \Omega$ sufficiently close to the ``ends'' $x_1$ or $x_2$, there exists a second eigenfunction $\varphi=c_1\varphi_1+c_2\varphi_2$ (for some $c_1, c_2\in \RR$) such that $\varphi(p)=0$. This leads to  contradiction, since from \cite{J} we know that the first nodal set stays away from the ends $x_1, x_2$. This in turn implies that $\Omega$ cannot be a minimiser, which concludes the proof of Theorem \ref{thm:fund_gap_perturb}. 
\end{remark}

\section{Appendix: eigenvalue multiplicity, nodal intersection/detachment and topology}
In this appendix we make some general remarks about the multiplicity of eigenvalues. An important component in the proof of \cite{M} was a result from \cite{Lin} which states that if the Neumann data of a second Dirichlet eigenfunction is non sign-changing on the boundary of a convex planar domain $\Omega$, then the second eigenvalue of $\Omega$ is simple. So, studying the multiplicity of eigenvalues might prove important for future work on the Payne conjecture. In this regard, we begin with the following:
\begin{proposition}
If $\Omega \subseteq \RR^n$ is a simply connected 
domain, 
and $\varphi$ and $\psi$ are two Dirichlet eigenfunctions corresponding to the same eigenvalue, then every connected component of $\NNN_\psi$ intersects $\NNN_\varphi$ at at least one point.
\end{proposition}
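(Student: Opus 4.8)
The plan is to use a Green's identity argument on nodal domains, exactly analogous to the proof sketched earlier for the case of eigenfunctions with the same eigenvalue on a simply connected domain, but now applied to a nodal domain of $\psi$ rather than of $\varphi$. Let $\mu$ denote the common eigenvalue, so $-\Delta \varphi = \mu\varphi$ and $-\Delta\psi = \mu\psi$. Suppose, for contradiction, that some connected component $N$ of $\NNN_\psi$ does not meet $\NNN_\varphi$ at all. The key observation is that $N$ must then be entirely contained in the closure of a single nodal domain $\Omega_\psi$ of $\psi$, and in fact (since $\NNN_\psi$ is the full zero set, whose components bound the nodal domains) $N$ is one of the boundary pieces of $\Omega_\psi$.

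First I would set up the integration. On the nodal domain $\Omega_\psi$ one has
\begin{align*}
    0 = \int_{\Omega_\psi} \left( \varphi\,\Delta\psi - \psi\,\Delta\varphi\right)\, dx = \int_{\pa \Omega_\psi}\left( \varphi\,\frac{\pa\psi}{\pa\eta} - \psi\,\frac{\pa\varphi}{\pa\eta}\right)\, dS,
\end{align*}
where the left side vanishes because both Laplacians equal $-\mu$ times the respective function. On $\pa\Omega_\psi$ we have $\psi = 0$, so the second term drops out, leaving $\int_{\pa\Omega_\psi} \varphi\,\frac{\pa\psi}{\pa\eta}\, dS = 0$. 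Since $\psi$ does not change sign inside $\Omega_\psi$, its normal derivative $\frac{\pa\psi}{\pa\eta}$ has a fixed sign along $\pa\Omega_\psi$ (by the Hopf boundary point lemma, away from the critical points of $\psi$ on the nodal set, which form a lower-dimensional set). Therefore $\varphi$ must change sign on $\pa\Omega_\psi$, i.e. $\varphi$ vanishes somewhere on $\pa\Omega_\psi$.

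The remaining point — and the one requiring the simple-connectivity hypothesis — is to localize this sign change to the specified component $N$. Here the argument is that on a simply connected planar-type domain (or more generally when each nodal domain $\Omega_\psi$ of $\psi$ is bounded by a single connected piece of $\NNN_\psi \cup \pa\Omega$), the boundary $\pa\Omega_\psi$ decomposes into portions lying on $\NNN_\psi$ and portions lying on $\pa\Omega$; on the latter, $\varphi$ is zero identically (Dirichlet condition), so those portions contribute nothing informative, and one should instead run the computation on a nodal domain whose boundary consists \emph{only} of the component $N$ together with a piece of $\pa\Omega$ — which is precisely what simple connectivity of $\Omega$ guarantees for the component $\pa\Omega_\psi$ adjacent to $N$. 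Then the forced sign change of $\varphi$ must occur on $N$ itself, giving a point of $N \cap \NNN_\varphi$, the desired contradiction.

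The main obstacle I anticipate is the careful topological bookkeeping of which boundary pieces of which nodal domain one integrates over — one must choose $\Omega_\psi$ so that $N \subseteq \pa\Omega_\psi$ and so that the Dirichlet portions of $\pa\Omega_\psi$ do not ``absorb'' the sign change, and this is exactly where simple connectivity of $\Omega$ is used (as in the proof of the earlier proposition, where it was noted that $\pa\Omega_\varphi$ has to be connected). A secondary technical point is justifying that $\frac{\pa\psi}{\pa\eta}$ has constant sign on the relevant boundary arc despite possible critical points of $\psi$ on $\NNN_\psi$; this is handled by the Hopf lemma together with the fact that such critical points are non-generic/removable in the integration, or alternatively by approximating $\Omega_\psi$ from inside by slightly smaller domains.
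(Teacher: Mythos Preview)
Your approach is essentially the same as the paper's: a Green's identity computation on a nodal domain, combined with the Hopf lemma for the sign of the normal derivative and the fact that nodal-domain boundaries are connected in a simply connected domain. The only difference is that the paper integrates over a nodal domain $\Omega_\varphi$ of $\varphi$ and concludes that $\psi$ changes sign on $\pa\Omega_\varphi$, whereas you swap the roles and integrate over a nodal domain $\Omega_\psi$ of $\psi$ bordering the given component $N$. Your version is in fact more directly tailored to the proposition as stated (about components of $\NNN_\psi$), and your explicit discussion of why the forced zero of $\varphi$ must land on $N$ rather than being ``absorbed'' by the Dirichlet boundary is more careful than the paper's terse ``hence the result''; but the mechanism is identical, and the paper's argument, being symmetric in $\varphi$ and $\psi$, yields yours immediately upon interchanging the two.
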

\begin{proof}
Let $\Omega_\varphi$ be a nodal domain for $\varphi$. Then, 
\begin{align*}
    \int_{\Omega_\varphi} \Delta \varphi\; \psi - \varphi\;\Delta \psi & = \int_{\pa\Omega_\varphi} \frac{\pa\varphi}{\pa \eta}\psi - \frac{\pa\psi}{\pa \eta}\varphi \\
    & = \int_{\pa\Omega_\varphi} \frac{\pa\varphi}{\pa \eta}\psi = 0,
\end{align*}
which implies the sign change of $\psi$ on $\pa \Omega_\varphi$, and hence the result. Observe that we have used the fact that on a simply connected domain, $\pa \Omega_\varphi$ has to be connected.
\end{proof}

Observe that the above is not true when the domain has more complicated topology. We refer the reader to \cite{Do} where, following earlier work in \cite{Gi}, it is proved that on a closed Riemannian manifold $M$, if there exist two eigenfunctions for the same eigenvalue which do not simultaneously vanish, then $H_1(M) \neq 0$. An illustrative example could be that of a torus (which is a surface of revolution in $\RR^3$), where the nodal set can be ``pushed'' or translated using the rotational isometry. However, it is an interesting question whether such a result could be true on Euclidean domains. The argument in Donnelly is too pretty not to mention, and we add to it our  observation that his argument would also extend to Euclidean domains with Neumann boundary conditions. 

\begin{proposition}\label{prop:Neumann_Don_Gichev}
    Let $\Omega \subset \RR^n$ be a bounded domain with Neumann boundary conditions. Suppose as before that $\varphi$ and $\psi$ are two eigenfunctions corresponding to the same eigenvalue $\mu$. If their nodal sets do not intersect, then $H_1(\Omega) \neq 0$.
\end{proposition} 

\begin{proof}
The proof is essentially a minor observation on the proof in \cite{Do}. Let $u := \varphi + i\psi$, and $X := \Im \frac{\nabla u}{u}$. By our assumption, the denominator is never zero. On calculation, it can be checked that the flow of the vector field $Y := \frac{X}{\sqrt{1 + |X|^2}}$ leaves invariant the finite measure $\frac{|\psi|^2}{\sqrt{1 + |X|^2}} \; dV$. On a closed manifold, this is a complete flow. On a manifold with boundary to use the same idea, we need the Neumann boundary condition, as then the gradient of the eigenfunctions is tangential to the boundary. Now, by Poincar\'{e} recurrence, there is an integral curve $\alpha$ of the vector field $Y$ which returns arbitrarily close to its starting point. It can be completed to a closed path $\alpha$ such that $\int_\alpha \Im \frac{du}{u} \gtrsim 1$, which means that both $ \varphi(\alpha)$ and $\psi (\alpha)$ are not zero homologous in $\CC^*$.  
\end{proof}
This raises the natural question for the corresponding case of Dirichlet boundary conditions. It is again a quick observation that if $M$ is a compact manifold with totally geodesic boundary $\pa M$, then the double $\tilde{M}$ of $M$ can be constructed with $C^2$ Riemannian metric (see \cite{Mo}) where the Dirichlet eigenfunctions can be continued by ``reflection about the boundary''. Clearly in this case, obvious modifications of the argument from \cite{Do} will still work. However, it is not clear how to proceed in the case of a Euclidean domain, where the corresponding statement still seems intuitively true. 

So, for the case of Dirichlet boundary conditions, instead of taking the approach of \cite{Do}, it seems fruitful to revert back to the original approach of Gichev. A careful scan of the ideas in  
\cite{Gi} reveals the following:
\begin{proposition}\label{prop:Dirichlet_Don_Gichev}
    Let $\Omega \subset \RR^n$ be a bounded domain and consider the Laplacian on $\Omega$ with the Dirichlet boundary condition. If $\varphi$ and $\psi$ are two eigenfunctions corresponding to the same eigenvalue $\lambda$ whose nodal sets do not intersect, then $H^1(\Omega) \neq 0$.
\end{proposition}
The proof is verbatim similar to the one in \cite{Gi}, with obvious modifications to allow for the boundary, and we skip it. 

\begin{remark}
    In light of the assumptions imposed on all the results of this section, it is natural to wonder when the Laplace-Beltrami operator of a closed manifold has repeated spectrum. Firstly, it is a well-known fact that {\em generic} spaces have simple spectrum. This is a well-known transversality phenomenon investigated in \cite{Al, U} (we also give our own proof in Section \ref{sec:perturbation} below). On the other hand, it is a well-known heuristic (by now folklore) that the presence of symmetries of the space $M$ leads to repetitions in the spectrum. An explicit proof of this heuristic using a variant of the Peter-Weyl argument has been recorded in \cite{Ta}. The main claim is that the presence of a non-commutative group $G$ of isometries of the space will lead to infinitely many repeated eigenvalues of the Laplace-Beltrami operator.
\end{remark}

\begin{remark}\label{rem:one_pt_intersection}
It is clear that on a simply connected domain, satisfaction of the strong Payne property implies that the multiplicity of the second Dirichlet eigenvalue is at most two. Suppose there are three eigenfunctions $\varphi_j, j = 1, 2, 3$ corresponding to $\lambda_2$. Then picking any two points $p, q \in \pa\Omega$, one can find an eigenfunction $\psi_{pq} = \sum_j \alpha_j \varphi_j$ such that $\psi$ intersects $\pa\Omega$ exactly at $p, q$. Now, consider a sequence $p_n, q_n$ approaching a common point $o \in \pa\Omega$. Then in the limit one gets an eigenfunction $\psi$ which intersects $\pa\Omega$ at exactly $o$, and vanishes to order at least $2$ there, which would be a contradiction of the strong Payne property.
\end{remark}

With that in place, we look at the following multiplicity result. 

\begin{theorem}\label{thm:mutliplicity_second_eigen}
    Consider a bounded simply connected domain $\Omega \subseteq \RR^2$ 
   and let $(0,0)\notin \Omega$  satisfying the following:
    \begin{itemize}
       \item the boundary $\pa \Omega$ contains exactly two distinct points $P, Q$ dividing $\pa \Omega$ into two components $\Gamma_j, j = 1, 2$ such that the outward unit normal at $P$ (respectively, $Q$) is in the direction of the vector joining $(0,0)$ to $P$ (respectively, vector joining $Q$ to $(0,0)$).
       
        \item at every point $(x, y) \in \Gamma_1$, the outward normal $\eta$ makes an acute angle with $(-y, x)$ and at every point $(x, y) \in \Gamma_2$, the outward normal $\eta$ makes an obtuse angle with $(-y, x)$.
    \end{itemize}
    In such domains, the multiplicity of the second Dirichlet eigenvalue is at most $2$.
\end{theorem}
\begin{remark}
Observe that the above theorem includes in particular domains which are convex in one direction, when the origin is taken arbitrarily far from the domain (point at infinity).
\end{remark}

\begin{proof}
Let $\Omega\subset\RR^2$ be a bounded simply connected domain. Let $(0,0)\notin \Omega$ be a point such that 
the boundary $\pa \Omega$ contains exactly two distinct points $P, Q$ dividing $\pa \Omega$ into two components $\Gamma_j, j = 1, 2$ in a way that the unit outward normal  at $P$ is in the direction to the vector joining $(0,0)$ to $P$  and the unit outward normal (blue arrows in Figure \ref{fig:domain with mult. atmost 2}) at $Q$ is opposite to the vector joining $(0,0)$ to $Q$  (green arrows in Figure \ref{fig:domain with mult. atmost 2}). In particular, for the points $P=(P_1,P_2)$ and $Q=(Q_1, Q_2)$ we have 
$$
\left\langle \eta_{P}, \frac{(P_1,P_2)}{|P|} \right\rangle= 1, \quad \text{and} \quad \left\langle \eta_{Q}, \frac{(Q_1,Q_2)}{|Q|} \right\rangle = -1.
$$
Considering the rotational vector field 
$$X=-y\frac{\pa}{\pa x}+ x\frac{\pa }{\pa y},$$
from our second assumption we have that,  $\langle\eta_{(x,y)}, X_{(x,y)} \rangle  > 0  \text{ (respectively, } < 0)$ when  $(x,y)\in \Gamma_1$ (respectively, $\Gamma_2$), and  $\langle\eta_{P}, X_{P} \rangle  = \langle\eta_{Q}, X_{Q} \rangle =0. $



Suppose to the contrary, that the multiplicity of $\lambda_2$ is at least $3$. Up to forming linear combinations, let $\psi$ be a second eigenfunction whose nodal set intersects $\pa\Omega$ at $P, Q$. Also, given $O\in \pa \Omega$, one can find a second eigenfunction $\varphi$  whose nodal set intersects $\pa \Omega$ at $O$, as in Remark \ref{rem:one_pt_intersection} (see diagram below). 
\vspace{2mm}
 \begin{figure}[ht]
\centering
\includegraphics[scale=0.18]{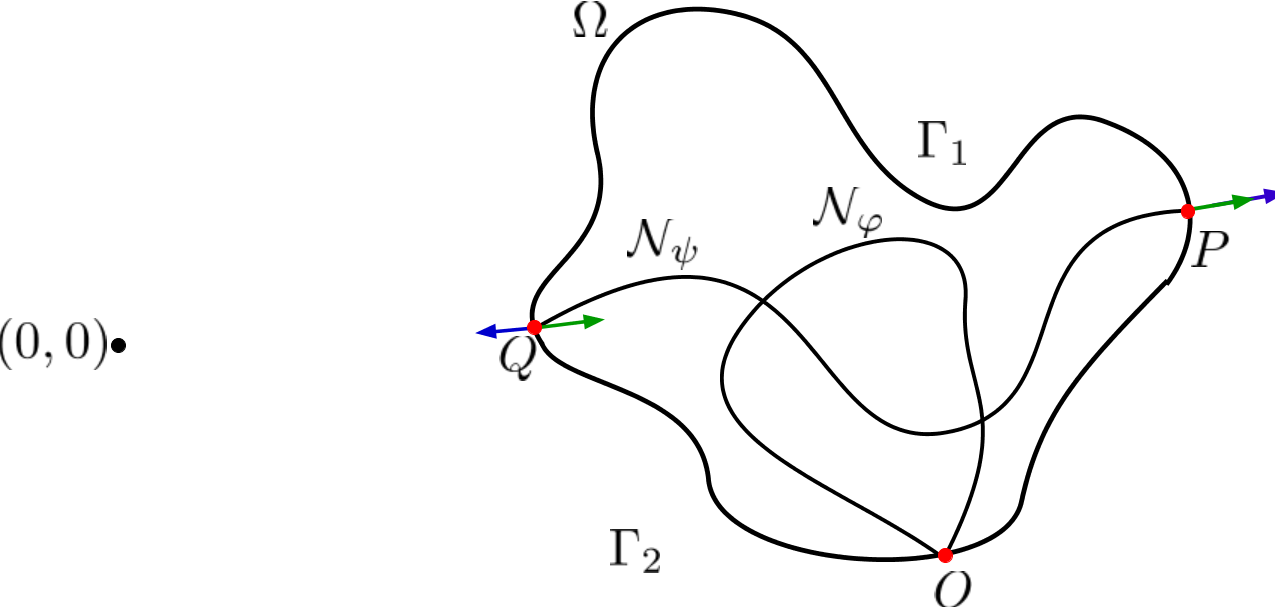}
\caption{A non-convex domain satisfying the assumptions of Theorem \ref{thm:mutliplicity_second_eigen}}
\label{fig:domain with mult. atmost 2}
\end{figure}
     
   \vspace{2mm}
  
One can check that $[\Delta, X]=0$, which gives us that 
\begin{align*}
   \int_\Omega \varphi\Delta(X\psi) - X\psi\Delta \varphi & = \int_\Omega \varphi X \Delta \psi - X\psi\Delta \varphi \\
   & = \int_\Omega -\lambda_2\varphi X\psi  - X\psi\Delta \varphi\\
   & = \int_\Omega \Delta \varphi X\psi - \Delta \varphi X\psi \\
   & = 0.
\end{align*}
Then we  have
\begin{align*} 
 0=  \int_\Omega \varphi\Delta(X\psi) - X\psi\Delta \varphi & = \int_{\pa\Omega} X\psi\;\frac{\pa\varphi}{\pa\eta} = \int_{\pa\Omega} \langle X, 
   \eta\rangle\; \frac{\pa\psi}{\pa \eta}\;\frac{\pa\varphi}{\pa\eta}. 
\end{align*}
Since $\langle X, \eta\rangle\frac{\pa\psi}{\pa \eta}$ does not change sign on $\pa\Omega$, $\frac{\pa\varphi}{\pa\eta}$ must which leads to a contradiction.
\end{proof}


\subsection{Acknowledgements} The authors are grateful to IIT Bombay for providing ideal working conditions. The research of the first author was partially supported by SEED Grant RD/0519-IRCCSH0-024. During the final stages of the preparation of the manuscript the first author was a visitor at MPIM Bonn. The second named author would like to thank the Council of Scientific and Industrial Research, India for funding which supported his research. The second named author would also like to thank Iowa State University for their support during the final stages of the preparation of the manuscript. The authors would like to acknowledge useful conversations and correspondence with Stefan Steinerberger and Antoine Henrot. The authors are also deeply grateful to Anup Biswas and Daniel Peralta-Salas for pointing out  important corrections in the first version of the paper.

\end{document}